\numberwithin{equation}{section}
\numberwithin{equation}{section}
\newtheorem{defi}{Definition}[section]
\newtheorem{theorem}[defi]{Theorem}
\newtheorem{lemma}[defi]{Lemma}
\newtheorem{corollary}[defi]{Corollary}
\newtheorem{proposition}[defi]{Proposition}
\newtheorem{remark}[defi]{Remark}
\newtheorem{remarks}[defi]{Remarks}
\newcommand{\cA}{{\mathcal A}}
\newcommand{\cD}{{\mathcal D}}
\newcommand{\cB}{{\mathcal B}}
\newcommand{\cL}{{\mathcal L}}
\newcommand{\cT}{{\mathcal T}}
\newcommand{\CC}{{\mathbb C}}
\newcommand{\BB}{{\mathbb B}}
\newcommand{\EE}{{\mathbb E}}
\newcommand{\R}{{\mathbb R}}
\newcommand{\RR}{{\mathbb R}}
\newcommand{\sA}{{\mathsf{A}}}
\newcommand{\sB}{{\mathsf{B}}}
\newcommand{\sF}{{\mathsf{F}}}
\newcommand{\sH}{{\mathsf{H}}}
\renewcommand{\epsilon}{\varepsilon}
\DeclareMathOperator{\curl}{curl}
\DeclareMathOperator{\Tr}{Tr}
\DeclareMathOperator{\tr}{tr}
\DeclareMathOperator{\diver}{div}
\DeclareMathOperator{\real}{Re}
\DeclareMathOperator{\mdiv}{div}
\newcommand{\bb}{{\bb}}
\begin{document}

\title[General Ericksen-Leslie Model]{Nematic liquid crystals: Ericksen-Leslie theory with general stress tensors}

\author{Matthias Hieber}
\address{TU Darmstadt\\
Fachbereich Mathematik\\
        Darmstadt, Germany}
\email{hieber@mathematik.tu-darmstadt.de}

\author{Jinkai Li}
\address{South China Research Center for Applied Mathematics and Interdisciplinary
Studies, School of Mathematical Sciences, South China Normal University, Guangzhou 510631, P. R. China}
\email{jklimath@m.scnu.edu.cn; jklimath@gmail.com}

\author{Mathias Wilke}
\address{Martin-Luther-Universit\"at Halle-Witten\-berg\\
         Institut f\"ur Mathematik \\
         Halle (Saale), Germany}
\email{mathias.wilke@mathematik.uni-halle.de}


\subjclass[2020]{35Q35, 76A15, 76D03}
\keywords{Ericksen-Leslie system, general Leslie stress, general Ericksen stress, anisotropic elasticity, fully nonlinear boundary conditions}

\begin{abstract}
The Ericksen-Leslie model for nematic liquid crystal flows in case of an isothermal and incompressible fluid with general Leslie stress and anisotropic elasticity, i.e. with general  Ericksen stress tensor, is shown for the first time to be strongly well-posed.
Of central importance is a fully nonlinear boundary condition for the director field, which, in this generality, is necessary to guarantee that the system fulfills physical principles.
The system is shown to be locally, strongly well-posed in the $L_p$-setting. More precisely, the existence and uniqueness of a local,  strong $L_p$-solution to the general system is proved and it is shown
that the director $d$ satisfies  $|d|_2\equiv 1$ provided this  holds for its  initial data $d_0$. In addition, the solution is shown to depend continuously on the data.

The results are proven without any structural assumptions on the Leslie coefficients and in particular without assuming Parodi's relation.
\end{abstract}

\maketitle

\section{Introduction}

In physics there are various ways of describing order parameters in liquid crystals: Doi-Onsager -, Landau-De Gennes - and Ericksen-Leslie theory. They lead to mathematical theories at various
levels. The Ericksen-Leslie model is a so-called vector model. Another type of model describing liquid crystal flows is the Q-tensor model, including the Landau-De Gennes theory.
In contrast to vector models, it uses a traceless $3 \times 3$-matrix $Q$ to describe the alignment of molecules, see e.g. \cite{BE94} or \cite{SV12}.

In this article we concentrate on the Ericksen-Leslie model with general Leslie and general Ericksen stress. In  their pioneering
works, Ericksen and Leslie \cite{Eri62,Eri66,Les66,Les68} developed during the 1960's the continuum theory of nematic
liquid crystals. This theory models nematic
liquid crystal flows from a hydrodynamical point of view  and reduces to the Oseen-Frank theory in the static case. It describes the evolution of the
complete system under the influence of the velocity $u$ of the fluid and the orientation configuration $d$ of rod-like liquid crystals, see also \cite{BZ11}.
The original derivation \cite{Eri62,Les68} is based on the conservation  laws for mass, linear and angular momentums as well as on certain very specific
constitutive relations, which nowadays are called  the {\em Leslie} and {\em Ericksen stress}. For a very thorough description and investigation  of the Ericksen-Leslie model we refer to the
monographs by Virga \cite{Vir94} and Sonnet and Virga \cite{SV12}.

Due to the complexity of these systems, certain simplified systems were investigated frequently in the past. In fact, the rigorous analysis of the Ericksen-Leslie system  began with the work of
Lin \cite{Lin91} and Lin and Liu \cite{LL95}, who introduced and studied the nowadays called simplified isothermal system, see also \cite{HNPS16}.  For  well-posedness criteria concerning various
simplifications and various assumptions on the Leslie as well as Ericksen coefficients, we refer to  \cite{CS01,FFRS12,LW16,Wan11,WZZ13,WZZ15b} as well to  the
survey articles \cite{HiPr18,WZZ21} and the references therein.

It was a long outstanding open problem to decide whether the Ericksen-Leslie system subject to general Leslie and general Ericksen stress is well-posed in the weak or strong sense. First results in
this direction are going back to Wu, Xu and Liu \cite{WXL13,WZZ13} and Lin and Wang \cite{LW16}, who proved well-posedness results for the Ericksen-Leslie system under the assumptions that
the Leslie coefficients are satisfying certain assumptions related to Parodi's relation and where the Oseen-Frank free energy $\psi$ is simplified to the energy  functional for harmonic maps, i.e.,
$I(d)=\int_\Omega \psi(d,\nabla d)=\int_\Omega |\nabla d|^2dx$ due to the isotropy assumptions in  the Oseen-Frank functional described in detail below.

In this article we give an affirmative answer to this problem for the general case with general Leslie stress and anisotropic elasticity, i.e. general Ericksen stress. For the description of the full system, see e.g. Section 4.7 of \cite{HiPr18} or Section \ref{sec:Model_and_Results} below. Denoting by $\varrho$ the density and by $\theta$ the temperature of the fluid,
the free energy $\psi$ is of the form $\psi=\psi(\varrho,\theta,d,\nabla d)$, where $d$ is the director field. In the isothermal and incompressible situation, $\psi$ is given by the classical Oseen-Frank free energy
\begin{equation*}
\psi(d,\nabla d) = k_1({\rm div}\, d)^2 +k_2|d\times(\nabla\times d)|_2^2 + k_3 |d\cdot(\nabla \times d)|^2 + (k_2+k_4)[ {\rm tr}(\nabla d)^2-({\rm div}\, d)^2],
\end{equation*}
where $k_i$ are the so-called {\em Frank coefficients}. Based on  physical principles, the four Frank coefficients $k_1,k_2,k_3,k_4$ are all different in general, however, the first three ones
should be  strictly positive.
The Frank coefficients  are often assumed to satisfy the Ericksen inequalities
$$
k_1 >0, k_2 >0, k_3 >0, k_2> |k_4|, 2k_1 > k_2 + k_4,
$$
which are known to be necessary and sufficient for the inequality $\psi(d,\nabla d) \geq c |\nabla d|^2$ for all $d$ and some constant $c>0$, see \cite{Bal17}.

The first three terms in $\psi$ defined above, describe splay, twist and bend of the director field. The fourth term $k_2+k_4$, the saddle-splay term, is a null Lagrangian meaning that
$(k_2+k_4)\int_\Omega {\rm tr}(\nabla d)^2-({\rm div}\, d)^2dx$ depends only on the values of $d$ on the boundary $\partial \Omega$. Thus if
$d_{|_{\partial\Omega}}$ is prescribed, such as for Dirichlet boundary conditions, the term can be neglected with respect to  energy considerations. However, this is \emph{not} the
case when $d_{|_{\partial\Omega}}$ is only partially prescribed, as in the situation of weak anchoring boundary conditions or for \emph{fully nonlinear} boundary conditions as in our case and described in detail below, see \eqref{fnbc}.
For the general free energy $\psi(d,\nabla d)$, the general Ericksen stress tensor becomes
$$
S_E=- \varrho \frac{\partial \psi}{\partial(\nabla d)}[\nabla d]^{\sf T}.
$$
We remark that in the \emph{isotropic} case, i.e. if $k_1=k_2=k_3=1$ and $k_4=0$, the Oseen-Frank free energy
$\psi(d,\nabla d)$ coincides with $|\nabla d|^2$, hence $\frac{\partial \psi}{\partial(\nabla d)}=2\nabla d$,
which simplifies the problem.

In \cite{HiPr17}, strong well-posedness for the Ericksen-Leslie system in the incompressible case with general Leslie but \emph{isotropic} elasticity stress, i.e. if $k_1=k_2=k_3=1$ and $k_4=0$ in the Oseen-Frank free energy $\psi$, was proved for the first time without assuming
any structural conditions on the Leslie coefficients, as e.g. Parodi's relation \cite{Par70}. It was possible to prove a result of this type, since the   approach given in  \cite{HiPr17}
is based on the theory of quasilinear evolution equations  and not on  energy estimates. It seems that the latter requires certain dissipation rates and therefore
structural conditions on the Leslie coefficients are  needed, when pursuing  an approach based on energy estimates as in \cite{WXL13}, \cite{WZZ13}. In \cite{HiPr17,HiPr18} it is only assumed
that the six Leslie coefficients are smooth functions and that the coefficient $\mu_s$ (corresponding to $\alpha_4$ in \cite{LW16}) associated with the usual Cauchy stress tensor
is strictly positive, thus guaranteeing that the resulting Laplacian has the correct sign.



In the special case of $\Omega = \R^3$, Hong, Li and Xin \cite{HLX14} and Ma, Gong and Li \cite{MGL14} obtained a well-posedness result for anisotropic elasticity, however, for completely
vanishing Leslie stress $S_L$ and without stretching terms and with the assumption that $k_2 + k_4=0$.

It is the aim of this article to investigate for the first time the Ericksen-Leslie system with general Leslie stress $S_L$ and \emph{anisotropic} elasticity, i.e. with general Ericksen stress $S_E$ (see \eqref{eq:const} \& \eqref{eq:elgenincom}), in bounded domains $\Omega \subset \R^3$ with boundary $\partial\Omega\in C^3$. We show that this system is strongly well-posed without any structural assumptions on the Leslie coefficients. For the Frank coefficients, we assume that
\begin{equation}\label{eq:cond_Frank}
k_1 >0, k_2 >0, k_3 >0 \mbox{ and for } 0 < \alpha \leq \min\{k_1,k_2,k_3\} \mbox{ let } k_4= \alpha - k_2.
\end{equation}
Moreover, we suppose that at least one of the following conditions
\begin{equation}\label{eq:cond_Frank2}
9k_3>k_1\quad\mbox{or}\quad 2|k_1-k_3|<\min\{k_2,k_3\}
\end{equation}
is satisfied.

In the two-dimensional case and if $d$ is the heat flow of harmonic maps, solutions with finite time singularities are constructed in \cite{CDY92}, which yields blow-up in the $d$-equation.
Huang, Lin, Liu and Wang \cite{HLLW15} were able to construct examples, in case of $\Omega$ being the unit ball in $\R^3$ and of initial data $u_0,d_0$ having sufficiently small energy and
$d_0$ fulfilling a topological condition in the case of Dirichlet boundary conditions  $d=(0,0,1)^{\sf T}$, for which one has finite time blow-up of $(u,d)$. For a related result in two space dimensions,
we refer to \cite{LLWW22}.

On the other hand, recalling the results given in \cite{HiPr17} and \cite{HiPr18}, we already noted  that the Ericksen-Leslie system with isotropic elasticity and general Leslie coefficients, but classical
Neumann boundary conditions for $d$ is strongly well-posed. It was also shown in \cite{HP15a} and \cite{HiPr18} that in this case the Ericksen-Leslie system subject to Neumann boundary conditions
is thermodynamically consistent, meaning that it fulfills the second law of thermodynamics. In order to find boundary conditions for $d$ in the case of general, anisotropic elasticity which respect the underlying physics,
we refer for example to \cite{HP15a}. There it was shown that in the case of general elasticity certain fully nonlinear and natural boundary conditions for $d$ are needed in order to ensure that the system is consistent with
physical principles.
More precisely, it is shown in \cite[Section 15.2]{HP15a}, that the entropy production of the Ericksen-Leslie system is nonnegative, i.e. the \emph{second law of thermodynamics} is satisfied, provided that the energy flux $\Phi_e$ of the Ericksen-Leslie system is modeled by
$$\Phi_e=q+\pi u-Su-\frac{\partial\psi}{\partial(\nabla d)^{\sf T}}\cD_t d.$$
Here $u$ means velocity, $\pi$ pressure, $S$ extra stress (cf. \eqref{eq:const}), $q$ denotes the heat flux, $\psi$ is the Oseen-Frank free energy and $\cD_t=\partial_t+u\cdot\nabla$ is the Lagrangian derivative.

At the boundary $\partial\Omega$, energy should be preserved, meaning that $(\Phi_e|\nu)=0$, where $\nu$ denotes the unit normal vector field on $\partial\Omega$. Under the assumptions $(q|\nu)=0$ and $u=0$ at $\partial\Omega$, this readily implies
\begin{equation}\label{eq:nonlinBC0}
\left(\frac{\partial\psi}{\partial(\nabla d)} \cdot \nu \Big|\cD_t d \right)=0\quad\text{on}\quad\partial\Omega.
\end{equation}
As the director $d$ has length 1, it holds that $P_d\cD_t d=\cD_t d$, where $P_d=I-d\otimes d$. Therefore, \eqref{eq:nonlinBC0} is clearly valid, provided $d$ satisfies the natural boundary condition
\begin{equation}\label{fnbc}
 P_d\frac{\partial\psi}{\partial(\nabla d)} \cdot \nu= 0\quad \mbox{ on }\partial\Omega.
\end{equation}
For this reason we employ \eqref{fnbc} throughout this paper. Let us emphasize that this type of boundary condition has already been investigated in detail in the book of E. Virga, see (3.116) in \cite{Vir94}, using variational techniques and the Euler-Lagrange formalism. In \cite{Vir94}, this type of boundary condition is called \emph{no anchoring condition} for $d$. Moreover, citing \cite[page 132]{Vir94}, "it should be noted that boundary conditions are often responsible for the appearance of defects in liquid crystals". It is very interesting to see, that the \emph{no anchoring boundary condition} obtained by variational principles in \cite{Vir94}, coincides with the boundary condition \eqref{fnbc}, obtained by using the entropy principle and the principle of thermodynamical consistency.

Let us also note that we are investigating here the boundary condition \eqref{fnbc} for the first time with respect to well-posedness of the Ericksen-Leslie system.

Observe that, in general, \eqref{fnbc} is a  {\em fully nonlinear} boundary condition for $d$, see Section \ref{sec:BC_Er_Op} for details. On the contrary, in the isotropic case $k_1=k_2=k_3=1$, $k_4=0$, it holds that $\frac{\partial \psi}{\partial(\nabla d)}=2\nabla d$, hence in this case, pure Neumann boundary conditions for $d$ are natural. For a detailed discussion of other possible boundary conditions for $d$, we refer the reader e.g. to \cite[Section 3.5]{Vir94}.

From a mathematical point of view it is very satisfactory to see  that the natural (nonlinear) boundary condition \eqref{fnbc}, motivated originally  by  physical principles, yields the existence and uniqueness of strong solutions to the Ericksen-Leslie system subject to general Leslie stress and anisotropic elasticity, i.e., with general Ericksen stress, see Theorem \ref{thm:LWP2} for details.

There are several  major  difficulties arising in the investigation of the general Ericksen-Leslie system.
In a first key step, we will show that the associated Ericksen operator is normally elliptic in $\R^3$ under the assumption that the Frank
coefficients $k_1,\ldots,k_4$ satisfy the above condition \eqref{eq:cond_Frank}-\eqref{eq:cond_Frank2} but no other assumptions. Secondly, considering the situation of bounded domains, by physical principles, we are naturally  lead to the
above fully nonlinear boundary condition \eqref{fnbc} for $d$, which, however,  is analytically delicate. We master these difficulties by showing first that the linearized system satisfies
the Lopatinskii-Shapiro condition and thus that the general Ericksen-Leslie system constitutes a normally elliptic boundary value problem in the sense of \cite[Section 6]{PrSi16}.
On the technical side, we note that our proofs are using a broad range of  methods ranging from  operator-valued Fourier multipliers to Schur complements.


Our approach, based on modern quasilinear theory, yields strong local-in-time well-posedness of the general Ericksen-Leslie system.
Of course, $d$ satisfies $|d(t,x)|_2=1$ for all $t \in [0,T]$, for some $T>0$ and all
$x \in \Omega$, provided $\Omega$ is bounded. The approach also yields that the solution depends continuously on the data.

This article is organized as follows. In Section 2 we start with  a precise description of the Ericksen-Leslie model including the involved stress tensors as well as its natural boundary conditions
motivated by the second law of thermodynamics and we close Section 2 by stating the two main results of this article. Section 3 is devoted to the computation of the
Ericksen operator, based on the general Oseen-Frank free energy functional, while in Section 4, we introduce the functional analytic setting for our approach. In Section 5 we show that
the principal part of the linearized Ericksen operator subject to the principal part of the linearization of the boundary operator is strongly elliptic and satisfies the Lopatinskii-Shapiro condition, thus allowing an approach based on modern quasilinear theory. The results obtained in Section 5 are crucial for proving maximal regularity results in Section 6. Finally, in Section 7, we prove our two main results.

To formulate our main result in the next section, we define time weighted spaces $L_{p,\mu}(J;E)$ for a UMD-space $E$, $J = (0,T)$, $0 < T \le \infty$, $p \in (1,\infty)$, $\mu \in (\frac{1}{p},1]$ as
$$L_{p,\mu}(J;E) := \{u \colon J \to E \mid [t\mapsto t^{1-\mu} u(t)] \in L_p(J;E)\}$$
equipped with their natural  norms
$\|u \|_{L_{p,\mu}(J;E)} := \| [t\mapsto t^{1-\mu} u(t)] \big \|_{L_p(J;E)}$.
For $k \in \mathbb{N}_0$, the associated weighted Sobolev spaces are defined by
$$W_{p,\mu}^{k}(J;E) = H_{p,\mu}^{k}(J;E) := \{u \in W_{1,\mathrm{loc}}^{k}(J;E) \mid u^{(j)} \in L_{p,\mu}(J;E), \enspace j
\in \{0,\dots,k\}\}$$
and these spaces are equipped with their natural  norms.
For $s\in (0,1)$, the weighted Sobolev-Slobodeckii spaces $W_{p,\mu}^{s}(J;E)$ are defined as
$$W_{p,\mu}^{s}(J;E)=\{u\in L_{p,\mu}(J;E):\|u\|_{W_{p,\mu}^{s}(J;E)}<\infty\},$$
where
$$\|u\|_{W_{p,\mu}^{s}(J;E)}:=\|u\|_{L_{p,\mu}(J;E)}+[u]_{W_{p,\mu}^{s}(J;E)},$$
and
$$[u]_{W_{p,\mu}^{s}(J;E)}:=\left(\int_0^T\int_0^t\tau^{p(1-\mu)}\frac{\|u(t)-u(\tau)\|_E^p}{(t-\tau)^{sp+1}}d\tau dt\right)^{1/p},$$
see \cite[Formula (1.5)]{DuShaSi24}.
Furthermore, we
set $W_{p,\sigma}^{s}(\Omega;\RR^3):=W_{p}^{s}(\Omega;\RR^3)\cap L_{p,\sigma}(\Omega;\RR^3)$ for any $s>0$, where $L_{p,\sigma}(\Omega;\RR^3)$ denotes the space of solenoidal $L_p$-functions on $\Omega$ and $W_{p}^{s}(\Omega;\RR^3)$ is a classical Sobolev-Slobodeckii space, see e.g. \cite{Tri78,Tri83}.

\section{The general Ericksen-Leslie model and Main Results}\label{sec:Model_and_Results}

In their pioneering articles, Ericksen \cite{Eri62} and Leslie \cite{Les68}  developed a  continuum theory for the flow of  nematic liquid crystals based on the conservation  laws for
mass, linear and angular momentums as well as on certain constitutive relations.

The general incompressible Ericksen-Leslie model in the isothermal case reads as
\begin{align}\label{eq:el}
\left\{
\begin{array}{rlll}
 \partial_t u + (u \cdot \nabla) u  &\!=\!& \mdiv \sigma & \text{in } (0,T) \times \Omega,  \\
 \mdiv u &\!=\!& 0 & \text{in } (0,T) \times \Omega, \\
 d \times \left(g + \mdiv\left(\frac{\partial \psi }{\partial (\nabla d)}\right) - \nabla_d \psi  \right)&\!=\!& 0 & \text{in } (0,T) \times \Omega,\\
 |d|_2 &\!=\!& 1 & \text{in } (0,T) \times \Omega, \\
 (u,d)_{\vert t=0} &\!=\!& (u_0,d_0) & \text{in }   \Omega.
 \end{array}\right.
\end{align}
Here, $u$ denotes the velocity of the fluid, $d$ the so-called director, and $\sigma$ the stress tensor, given by
$$
\sigma := -   \pi I - \frac{\partial\psi}{\partial(\nabla d)} \nabla d + \sigma_L,
$$
where $\pi$ is the fluid pressure and
\begin{equation}\label{def:sigmal}
\sigma_L := \alpha_1 (d^TDd)d\otimes d +\alpha_2N \otimes d + \alpha_3 d\otimes N + \alpha_4 D + \alpha_5(Dd)\otimes d + \alpha_6 d \otimes (Dd)
\end{equation}
denotes the Leslie stress. Here  $\alpha_i$, $i=1,\ldots,6$ denote the Leslie viscosities,  $D = \frac{1}{2}([\nabla u]^{\sf T}+ \nabla u)$ the deformation tensor,
$$
N:=N(u,d):=\partial_td + (u \cdot \nabla)d - Vd,
$$
with $V=\frac{1}{2}(\nabla u -[\nabla u]^{\sf T})$ the vorticity tensor and $(a\otimes b)_{ij}:=a_ib_j$ for $1\leq i,j \leq 3$.
The kinematic transport of $d$ is denoted by $g$ and is given by
\begin{align}\label{def:g}
g := g(u,d) := \lambda_1 N + \lambda_2 Dd,
\end{align}
where $\lambda_1,\lambda_2 \in \R$. Moreover, the free energy $\psi$ is given by the classical Oseen-Frank free energy defined by
\begin{equation}\label{def:psi}
\psi(d,\nabla d) = k_1({\rm div}\, d)^2 +k_2|d\times(\nabla\times d)|_2^2 + k_3 |d\cdot(\nabla \times d)|^2 + (k_2+k_4)[ {\rm tr}(\nabla d)^2-({\rm div}\, d)^2],
\end{equation}
where $k_i$ are the so-called {\em Frank coefficients}.
The system has to be completed by suitable initial and boundary conditions.

Following arguments from thermodynamics and employing the entropy principle, the above Ericksen-Leslie model \eqref{eq:el} was extended in \cite{HP15a}  to the non-isothermal situation and to the case of
compressible fluids  in a thermodynamically consistent way. Let us emphasize that these extended  models {\em contain the classical Ericksen-Leslie model in its general form as a special case}.

Given a bounded domain $\Omega \subset \R^n$, $n \geq 2$, with smooth boundary,  the general Ericksen-Leslie model in the  non-isothermal situation derived as in
\cite{HP15a, HiPr18},  reads as
\begin{align}\label{eq:elgeneral}
\left\{
\begin{array}{rll}
\partial_t \rho +{\rm div}(\rho u)&=0\quad &\mbox{in } \Omega,\\
\rho\cD_tu +\nabla \pi &= {\rm div}\, S \quad &\mbox{in } \Omega,\\
\rho\cD_t\epsilon +{\rm div}\, q &= S:\nabla u -\pi{\rm div}\, u+{\rm div}(\rho\partial_{\nabla d}\psi\cD_td)\quad &\mbox{in } \Omega,\\
  \gamma \cD_td-\mu_V Vd&= P_d\big( {\rm div}(\rho\frac{\partial\psi}{\partial(\nabla d)})-\rho\nabla_d\psi\big)+ \mu_D P_d Dd \quad &\mbox{in } \Omega,\\
  u=0,\quad q\cdot \nu&=0 \quad  &\mbox{on } \partial\Omega,\\
\rho(0)=\rho_0,\quad u(0)=u_0,\quad \theta(0)&=\theta_0,\quad d(0)=d_0\quad &\mbox{in } \Omega.
\end{array}\right.
\end{align}
Here the unknown variables $\rho, u,\pi$ denote the density, velocity and pressure of the fluid, respectively, $\epsilon$ the
internal energy and $d$ the so called director, which must have modulus 1.  Moreover, $q$ denotes the heat flux,  $\cD_t=\partial_t +u\cdot\nabla$ the Lagrangian derivative and
$P_d$ is defined as $P_d=I-d\otimes d$.
These equations have to be supplemented by the thermodynamical laws
\begin{align}\label{eq:thermo}
\epsilon = \psi +\theta \eta, \quad \eta = -\partial_\theta \psi, \quad \kappa = \partial_\theta \epsilon,\quad \pi =\rho^2 \partial_\rho\psi,
\end{align}
and by the constitutive laws
\begin{align}\label{eq:const}
\left\{
\begin{array}{lll}
S &= S_N + S_E + S_L, \\ 
S_N&= 2\mu_s D + \mu_b {\rm div}\, u \, I, \\
S_E&=  -\rho \frac{\partial \psi}{\partial\nabla d}[\nabla d]^{\sf T},\\
S_L&= S_L^{stretch} + S_L^{diss}, \\
 S_L^{stretch}& = \frac{\mu_D+\mu_V}{2\gamma}{\sf n} \otimes d + \frac{\mu_D-\mu_V}{2\gamma} d \otimes {\sf n},\quad
 {\sf n}= \mu_V Vd +\mu_D P_dDd-\gamma\cD_td,\\
S_L^{diss}& =\frac{\mu_P}{\gamma} ({\sf n}\otimes d + d\otimes {\sf n}) + \frac{\gamma\mu_L+\mu_P^2}{2\gamma}(P_dDd\otimes d + d\otimes P_dDd) +\mu_0 (Dd|d)d\otimes d,\\
\end{array}\right
.\end{align}
and
\begin{equation}\label{eq:q1}
q = -\tilde\alpha_0 \nabla\theta -\tilde\alpha_1(d|\nabla\theta)d.
\end{equation}
Here all coefficients $\mu_j,\tilde\alpha_j$ and $\gamma$ are functions of $\rho,\theta,d,\nabla d$.
For thermodynamical consistency the following conditions are required
\begin{align}\label{eq:pos1}
\mu_s\geq0,\quad 2\mu_s+n\mu_b\geq0,\quad \tilde\alpha_0\geq0,\quad \tilde\alpha_0+\tilde\alpha_1\geq0,\quad \mu_0,\mu_L\geq0,\quad \gamma>0.
\end{align}
We also recall from \eqref{fnbc} that the natural boundary condition at $\partial\Omega$ for $d$ becomes
\begin{equation}\label{eq:nonlinbc}
 P_d\frac{\partial \psi}{\partial(\nabla d)} \cdot \nu = 0\quad \mbox{ on }\partial\Omega,
\end{equation}
which, in general, is {\em fully nonlinear}.

In the case of {\em isotropic elasticity} with constant density and constant temperature one has $k_1=k_2=k_3=1$ and $k_4=0$ and so the Oseen-Frank energy reduces to the Dirichlet energy, i.e.
$$
\psi(d,\nabla d)= |\nabla d|^2,
$$
and  thus
$\mdiv ( \frac{\partial \psi}{\partial( \nabla d)}) = 2\Delta d$.
Then the Ericksen stress tensor simplifies to
\begin{equation}\label{eq:ericksenstress}
S_E=  -\lambda \nabla d[\nabla d]^{\sf T},
\end{equation}
where $\lambda = \rho\partial_\tau\psi$, $\tau=\frac{1}{2}|\nabla d|^2$,
and the natural boundary condition at $\partial\Omega$ for $d$ becomes the Neumann boundary condition $\partial_\nu d = 0$ on $\partial\Omega$.

We emphasize that in the case $\mu_V=\gamma$, our parameters $\mu_s,\mu_0,\mu_V,\mu_D,\mu_P,\mu_L$ are in {\em one-to-one correspondence} to the celebrated
Leslie parameters $\alpha_1,\ldots,\alpha_6$ given in the Leslie stress $\sigma_L$ defined  as in \eqref{def:sigmal},
where $D,N$ and $V$ are defined as above.  This shows that our model \eqref{eq:elgeneral}-\eqref{eq:const} contains the classical isothermic and incompressible
Ericksen-Leslie model as a particular case.

It is the aim of this article to investigate the Ericksen-Leslie system with general Leslie stress and anisotropic elasticity, i.e. with general Ericksen stress tensor, analytically, in the isothermal situation and
for incompressible fluids.

The system then  reads as
\begin{align}\label{eq:elgenincom}
\left\{
\begin{array}{rll}
\rho\cD_tu +\nabla \pi &= {\rm div}\, S \quad &\mbox{in } \Omega,\\
\mdiv u  &= 0 \quad &\mbox{in } \Omega,\\
  \gamma \cD_td-\mu_V Vd&= P_d\big( {\rm div}(\rho\frac{\partial\psi}{\partial(\nabla d)})-\rho\nabla_d\psi\big)+ \mu_D P_d Dd \quad &\mbox{in } \Omega,\\
P_d \frac{\partial \psi}{\partial(\nabla d)} \cdot \nu &=0 & \mbox{on } \partial\Omega,\\
u&=0  &\mbox{on } \partial\Omega,\\
\end{array}\right.
\end{align}
subject to initial data $(u(0),d(0)) =(u_0,d_0)$. Here $\cD_t$, $P_d$ are defined as above and $S$, $\psi$ are defined as in \eqref{eq:const} \& \eqref{def:psi}.
We show in Section \ref{sec:BC_Er_Op} that the  fully nonlinear boundary condition for $d$ reads as
\begin{align*}
P_d\frac{\partial{\psi}}{\partial(\nabla d)} \cdot \nu &=2k_3\nabla d\cdot \nu+2P_d(k_1\diver d\cdot I-k_3(\nabla d)^{\sf T})\cdot \nu
+2(k_2-k_3)(d\cdot \curl d)(d\times\nu)\\
&\quad +2(k_2+k_4)P_d((\nabla d)^{\sf T}-\diver d\cdot I)\cdot \nu.
\end{align*}
We note furthermore, that all coefficients {$\mu_j,\gamma$} are functions of {$d,\nabla d$}.

Let us state our assumptions on the coefficients in the Ericksen-Leslie model with general Leslie and Ericksen stress tensor.

\begin{itemize}
  \item[(R)] The Leslie coefficients $\mu_j$ for $j\in\{s,V,D,P,L,0\}$ and the parameter $\gamma$ may depend on $d,\nabla d$ and are assumed to be smooth,
  \item[(P)] It holds that $\mu_s>0$, $\gamma>0$ and $\mu_j\ge 0$ for $j\in\{0,L\}$,
  \item[(F)] The Frank coefficients satisfy $k_j>0$ for $j\in\{1,2,3\}$ and for $0<\alpha\le\min\{k_1,k_2,k_3\}$ let $k_4=\alpha-k_2$. Furthermore, let at least one of the conditions
    \begin{itemize}
      \item[$\bullet$] $9k_3>k_1$,
      \item[$\bullet$] $2|k_1-k_3|<\min\{k_2,k_3\}$,
    \end{itemize}
     be satisfied.
\item[(B)] The initial data $d_0$ satisfies the compatibility condition $\cB_{d_0}(\nabla)d_0 =0$ on $\partial\Omega$,  where
\begin{align*}
\frac{1}{2}\cB_{d_0}(\nabla)d_0 &:= k_3\nabla d_0\cdot \nu+P_{{d}_0}(k_1\diver d_0\cdot I-k_3(\nabla d_0)^{\sf T})\cdot \nu\\
& \quad +(k_2-k_3)({d}_0\cdot \curl d_0)({d}_0\times\nu)
+(k_2+k_4)P_{{d}_0}((\nabla d_0)^{\sf T}-\diver d_0\cdot I)\cdot \nu.
\end{align*}
 \end{itemize}
In particular, note that we do {\em not} assume Parodi's relations. We note furthermore, that assuming  condition (F), the Oseen-Frank functional $\psi$ is positive, i.e. we have
$\psi(d,\nabla d)\ge 0$ for all $d\in \RR^3$ with $|d|_2=1$.

\begin{theorem}\label{thm:LWP2}
Let $1<p<\infty$, $\mu \in (\frac12 + \frac{5}{2p},1]$, $\Omega\subset\R^3$ be a bounded domain with boundary $\partial\Omega\in C^3$, and assume that  the  assumptions (R), (P) and (F) are  satisfied.
Then, given
$$(u_0,d_0)\in W_{p,\sigma}^{2\mu-2/p}(\Omega;\RR^3) \times W_{p}^{2\mu+1-2/p}(\Omega;\RR^3),$$
satisfying  $|d_0(x)|_2= 1$ for all $x\in\Omega$, the compatibility conditions (B) as well as  $u_0=0$ on $\partial\Omega$,
there exists $T=T(u_0,d_0)>0$ such that problem \eqref{eq:elgenincom} has a unique, strong solution
\begin{align*}
  u&\in \EE_{1,\mu}^u(0,T):= H_{p,\mu}^1((0,T);L_p(\Omega;\RR^3))\cap L_{p,\mu}((0,T);H_p^2(\Omega;\RR^3)), \\
 d&\in \EE_{1,\mu}^d(0,T):=H_{p,\mu}^1((0,T);H_p^1(\Omega;\RR^3))\cap L_{p,\mu}((0,T);H_p^3(\Omega;\RR^3))\\
    \nabla\pi&\in L_{p,\mu}((0,T);L_p(\Omega;\RR^3)).
  \end{align*}
Moreover, $d$ satisfies $|d(t,x)|_2=1$ for all $(t,x)\in [0,T]\times\Omega$ and the solution can be extended to a maximal solution with a maximal  interval of existence $[0,T_+(u_0,d_0))$.
\end{theorem}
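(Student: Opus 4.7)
The plan is to cast \eqref{eq:elgenincom} as an abstract quasilinear parabolic evolution equation and then apply the maximal $L_p$-regularity machinery of \cite{PrSi16}. First I would eliminate the pressure by applying the Helmholtz projection $\mathbb{P}$ to the momentum equation: since the no-slip condition $u|_{\partial\Omega}=0$ is decoupled from $d$, this yields a reduced system for $z=(u,d)$ whose principal part is the Stokes operator coupled with the Ericksen operator, while $\nabla\pi$ is reconstructed afterwards as a bounded linear functional of the nonlinearity, producing the asserted regularity $\nabla\pi\in L_{p,\mu}(0,T;L_p(\Omega;\RR^3))$.

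Next, freezing the coefficients at $(u_0,d_0)$ produces a linear inhomogeneous boundary value problem of the form $\partial_t z+A_0 z=F(z)$, $B_0 z=G(z)$ on $\partial\Omega$, $z(0)=z_0$, where $B_0$ contains both the no-slip condition for $u$ and the linearization of the fully nonlinear boundary condition \eqref{fnbc} at $d_0$. By Sections 3--5 (as outlined in the introduction), the principal symbol of $A_0$ is normally elliptic on $\RR^3$ under hypothesis (F), and the pair $(A_0,B_0)$ satisfies the Lopatinskii--Shapiro condition under (F), (P). Section 6 then provides maximal $L_{p,\mu}$-regularity, i.e.\ an isomorphism between the solution space $\EE_{1,\mu}^u(0,T)\times \EE_{1,\mu}^d(0,T)$ and the space of admissible data $(F,G,z_0)$, where $z_0\in W_{p,\sigma}^{2\mu-2/p}(\Omega;\RR^3)\times W_p^{2\mu+1-2/p}(\Omega;\RR^3)$ subject to the compatibility condition (B).

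Armed with maximal regularity, I would recast the full nonlinear problem as a fixed-point equation in a small ball around a reference function $z^*$ with $z^*(0)=z_0$ satisfying the inhomogeneous boundary data at $t=0$. The weight restriction $\mu>\frac12+\frac{5}{2p}$ is precisely what forces the embeddings $W_p^{2\mu+1-2/p}(\Omega)\hookrightarrow C^2(\overline{\Omega})$ and $W_p^{2\mu-2/p}(\Omega)\hookrightarrow C^1(\overline{\Omega})$, so that the smooth coefficients $\mu_j(d,\nabla d)$, $\gamma(d,\nabla d)$, the quasilinear Ericksen term and the nonlinear boundary operator are all Lipschitz on small balls in the solution space. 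A standard contraction argument in $\EE_{1,\mu}^u(0,T)\times \EE_{1,\mu}^d(0,T)$ then produces a unique local solution on $[0,T]$ together with continuous dependence on the data, and the continuation argument in \cite{PrSi16} yields the maximal interval $[0,T_+(u_0,d_0))$. The pointwise constraint $|d|_2\equiv 1$ is obtained by testing the director equation with $d$: antisymmetry of $V$ gives $Vd\cdot d=0$, the factor $P_d$ annihilates the entire right-hand side, and $P_dDd\cdot d=0$ kills the stretching term, so $|d|_2^2-1$ satisfies the transport equation $\cD_t(|d|_2^2-1)=0$ with vanishing initial datum, hence $|d|_2\equiv 1$.

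The main obstacle is the handling of the fully nonlinear boundary condition \eqref{fnbc} under the mild Frank-coefficient hypothesis \eqref{eq:cond_Frank2}: all four Oseen--Frank terms contribute non-trivially to the boundary symbol, the projector $P_{d_0}$ couples tangential and normal derivatives of $d$, and the compatibility condition (B) must be matched at $t=0$. As signaled in the introduction, this is to be addressed by a Schur-complement reduction that splits the director perturbation into its $T_{d_0}\mathbb{S}^2$-component (stabilized by the constraint) and its normal component, combined with operator-valued Fourier multiplier estimates on the half-space. Once the Lopatinskii--Shapiro condition is verified and maximal regularity of the linearization is established, the remainder of the argument proceeds by the quasilinear fixed-point scheme sketched above.
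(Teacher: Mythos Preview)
Your overall strategy---Helmholtz projection to remove the pressure, maximal regularity of the linearization via strong ellipticity and the Lopatinskii--Shapiro condition, and a contraction argument in $\mathbb{E}_{1,\mu}$---matches the paper's route and is correct in outline.

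The genuine gap is in your argument for $|d|_2\equiv 1$. You assert that testing the director equation with $d$ yields the pure transport equation $\cD_t(|d|_2^2-1)=0$ because ``the factor $P_d$ annihilates the entire right-hand side'' and ``$P_dDd\cdot d=0$''. This is circular: for any vector $v$ one has $(P_d v\,|\,d)=(v\,|\,P_d d)=-\varphi\,(v\,|\,d)$ with $\varphi:=|d|_2^2-1$, so these inner products vanish \emph{only if} $|d|_2=1$ already holds. More importantly, the abstract quasilinear system that the fixed-point scheme actually solves is obtained by expanding $P_d\Delta d=\Delta d+|\nabla d|_2^2 d$ under the hypothesis $|d|_2=1$; its principal part therefore contains the \emph{un-projected} Laplacian $2k_3\Delta d$ (this is exactly what makes $-\cA_d(\nabla)$ strongly elliptic and allows the maximal regularity machinery to apply). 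Testing \emph{that} equation with $d$ produces not a transport equation but the linear parabolic problem
\[
\gamma\,\partial_t\varphi=2k_3\Delta\varphi+G_1\varphi\ \text{in }\Omega,\qquad k_3\,\partial_\nu\varphi=G_0\varphi\ \text{on }\partial\Omega,\qquad \varphi(0)=0,
\]
where $G_0,G_1$ are zero-order in $\varphi$ with coefficients built from $u,d,\nabla d$. The Robin boundary condition arises because the leading term $2k_3\nabla d\cdot\nu$ in $\cB_d(\nabla)d$ is likewise un-projected, so testing the boundary condition with $d$ yields $k_3\partial_\nu\varphi$ plus terms proportional to $\varphi$. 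The conclusion $\varphi\equiv 0$ then requires maximal $L_p$-regularity of the Neumann Laplacian combined with a smallness/absorption argument on a short time interval and a continuation step---not merely integration along characteristics.
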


\begin{remarks}\label{rem:ThmLWP2}\mbox{}{\rm

\begin{itemize}
\item[a)]
Note that in the case of isotropic elasticity, i.e., if $k_1=k_2=k_3$ and $k_4=0$, the nonlinear boundary condition \eqref{eq:nonlinbc} reduces to the classical linear Neumann boundary condition $\partial_\nu d= 0$ on
  $\partial\Omega$.
\item [b)]
We emphasize that the boundary condition \eqref{eq:nonlinbc} is not a mathematical construction but occurs as a necessary condition for proving that the non-isothermal and compressible Ericksen-Leslie
system is thermodynamically consistent meaning  that the associated entropy production rate is positive and thus satisfies the second law of thermodynamics only if the boundary condition
\eqref{eq:nonlinbc} is prescribed. For details we refer to \cite{HP15a} and \cite{HiPr18}.
\end{itemize}
}
\end{remarks}

We also prove that the solution of \eqref{eq:elgenincom}  depends continuously on the initial data $(u_0,d_0)$. To formulate  the result precisely, let us define the sets
$$
X_{\gamma,\mu}:= \{ u \in W_{p,\sigma}^{2\mu-2/p}(\Omega;\RR^3): u=0 \mbox{ on } \partial\Omega\} \times W_{p}^{2\mu+1-2/p}(\Omega;\RR^3) \mbox{ and }
$$
and
$$
\mathcal{M}:=\{(u,d)\in X_{\gamma,\mu}: |d|_2= 1 \mbox{ in } \Omega \mbox{ and } \cB_{d}(\nabla)d=0 \mbox{ on } \partial\Omega \},
$$
where $\cB_d(\nabla)d$ is defined as in (B) but with $d_0$ being replaced by $d$.
We also set $\EE_{1,\mu}(0,T) = \EE_{1,\mu}^u(0,T) \times \EE_{1,\mu}^d(0,T)$.
\goodbreak
\begin{theorem}\label{thm:contdep}
Let $1<p<\infty$, $\mu \in (\frac12 + \frac{5}{2p},1]$, $\Omega\subset\R^3$ be a bounded domain with boundary $\partial\Omega\in C^3$, and assume the  assumptions (R), (P) and (F).  For $(u_0,d_0)\in\mathcal{M}$ consider the unique solution of \eqref{eq:elgenincom}
with   maximal time of existence  $T_+(u_0,d_0)>0$ given by Theorem \ref{thm:LWP2}.

Then, for any $T\in (0,T_+(u_0,d_0))$, there exists $r>0$ such that for each $(\tilde{u}_0,\tilde{d}_0)\in \mathcal{M}\cap \BB_{X_{\gamma,\mu}}((u_0,d_0),r)$, the unique solution
$
(\tilde{u},\tilde{d},\tilde\pi)=\left(\tilde{u}(\cdot,(\tilde{u}_0,\tilde{d}_0)),\tilde{d}(\cdot,(\tilde{u}_0,\tilde{d}_0)),\tilde{\pi}(\cdot,(\tilde{u}_0,\tilde{d}_0))\right)
$
of \eqref{eq:elgenincom} with initial value $(\tilde{u}_0,\tilde{d}_0)$ satisfies
$$
(\tilde{u},\tilde{d},\tilde\pi)\in \mathbb{E}_{1,\mu}(0,T)\times L_{p,\mu}((0,T);\dot{H}_p^1(\Omega))
$$
and the mapping
\begin{align*}
  \mathcal{M}\cap \BB_{X_{\gamma,\mu}}((u_0,d_0),r) &\to \mathbb{E}_{1,\mu}(0,T)\times L_{p,\mu}((0,T);\dot{H}_p^1(\Omega)), \\
  (\tilde{u}_0,\tilde{d}_0)&\mapsto \left(\tilde{u}(\cdot,(\tilde{u}_0,\tilde{d}_0)),\tilde{d}(\cdot,(\tilde{u}_0,\tilde{d}_0)),\tilde{\pi}(\cdot,(\tilde{u}_0,\tilde{d}_0))\right)
\end{align*}
is continuous.
\end{theorem}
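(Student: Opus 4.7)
The plan is to derive Theorem \ref{thm:contdep} from the quasilinear maximal regularity framework that underlies Theorem \ref{thm:LWP2}, combined with an implicit function theorem argument on the nonlinear submanifold $\mathcal{M}$. Fix $(u_0,d_0)\in\mathcal{M}$, its unique maximal solution $(u,d,\pi)$ on $[0,T_+(u_0,d_0))$, and $T\in(0,T_+(u_0,d_0))$. Choosing $T'\in(T,T_+(u_0,d_0))$, the reference solution satisfies $(u,d)\in\EE_{1,\mu}(0,T')$ together with $\nabla\pi\in L_{p,\mu}((0,T');L_p(\Omega))$, so it is bounded in the relevant norms on $[0,T]$ and $|d(t,x)|_2\equiv 1$ there.

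First I would rewrite \eqref{eq:elgenincom} as an abstract quasilinear problem of the form $\mathbb{L}(u,d)(u,d,\pi)=\mathbb{N}(u,d)$ subject to the fully nonlinear boundary condition $\cB_d(\nabla)d=0$ and the initial condition $(u,d)|_{t=0}=(\tilde u_0,\tilde d_0)$, where $\mathbb{L}$ is the linear principal part and $\mathbb{N}$ collects all lower-order and genuinely nonlinear contributions including the pressure gradient coupling. Freezing the coefficients at the reference trajectory $(u,d)$, the normal ellipticity and Lopatinskii-Shapiro results of Section 5, combined with the maximal regularity theorem of Section 6, show that the associated inhomogeneous linear problem defines a topological isomorphism between the solution space $\EE_{1,\mu}(0,T)\times L_{p,\mu}((0,T);\dot H_p^1(\Omega))$ and the product of the data space and the correct initial trace space. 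The linearization of the nonlinear boundary operator $\cB_d(\nabla)d$ around the reference director must be identified explicitly and its tangential compatibility trace at $t=0$ must be matched with the constraint defining $\mathcal{M}$.

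Next I would set up a fixed-point argument for the perturbation $w:=(\tilde u-u,\tilde d-d,\tilde\pi-\pi)$. Inverting the linearization above produces a map $w=\mathcal{T}(w;\tilde u_0,\tilde d_0)$ where $\mathcal{T}$ composes the inverse of the linearized operator with the nonlinear residual. Smoothness of the Leslie coefficients from assumption (R), the polynomial dependence of the Ericksen stress on $(d,\nabla d)$, and the embedding results for $W_{p,\mu}^{s}$ in the time weight $\mu\in(\tfrac12+\tfrac{5}{2p},1]$ (which is chosen precisely so that $X_{\gamma,\mu}$ embeds into the multiplicatively closed range needed on $\nabla d$) guarantee that $\mathcal{T}(\,\cdot\,;\tilde u_0,\tilde d_0)$ is a uniform contraction on a small ball of $\EE_{1,\mu}(0,T)$, provided $r>0$ is chosen sufficiently small. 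The uniform contraction principle then yields a unique fixed point $w(\tilde u_0,\tilde d_0)$ depending Lipschitz-continuously on the perturbed initial datum; the constraint $|\tilde d|_2=1$ is preserved on $[0,T]$ by the same argument as in Theorem \ref{thm:LWP2}, namely by testing the $d$-equation against $\tilde d$ and using $|\tilde d_0|_2=1$.

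The hard part will be keeping the perturbation argument consistent with the nonlinear manifold structure of $\mathcal{M}$, for two distinct reasons. First, the boundary operator $\cB_d(\nabla)d$ is fully nonlinear and its principal part depends on $d$ itself, so the Lopatinskii-Shapiro condition from Section 5 has to be invoked with coefficients that are merely small perturbations of those of the reference director; propagating the contraction estimates through this boundary term is the technically most delicate piece. Second, because the compatibility condition $\cB_{\tilde d_0}(\nabla)\tilde d_0=0$ is nonlinear in $\tilde d_0$, the set $\mathcal{M}\cap\BB_{X_{\gamma,\mu}}((u_0,d_0),r)$ is an infinite-dimensional submanifold rather than an open subset of a linear space, and one must verify that the nonlinear residual evaluated on perturbed compatible data lands in the range of the linearized boundary operator. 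Once both issues are resolved, the implicit function theorem applied to $(\tilde u_0,\tilde d_0,w)\mapsto w-\mathcal{T}(w;\tilde u_0,\tilde d_0)$ produces the continuous solution map claimed in the theorem, and the continuity of the associated pressure $\tilde\pi$ follows from its explicit reconstruction via the linearized problem.
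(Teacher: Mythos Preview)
Your proposal is essentially correct and aligns with the paper's approach: both rest on maximal regularity for the linearization around the reference trajectory (from Section~6), an implicit-function/contraction argument for the perturbation, and explicit attention to the fact that the initial-data set $\mathcal{M}$ is a nonlinear submanifold because of the fully nonlinear compatibility condition $\cB_{d_0}(\nabla)d_0=0$.

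The only point worth sharpening is your handling of the manifold structure. The paper reduces the whole argument to the abstract continuous-dependence result in \cite[Theorem~5.1(b)]{DuShaSi24}, and then identifies the \emph{single} technical ingredient needed to feed that machinery: that the trace map $z\mapsto \cB(z)=\sB(z)z$ is $C^1$ from $X_{\gamma,\mu}$ to $Y_{\gamma,\mu}=W_p^{2\mu-3/p}(\partial\Omega;\RR^3)$, and that its Fr\'echet derivative $\cB'(z_0)$ admits a \emph{continuous right inverse} for every $z_0\in\mathcal{M}$. This right inverse is precisely what guarantees that $\mathcal{M}$ is locally a $C^1$-Banach manifold and that perturbed compatible data can be parameterized linearly, which is the point you describe as ``verifying that the nonlinear residual lands in the range of the linearized boundary operator.'' The existence of the right inverse follows from the strong ellipticity and Lopatinskii--Shapiro results of Propositions~\ref{prop:elliptic} and~\ref{pro:LopShap_d}, via the argument in \cite[Proposition~2.5.1]{Mey10} and \cite[Section~6.3.5(iv)]{PrSi16}. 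Making this step explicit would turn your outline into a complete proof; without it, your last paragraph correctly flags the issue but does not resolve it.
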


\section{The Ericksen operator}
In this section we compute the nonlinear operator associated to  ${\rm div}(\rho\frac{\partial\psi}{\partial(\nabla d)})$ as well as its boundary condition given in \eqref{eq:elgenincom}.

\subsection{Computation of the Ericksen-operator}\label{sec:compEop}\mbox{}

In the following, We compute the nonlinear operator associated to ${\rm div}(\rho\frac{\partial\psi}{\partial(\nabla d)})$ defined in \eqref{eq:elgenincom}, with $\psi$ from \eqref{def:psi}.
For a smooth vector field $d$, its derivative $\nabla d\in \RR^{3\times 3}$ and rotation $\curl d\in\RR^3$ are defined by
$\nabla d=(\partial_j d_i)_{i,j}$ and
$$
\curl d=(\partial_2 d_3-\partial_3 d_2, \partial_3 d_1-\partial_1 d_3, \partial_1 d_2-\partial_2 d_1)^{\sf T}.$$
It follows that
$$\Tr (\nabla d)^2=\sum_{i,k=1}^3\partial_k d_i\,\partial_i d_k,$$
where $\Tr A=\sum_{i=1}^3a_{ii}$ is the trace of a Matrix $A=(a_{ij})\in\RR^{3\times 3}$. This readily implies
$$\frac{\partial \Tr(\nabla d)^2}{\partial(\nabla d)}=2(\nabla d)^{\sf T}.$$
Furthermore, we have
$$\frac{\partial(\diver d)^2}{\partial(\nabla d)}=2\diver d\cdot \frac{\partial\diver d}{\partial(\nabla d)}=2\diver d\cdot I_{3\times 3}.$$
For a differentiable matrix $A=(a_{ij})\in\RR^{3\times 3}$, its divergence $\diver A\in\mathbb{R}^3$ is defined by
$\diver A=\left(\sum_{j=1}^3\partial_j a_{ij}\right)_{i=1,2,3}$.
We note that this readily implies
\begin{align*}
  \diver\left(\frac{\partial \Tr(\nabla d)^2}{\partial(\nabla d)}-\frac{\partial(\diver d)^2}{\partial(\nabla d)}\right)&=2\diver (\nabla d)^{\sf T}-2\diver\left(\diver d\cdot I_{3\times 3}\right)
  =2\nabla \diver d-2\nabla \diver d=0,
\end{align*}
hence
$$\diver \left(\frac{\partial\psi}{\partial(\nabla d)}\right)=\diver\left(\frac{\partial\tilde{\psi}}{\partial(\nabla d)}\right),$$
where
$$\tilde{\psi}(d,\nabla d)=k_1(\diver d)^2+k_2(d\cdot\curl d)^2+k_3|d\times \curl d|^2.$$
For convenience, we will rewrite the expression for $\tilde{\psi}$ as follows:
\begin{align*}
\tilde{\psi}(d,\nabla d)&=k_1(\diver d)^2+k_2(d\cdot\curl d)^2+k_3|d\times \curl d|^2\\
&=k_1(\diver d)^2+k_3|\curl d|_2^2+(k_2-k_3)(d\cdot\curl d)^2.
\end{align*}
Here we have used the property $|d|_2=1$ and the fact
$|a\cdot b|^2+|a\times b|_2^2=|a|_2^2\cdot |b|_2^2$ for all $a,b\in\RR^3$.

Since
$ |\curl d|_2^2=(\partial_2 d_3-\partial_3 d_2)^2+(\partial_3 d_1-\partial_1 d_3)^2+(\partial_1 d_2-\partial_2 d_1)^2$,
we obtain
$$
\frac{\partial|\curl d|^2}{\partial(\nabla d)}=2(\nabla d-(\nabla d)^{\sf T}).
$$
This readily implies that
\begin{align*}
\diver\Big(\frac{\partial(k_1(\diver d)^2+k_3|\curl d|^2)}{\partial(\nabla d)}\Big)&=2k_1\diver(\diver d\cdot I_{3\times 3})+2k_3\diver (\nabla d-(\nabla d)^{\sf T})\\
&=2k_1\nabla\diver d+2k_3\Delta d-2k_3\nabla\diver d\\
&=2k_3\Delta d+2(k_1-k_3)\nabla\diver d.
\end{align*}
Next, we will take care about the term $(d\cdot \curl d)^2$. In a first step, we obtain
\begin{equation}\label{eq:div_d_curld}
\frac{\partial(d\cdot \curl d)}{\partial(\nabla d)}=
\begin{pmatrix}
0 & -d_3 & d_2\\
d_3 & 0 & -d_1\\
-d_2 & d_1 & 0
\end{pmatrix},
\end{equation}
by employing the definition of $\curl d$. This in turn implies
\begin{align*}
\diver\Big(\frac{\partial(d\cdot \curl d)^2}{\partial(\nabla d)}\Big)&=2\diver\Big((d\cdot \curl d)\frac{\partial(d\cdot \curl d)}{\partial(\nabla d)}\Big)\\
&=2(d\cdot \curl d)\diver\Big(\frac{\partial(d\cdot \curl d)}{\partial(\nabla d)}\Big)+2\frac{\partial(d\cdot \curl d)}{\partial(\nabla d)}\cdot (\nabla(d\cdot \curl d))^{\sf T}.
\end{align*}
Here we have used the fact that $\diver (f\cdot A)=f\cdot\diver A+A\cdot (\nabla f)^{\sf T}$
for any differentiable matrix $A$ and any differentiable scalar $f$ with derivative
$\nabla f=(\partial_1 f,\partial_2 f,\partial_3 f)$.
A direct computation yields
$$\diver\left(\frac{\partial(d\cdot \curl d)}{\partial(\nabla d)}\right)=-\curl d,$$
by \eqref{eq:div_d_curld}, and hence
\begin{align*}
\diver\left(\frac{\partial(d\cdot \curl d)^2}{\partial(\nabla d)}\right)&=2\diver\left((d\cdot \curl d)\frac{\partial(d\cdot \curl d)}{\partial(\nabla d)}\right)\\
&=-2(d\cdot \curl d)\curl d+2\begin{pmatrix}
0 & -d_3 & d_2\\
d_3 & 0 & -d_1\\
-d_2 & d_1 & 0
\end{pmatrix}\cdot (\nabla(d\cdot \curl d))^{\sf T}.
\end{align*}
The last term can be rewritten as
\begin{align*}
\begin{pmatrix}
0 & -d_3 & d_2\\
d_3 & 0 & -d_1\\
-d_2 & d_1 & 0
\end{pmatrix}\cdot (\nabla(d\cdot \curl d))^{\sf T}&=(d\times\nabla)(d\cdot \curl d)\\
&=\left((d\times\nabla)\otimes d\right)\cdot \curl d+\left((d\times\nabla)\otimes \curl d\right)\cdot  d,
\end{align*}
where $a\otimes b=(a_ib_j)_{i,j}\in\RR^{3\times 3}$ for $a,b\in\RR^3$ and $\nabla=(\partial_1,\partial_2,\partial_3)^{\sf T}$.

Altogether, this yields
\begin{align*}
\diver\left(\rho\frac{\partial\tilde{\psi}}{\partial(\nabla d)}\right)&=2k_3\Delta d+2(k_1-k_3)\nabla\diver d +2(k_2-k_3)\left((d\times\nabla)\otimes \curl d\right)\cdot  d\\
  &\quad +2(k_2-k_3)\left((d\times\nabla)\otimes d\right)\cdot \curl d
  -2(k_2-k_3)(d\cdot \curl d)\curl d
\end{align*}
It remains to apply the projection $P_d=I-d\otimes d$. Since $|d|_2=1$, we obtain  $\Delta d\cdot d=-|\nabla d|_2^2$ and  hence
$$P_d(\Delta d)=\Delta d+|\nabla d|_2^2d.$$
Furthermore, a direct computation yields
\begin{equation}\label{eq:perp_d}
\left((d\times\nabla)\otimes \curl d\right)\cdot  d\perp d \mbox{ and } \left((d\times\nabla)\otimes d\right)\cdot \curl d\perp d.
\end{equation}
Therefore
$$
P_d\left(\left((d\times\nabla)\otimes \curl d\right)\cdot  d\right)=\left((d\times\nabla)\otimes \curl d\right)\cdot  d
$$
as well as
$$
P_d\left(\left((d\times\nabla)\otimes d\right)\cdot \curl d\right)=\left((d\times\nabla)\otimes d\right)\cdot \curl d.
$$
Summarizing, we arrive at the expression
\begin{align*}
  P_d\diver\Big(\frac{\partial\tilde{\psi}}{\partial(\nabla d)}\Big)&=2k_3\Delta d+2(k_1-k_3)P_d\nabla\diver d
  +2(k_2-k_3)\left((d\times\nabla)\otimes \curl d\right)\cdot  d\\
  & \quad +2(k_2-k_3)\left((d\times\nabla)\otimes d\right)\cdot \curl d
  -2(k_2-k_3)(d\cdot \curl d)P_d\curl d+2k_3|\nabla d|_2^2d.
\end{align*}
Finally, we note that  the principle part of the linearized Ericksen operator is then given by
\begin{equation}\label{eq:linEL}
\mathcal{A}_{\tilde{d}}(\nabla)d=2k_3\Delta d+2(k_1-k_3)P_{\tilde{d}}\nabla\diver d+2(k_2-k_3)\left((\tilde{d}\times\nabla)\otimes \curl d\right)\cdot  \tilde{d},
\end{equation}
for some given $\tilde{d}$.

\subsection{Boundary condition for the Ericksen operator}\label{sec:BC_Er_Op}\mbox{}

As explained in \eqref{fnbc}, the natural boundary condition for the Ericksen operator reads as
\begin{equation}\label{eq:BC_D}
P_d\frac{\partial\psi}{\partial(\nabla d)}\cdot\nu=0,
\end{equation}
with $\psi$ from \eqref{def:psi}.
We first compute $\frac{\partial\psi}{\partial(\nabla d)}\cdot\nu$. The results in Section \ref{sec:compEop} yield
$$
\frac{\partial(\diver d)^2}{\partial(\nabla d)}\cdot\nu=2(\diver d) \nu, \quad \frac{\partial \Tr(\nabla d)^2}{\partial(\nabla d)}\cdot\nu=2(\nabla d)^{\sf T}\cdot\nu, \quad
\frac{\partial|\curl d|^2}{\partial(\nabla d)}\cdot\nu=2(\nabla d-(\nabla d)^{\sf T})\cdot\nu,
$$
and
$$\frac{\partial(d\cdot \curl d)^2}{\partial(\nabla d)}\cdot\nu=2(d\cdot \curl d)
\begin{pmatrix}
0 & -d_3 & d_2\\
d_3 & 0 & -d_1\\
-d_2 & d_1 & 0
\end{pmatrix}\cdot\nu=2(d\cdot \curl d)(d\times\nu).$$
Summarizing, this yields
\begin{align*}
\frac{\partial\psi}{\partial(\nabla d)}\cdot\nu&=2k_3\nabla d\cdot \nu+2(k_1\diver d\cdot I-k_3(\nabla d)^{\sf T})\cdot \nu
+2(k_2-k_3)(d\cdot \curl d)(d\times\nu)\\
& \quad +2(k_2+k_4)((\nabla d)^{\sf T}-\diver d\cdot I)\cdot \nu.
\end{align*}
We observe that
$P_d(\nabla d\cdot \nu)=\nabla d\cdot\nu$ since $d^{\sf T}\cdot\nabla d\cdot\nu=\nu^{\sf T}\cdot(\nabla d)^{\sf T}\cdot d=0$,
as $|d|_2=1$. Furthermore, we have $P_d(d\times\nu)=d\times\nu$.
This finally yields
\begin{align}\label{eq:BC1}
 \begin{split}
P_d\frac{\partial\psi}{\partial(\nabla d)}\cdot\nu&=2k_3\nabla d\cdot \nu+2P_d(k_1\diver d\cdot I-k_3(\nabla d)^{\sf T})\cdot \nu
+2(k_2-k_3)(d\cdot \curl d)(d\times\nu)\\
&\quad +2(k_2+k_4)P_d((\nabla d)^{\sf T}-\diver d\cdot I)\cdot \nu.
\end{split}
\end{align}
We note that the principle part of the linearization $\mathcal{B}_{\tilde{d}}(\nabla)d$ of the right hand side of \eqref{eq:BC1} reads
\begin{align}\label{eq:BC2}
\begin{split}
\mathcal{B}_{\tilde{d}}(\nabla)d&=2k_3\nabla d\cdot \nu+2P_{\tilde{d}}(k_1\diver d\cdot I-k_3(\nabla d)^{\sf T})\cdot \nu
+2(k_2-k_3)(\tilde{d}\cdot \curl d)(\tilde{d}\times\nu)\\
&\quad +2(k_2+k_4)P_{\tilde{d}}((\nabla d)^{\sf T}-\diver d\cdot I)\cdot \nu,
\end{split}
\end{align}
for some given $\tilde{d}$.

\section{Functional analytic setting}\label{sec:FAsett}

Let $T\in (0,\infty)$ and $\Omega\subset\RR^3$ be a bounded domain with boundary $\partial\Omega\in C^3$. For the velocity field $u$, we choose the base space
$$L_{p,\mu}((0,T);L_p(\Omega;\RR^3)),$$
where $1<p<\infty$ and $\mu\in (1/p,1]$. In the equation for $u$, the terms of highest order are $\partial_t u$ for the variable $t\in (0,T)$ and $\partial_{x_i}\partial_{x_j}u$ for the variable $x\in\Omega$. Therefore, the optimal regularity class for $u$ is given by
$$H_{p,\mu}^1((0,T);L_p(\Omega;\RR^3))\cap L_{p,\mu}((0,T);H_p^2(\Omega;\RR^3)),$$
whereas
$$\nabla\pi\in L_{p,\mu}((0,T);L_p(\Omega;\RR^3))$$
is optimal for $\pi$. Since $\mathcal{D}_t d$ is a part of  $S_L$ and since $\diver S_L$ appears in the equation for $u$, it is natural to assume
$$d\in H_{p,\mu}^1((0,T);H_p^1(\Omega;\RR^3)).$$
In Section \ref{sec:compEop}, we explicitly computed the Ericksen operator $P_d\diver\left(\frac{\partial{\psi}}{\partial(\nabla d)}\right)$,
which acts as a quasilinear differential operator of second order.
Hence, choosing
$$L_{p,\mu}((0,T);H_p^1(\Omega;\RR^3))$$
as the base space for the $d$-equation, it follows that
$$H_{p,\mu}^1((0,T);H_p^1(\Omega;\RR^3))\cap L_{p,\mu}((0,T);H_p^3(\Omega;\RR^3))$$
is the optimal regularity class for the function $d$.

We note that for $\ell\in\{0,1\}$, the embeddings
\begin{equation*}
H_{p,\mu}^1((0,T);H_p^\ell(\Omega;\RR^3))\cap L_{p,\mu}((0,T);H_p^{2+\ell}(\Omega;\RR^3)) \hookrightarrow C([0,T];B_{pp}^{2\mu+\ell-2/p}(\Omega;\RR^3))
\end{equation*}
imply that, necessarily,
$$u(0)\in B_{pp}^{2\mu-2/p}(\Omega;\RR^3)\quad\text{and}\quad d(0)\in B_{pp}^{2\mu+1-2/p}(\Omega;\RR^3).$$
Here $B_{pq}^{s}(\Omega;\RR^3)$, $1<p,q<\infty$, $s>0$, denotes a classical Besov space, see e.g. \cite{Tri78,Tri83}.

With a view on the nonlinear boundary term $P_d\frac{\partial\psi}{\partial(\nabla d)}\cdot\nu$ computed in Section \ref{sec:BC_Er_Op}, we will also need the optimal regularity class
for the trace of $\partial_j d$, $j\in\{1,2,3\}$ on $\partial\Omega$. Given
$$d\in H_{p,\mu}^1((0,T);H_p^1(\Omega;\RR^3))\cap L_{p,\mu}((0,T);H_p^3(\Omega;\RR^3)),$$
it follows that
$$\partial_j d\in H_{p,\mu}^1((0,T);L_p(\Omega;\RR^3))\cap L_{p,\mu}((0,T);H_p^2(\Omega;\RR^3))$$
and hence
$$\tr_{\partial\Omega}\partial_j d\in
W_{p,\mu}^{1-1/2p}((0,T);L_p(\partial\Omega;\RR^3))\cap L_{p,\mu}((0,T);W_{p}^{2-1/p}(\partial\Omega;\RR^3)),
$$
for each $j\in\{1,2,3\}$.
For the remainder of this article, we will always assume that
\begin{equation}\label{eq:Assum_for_p}
1<p<\infty\quad\text{and}\quad\frac{1}{2}+\frac{5}{2p}<\mu\le 1,
\end{equation}
so that
$$W_{p}^{2\mu+\ell-2/p}(\Omega;\RR^3)=B_{pp}^{2\mu+\ell-2/p}(\Omega;\RR^3),$$
as $2\mu+\ell-2/p\notin\mathbb{N}$ for $\ell\in\{0,1\}$.
Moreover, \eqref{eq:Assum_for_p} ensures the embedding
$$
W_{p}^{2\mu+\ell-2/p}(\Omega;\RR^3)\hookrightarrow C^{1+\ell}(\overline{\Omega};\RR^3),\quad \ell\in\{0,1\}.
$$
Setting $z:=(u,d)$, we rewrite \eqref{eq:elgenincom} together with its boundary and initial conditions in the equivalent form
\begin{equation}
\label{eq:iEL2}
\left\{\begin{aligned}
\rho\partial_tu+A_u(z)u +R_1^{\sf T}(z)\partial_t d+\nabla \pi &=F_u(z)&&\text{in}&&(0,T)\times\Omega ,\\
-R_0(z)u+\gamma(z)\partial_td+A_d(z)d  &=F_d(z)&&\text{in}&&(0,T)\times\Omega ,\\
\diver u &=0&&\text{in}&&(0,T)\times\Omega ,\\
u   &=0 &&\text{in}&&(0,T)\times\partial\Omega, \\
\mathcal{B}_{{d}}(\nabla)d   &=0 &&\text{in}&&(0,T)\times\partial\Omega, \\
(u,d)  &=(u_0,d_0) &&\text{in}&&\{0\}\times\Omega.
\end{aligned}\right.
\end{equation}
Here, for given
$$
\tilde{z}=(\tilde{u},\tilde{d})\in W_{p}^{2\mu-2/p}(\Omega;\RR^3)\times W_{p}^{2\mu+1-2/p}(\Omega;\RR^3),
$$
we set $-A_d(\tilde{z})d:=\cA_{\tilde{d}}(\nabla)d$, and
\begin{equation*}
\cA_{\tilde{d}}(\nabla)d:=2k_3\Delta d
+2(k_1-k_3)P_{\tilde{d}}\nabla\diver d+2(k_2-k_3)\left((\tilde{d}\times\nabla)\otimes \curl d\right)\cdot  \tilde{d},
\end{equation*}
is the principle part of the linearized Ericksen operator defined  in \eqref{eq:linEL}.
Furthermore,
\begin{align*}
\mathcal{B}_{\tilde{d}}(\nabla)d:
&=2k_3\nabla d\cdot \nu+2P_{\tilde{d}}(k_1\diver d\cdot I-k_3(\nabla d)^{\sf T})\cdot \nu
+2(k_2-k_3)(\tilde{d}\cdot \curl d)(\tilde{d}\times\nu)\\
&\quad +2(k_2+k_4)P_{\tilde{d}}((\nabla d)^{\sf T}-\diver d\cdot I)\cdot \nu,
\end{align*}
is defined as in \eqref{eq:BC2} and, following Section \ref{sec:BC_Er_Op}, we have
$$
\mathcal{B}_{{d}}(\nabla)d=P_d\frac{\partial\psi}{\partial(\nabla d)}\cdot\nu.
$$
For the definition of the second order operator $A_u(\tilde{z})$ and of the first order operators $R_1(\tilde{z})$ and  $R_0(\tilde{z})$ we refer to  \cite[pages 1454--1455]{HiPr19},
where, however,  one has to replace $\mathsf{u}$ by $\tilde{z}$ and $\nabla_w$ by $\nabla$ in that reference.
We note further  that $\diver(\nabla u)^{\sf T}=0$ since $\diver u=0$, so that the second term in the definition of $A_u(\tilde{z})$ in \cite{HiPr19} vanishes.
If $\tilde{z}=(\tilde{u},\tilde{d})\in\RR^3\times\RR^3$ is constant, a detailed expression of the Fourier-symbols of $R_j(\tilde{z})$ an $A_u(\tilde{z})$, is given  in the proof of
Theorem \ref{thm:MaxRegLin} below. Finally, $F_u(z)$ as well as $F_d(z)$ collect all terms of lower order.

\section{Properties of the linearized Ericksen operator}

The aim of this section is twofold: we first prove that in $\R^3$, the principal part of the linearized Ericksen operator is strongly elliptic in the sense of \cite[Section 6]{PrSi16}. Secondly, we show that in half-spaces, the linearized Ericksen operator subject to  the
principal part of the linearized boundary condition satisfies the Lopatinskii-Shapiro condition. The results obtained in this section are crucial for proving maximal regularity results in the following Section \ref{sec:LinMaxReg}.

We start by recalling  from  subsections \ref{sec:compEop} and \ref{sec:BC_Er_Op} that the Ericksen operator is of the form
\begin{align*}
  P_d\diver\left(\frac{\partial{\psi}}{\partial(\nabla d)}\right)&=2k_3\Delta d+2(k_1-k_3)P_d\nabla\diver d
  +2(k_2-k_3)\left((d\times\nabla)\otimes \curl d\right)\cdot  d\\
  &\quad +2(k_2-k_3)\left((d\times\nabla)\otimes d\right)\cdot \curl d -2(k_2-k_3)(d\cdot \curl d)P_d\curl d+2k_3|\nabla d|_2^2d
\end{align*}
and that its natural (nonlinear) boundary condition reads as
\begin{align*}
P_d\frac{\partial\psi}{\partial(\nabla d)}\cdot\nu&=2k_3\nabla d\cdot \nu+2P_d(k_1\diver d\cdot I-k_3(\nabla d)^{\sf T})\cdot \nu
+2(k_2-k_3)(d\cdot \curl d)(d\times\nu)\\
&\quad +2(k_2+k_4)P_d((\nabla d)^{\sf T}-\diver d\cdot I)\cdot \nu.
\end{align*}
We recall from Section 3 that for sufficiently smooth $\eta$, the principle part of the linearized Ericksen operator is given  by
\begin{equation}\label{eq:Ell1}
\mathcal{A}_d(\nabla)\eta:=2k_3\Delta \eta+2(k_1-k_3)P_d\nabla\diver \eta+2(k_2-k_3)\left((d\times\nabla)\otimes \curl \eta\right)\cdot  d,
\end{equation}
and that the principle part of the linearized boundary condition reads as
\begin{align}\label{eq:BC3}
\begin{split}
\mathcal{B}_d(\nabla)\eta&:=2k_3\nabla \eta\cdot \nu+2P_d(k_1\diver \eta\cdot I-k_3(\nabla \eta)^{\sf T})\cdot \nu
+2(k_2-k_3)(d\cdot \curl \eta)(d\times\nu)\\
&\quad +2(k_2+k_4)P_d((\nabla \eta)^{\sf T}-\diver \eta\cdot I)\cdot \nu,
\end{split}
\end{align}
with constant coefficients $d\in\RR^3$ and with $|d|_2=1$.

\subsection{Strong Ellipticity}\mbox{}

We show in the following that the operator $-\cA_d(\nabla)$ given by \eqref{eq:Ell1} is strongly elliptic. To this end, we denote by
\begin{align*}
m_d(\xi)\eta&=2k_3|\xi|_2^2\eta+2(k_1-k_3)P_d(\xi\otimes\xi)\eta+2(k_2-k_3)\left((d\times\xi)\otimes (\xi\times\eta)\right)\cdot d\\
&=2k_3|\xi|_2^2\eta+2(k_1-k_3)P_d(\xi\otimes\xi)\eta+2(k_2-k_3)(d\times\xi)[(\xi\times\eta)\cdot d]\\
&=2k_3|\xi|_2^2\eta+2(k_1-k_3)P_d(\xi\otimes\xi)\eta+2(k_2-k_3)(d\times\xi)[(d\times\xi)\cdot \eta]\\
&=\left[2k_3|\xi|_2^2I+2(k_1-k_3)P_d(\xi\otimes\xi)+2(k_2-k_3)(d\times\xi)\otimes(d\times\xi)\right]\eta
\end{align*}
the Fourier symbol of the operator $-\cA_d(\nabla)$.

\begin{proposition}[Strong ellipticity]\label{prop:elliptic}
Assume (F) and let $d\in\RR^3$ with $|d|_2=1$. Then the operator $-\cA_d(\nabla)$ is strongly elliptic, i.e., there exists a constant $c>0$ such that
\begin{equation}\label{eq:normell}
m_d(\xi)\eta\cdot\eta\ge c|\xi|_2^2|\eta|_2^2
\end{equation}
for all $\xi,\eta\in\RR^3$.
\end{proposition}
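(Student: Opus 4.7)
The plan is to reduce $m_d(\xi)\eta\cdot\eta$ to a polynomial in a handful of scalar invariants and then dominate the single indefinite cross term by a parametrised application of the weighted AM--GM inequality. Since $|d|_2 = 1$, I would decompose $\xi = ad + \xi^\perp$ and $\eta = bd + \eta^\perp$ with $a := d\cdot\xi$, $b := d\cdot\eta$, $\xi^\perp := P_d\xi$, $\eta^\perp := P_d\eta$, and set $r^2 := |\xi^\perp|_2^2$, $q^2 := |\eta^\perp|_2^2$, $s := \xi^\perp\cdot\eta^\perp$, $t := (d\times\xi)\cdot\eta$. Using $(\xi\otimes\xi)\eta = (\xi\cdot\eta)\xi$ one finds $(P_d(\xi\otimes\xi)\eta)\cdot\eta = (ab + s)s$, while $((d\times\xi)\otimes(d\times\xi)\eta)\cdot\eta = t^2$ is immediate. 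As $\zeta\mapsto d\times\zeta$ is an isometry on $d^\perp$, Lagrange's identity gives $r^2 q^2 = s^2 + t^2$, and combining with $|\xi|_2^2|\eta|_2^2 = a^2b^2 + a^2q^2 + r^2b^2 + r^2q^2$ yields the key identity
\begin{equation*}
m_d(\xi)\eta\cdot\eta = 2k_3 a^2 b^2 + 2k_3(a^2 q^2 + r^2 b^2) + 2k_1 s^2 + 2k_2 t^2 + 2(k_1 - k_3)abs.
\end{equation*}

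The only term without a definite sign is $2(k_1 - k_3)abs$. For free parameters $\epsilon \in [0,1]$ and $\alpha > 0$ I would bound its absolute value by
\begin{equation*}
2|k_1-k_3|\,|abs| \le \epsilon|k_1-k_3|\bigl(\alpha (ab)^2 + \alpha^{-1} s^2\bigr) + (1-\epsilon)|k_1-k_3|\bigl(a^2 q^2 + r^2 b^2\bigr),
\end{equation*}
where the first bracket is weighted AM--GM applied to $2|abs| \le \alpha(ab)^2 + \alpha^{-1}s^2$, and the second results from Cauchy--Schwarz $|s| \le rq$ followed by $2|ab|\,rq \le a^2 q^2 + r^2 b^2$. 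After substitution the monomials $a^2 b^2$, $a^2 q^2 + r^2 b^2$, $s^2$, $t^2$ retain the coefficients $2k_3 - \epsilon|k_1-k_3|\alpha$, $2k_3 - (1-\epsilon)|k_1-k_3|$, $2k_1 - \epsilon|k_1-k_3|\alpha^{-1}$, $2k_2$, respectively. Eliminating $\alpha$ via the constraint $\alpha\in(\epsilon|k_1-k_3|/(2k_1),\,2k_3/(\epsilon|k_1-k_3|))$ shows that simultaneous strict positivity is achievable precisely when $|k_1-k_3| < 2k_3 + 2\sqrt{k_1 k_3}$. Setting $\tau := \sqrt{k_1/k_3}$ and solving $(\tau - 3)(\tau + 1) < 0$ identifies this with $k_1 < 9 k_3$, i.e., the first alternative in hypothesis (F). Under the second alternative $2|k_1-k_3| < \min\{k_2, k_3\}$ one has $|k_1-k_3| < k_3/2 < 2k_3$, so the choice $\epsilon = 0$ already suffices, with no splitting required.

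Combining these estimates, each of the five monomials on the right-hand side of the identity above carries a strictly positive coefficient depending only on $k_1, k_2, k_3$, whence
\begin{equation*}
m_d(\xi)\eta\cdot\eta \ge c\bigl(a^2 b^2 + a^2 q^2 + r^2 b^2 + s^2 + t^2\bigr) = c\,|\xi|_2^2 |\eta|_2^2,
\end{equation*}
which is precisely \eqref{eq:normell}.

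The principal obstacle is the single indefinite term $2(k_1-k_3)abs$. Applying either one of the two natural AM--GM bounds in isolation only yields strong ellipticity on a strict subrange of hypothesis (F)---the pure weighted AM--GM bound alone would force $(k_1-k_3)^2 < 4k_1 k_3$, while the pure Cauchy--Schwarz bound alone would force $|k_1-k_3|<2k_3$. It is precisely the convex interpolation between the two bounds that recovers the sharp threshold $k_1 < 9 k_3$ built into (F).
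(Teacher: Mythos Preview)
Your proof is correct and takes a genuinely different route from the paper's. The paper proceeds spectrally: it symmetrises $m_d(\xi)$, normalises $|\xi|_2=1$, and then explicitly computes the eigenvalues of the resulting $3\times 3$ matrix by exhibiting eigenvectors in $\operatorname{span}\{d,\zeta,d\times\zeta\}$; the eigenvalues are then shown to be positive case by case, with a separate perturbative argument for the second alternative in (F). Your approach instead reduces $m_d(\xi)\eta\cdot\eta$ to a quadratic form in the scalar invariants $(a,b,r,q,s,t)$ arising from the orthogonal splitting along $d$, isolates the single indefinite cross term $2(k_1-k_3)abs$, and controls it by a convex interpolation between two elementary AM--GM/Cauchy--Schwarz bounds.

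What each approach buys: the paper's eigenvalue computation is more explicit about the spectrum of the symbol, which could be useful elsewhere, but requires tracking several subcases and a separate argument for the second alternative $2|k_1-k_3|<\min\{k_2,k_3\}$. Your argument is more elementary (no eigenvector ansatz, no case splits on $d\parallel\zeta$ versus $d\times\zeta\neq 0$) and, as you observe, shows directly that the sharp threshold is $k_1<9k_3$; in particular it makes transparent that the second alternative in (F) is already subsumed by the first, since $2|k_1-k_3|<k_3$ forces $k_1<\tfrac{3}{2}k_3<9k_3$.
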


\begin{proof} We consider in a first step the condition $9k_3>k_1$ from assumption (F). Let us denote by
$$
m_d^{sym}(\xi)=2k_3|\xi|_2^2I+(k_1-k_3)[P_d(\xi\otimes\xi)+(\xi\otimes\xi)P_d]+2(k_2-k_3)(d\times\xi)\otimes(d\times\xi)
$$
the symmetric part of $m_d(\xi)$.
We note that for $\xi\in\RR^3$, $\xi\neq 0$, it holds that $m_d^{sym}(\xi)=|\xi|_2^2\tilde{m}_d^{sym}(\zeta)$, where $\zeta=\xi/|\xi|_2$ and
$$\tilde{m}_d^{sym}(\zeta)=2k_3I+(k_1-k_3)[P_d(\zeta\otimes\zeta)+(\zeta\otimes\zeta)P_d]+2(k_2-k_3)(d\times\zeta)\otimes(d\times\zeta).$$
We will prove that the symmetric matrix $\tilde{m}_d^{sym}(\zeta)\in\RR^{3\times 3}$ is positive definite. To this end, we distinguish several cases.

Case I: Assume that $d\ \|\ \zeta$. Then $d\times \zeta=0$ and $d=\zeta$ or $d=-\zeta$, since $|d|_2=1$. It follows that $\zeta\otimes\zeta=d\otimes d$ and therefore
$$(I-d\otimes d)(d\otimes d)=(d\otimes d)(I-d\otimes d)=(d\otimes d)-(d\otimes d)=0,$$
where we made again use of the fact $|d|_2=1$. So, in this case,
$$\tilde{m}_d^{sym}(\zeta)=2k_3I,$$
wherefore all three eigenvalues coincide with $2k_3>0$.

Case II: Assume that $d\times\zeta\neq 0$. In this case, $d\times\zeta$ is an eigenvector of $\tilde{m}_d^{sym}(\zeta)$, since
\begin{align*}
\tilde{m}_d^{sym}(\zeta)(d\times \zeta)&=2k_3(d\times\zeta)+2(k_2-k_3)|d\times\zeta|_2^2(d\times\zeta)\\
&=\left(2k_2|d\times\zeta|_2^2+2k_3(1-|d\times\zeta|_2^2)\right)(d\times\zeta).
\end{align*}
Here we have used the fact that $\zeta,d\perp (d\times\zeta)$. The corresponding eigenvalue satisfies
$$2k_2|d\times\zeta|_2^2+2k_3(1-|d\times\zeta|_2^2)\ge2\min\{k_2,k_3\}>0,$$
since $|d|_2=|\zeta|_2=1$.
We are now looking for other eigenvalues and eigenvectors and make the ansatz $\alpha\zeta+\beta d$, $\alpha,\beta\in\RR$. Since
$$(\alpha\zeta+\beta d)\perp (d\times\zeta)$$
a short computation shows that
\begin{align*}
\tilde{m}_d^{sym}(\zeta)(\alpha\zeta+\beta d)&=[2k_3\alpha+(k_1-k_3)(\alpha+\beta z+\alpha (1-z^2))]\zeta\\
&+ [2k_3\beta-(k_1-k_3)(\alpha z+\beta z^2)]d,
\end{align*}
where $z:=(d|\zeta)$.

Case II.1: If $z=0$, then $\zeta$ is an eigenvector with eigenvalue $2k_1>0$ ($(\alpha,\beta)=(1,0)$) and also $d$ is an eigenvector with eigenvalue $2k_3>0$ ($(\alpha,\beta)=(0,1)$).

Case II.2: In case $z\neq 0$, we require that there exists $\lambda$ (an eigenvalue) such that
$$2k_3\alpha+(k_1-k_3)(\alpha+\beta z+\alpha (1-z^2))=\lambda\alpha$$
and
$$2k_3\beta-(k_1-k_3)(\alpha z+\beta z^2)=\lambda\beta.$$
If $k_1=k_3$, then $\zeta$ and $d$ are eigenvectors to the same eigenvalue $2k_3$. In case $k_1\neq k_3$, we solve the last equation for $\lambda$, to obtain (observe that $\beta=0$ would yield $\alpha=0$ in case $z\neq 0$ and $k_1\neq k_3$)
\begin{equation}\label{eq:EigValLambda}
\lambda=2k_3-(k_1-k_3)(\alpha z/\beta+z^2),
\end{equation}
and plug it into the other equation. This yields
$$2\alpha+\beta z+\alpha^2 z/\beta=0.$$
Multiplying with $\beta$ and dividing by $z\neq 0$ implies
$$\alpha^2+\frac{2}{z}\alpha\beta+\beta^2=0.$$
This quadratic equation can be solved for $\alpha$ to the result
$$\alpha=-\frac{\beta}{z}(1\pm\sqrt{1-z^2}),$$
which in turn implies
$$\frac{\alpha z}{\beta}=-1\mp\sqrt{1-z^2}.$$
We insert this expression into the equation \eqref{eq:EigValLambda} for $\lambda$, to obtain
\begin{align*}
\lambda_\pm&=2k_3-(k_1-k_3)(-1\mp\sqrt{1-z^2}+z^2)\\
&=2k_3+(k_1-k_3)(1-z^2\pm\sqrt{1-z^2}).
\end{align*}
This shows that, if $k_1<k_3$, then $\lambda_+<\lambda_-$ and
$$\lambda_+= 2k_3+(k_1-k_3)(1-z^2+\sqrt{1-z^2})\ge 2k_3+2(k_1-k_3)=2k_1>0,$$
since
$$0\le 1-z^2+\sqrt{1-z^2}\le 2$$
for $|z|=|(d|\zeta)|\le 1$, by the Cauchy-Schwarz inequality.

On the contrary, if $k_1>k_3$, then $\lambda_+>\lambda_-$ and
$$\lambda_-= 2k_3+(k_1-k_3)(1-z^2-\sqrt{1-z^2}),$$
This time we use the estimate
$$1-z^2-\sqrt{1-z^2}\ge -\frac{1}{4},$$
valid for all $|z|\le 1$, to obtain
$$\lambda_-\ge 2k_3-\frac{1}{4}(k_1-k_3)=\frac{9k_3-k_1}{4}>0,$$
by the assumption $9k_3>k_1$ from (F). This shows that the symmetric matrix $\tilde{m}_d^{sym}(\zeta)$ is positive definite, which in turn is equivalent to the fact that the matrix $\tilde{m}_d(\zeta)$ is positive definite.

Let us finally consider the condition $2|k_1-k_3|<\min\{k_2,k_3\}$ from assumption (F). The considerations from above for the very special case $k_1=k_3$ show that the symmetric matrix
$$
M_d(\xi):=2k_3|\xi|_2^2I+2(k_2-k_3)(d\times\xi)\otimes(d\times\xi)
$$
is positive definite and satisfies the estimate
$$M_d(\xi)\eta\cdot\eta\ge 2\min\{k_2,k_3\}|\xi|_2^2|\eta|_2^2.$$
Since $|d|_2=1$, it holds that
$$2|P_d(\xi\otimes\xi)\eta\cdot\eta|=2|(I-d\otimes d)(\xi\otimes\xi)\eta\cdot\eta|\le 4|\xi|_2^2|\eta|_2^2$$
by the Cauchy-Schwarz inequality, which implies that the matrix
$$M_d(\xi)+2(k_1-k_3)P_d(\xi\otimes\xi)$$
is positive definite, provided $2|k_1-k_3|<\min\{k_2,k_3\}$.
This completes the proof of Proposition \ref{prop:elliptic}.
\end{proof}

\begin{remark}
{\rm If, instead of condition (F), one merely assumes that the Frank coefficients satisfy $k_j>0\ \text{for}\ j\in\{1,2,3\}$,
one can prove that all eigenvalues of the (non-symmetric) matrix-valued symbol $m_d(\xi)$ of $-\cA_d(\nabla)$ are real and positive, i.e. the operator $-\cA_d(\nabla)$ is \emph{normally elliptic}.  Let us emphasize that, in general, normal ellipticity \emph{does not} imply strong ellipticity. However, in the proof of Theorem \ref{thm:MaxRegLin} below we need to know that $-\cA_d(\nabla)$ is strongly elliptic.}
\end{remark}

\subsection{The Lopatinskii-Shapiro condition}\mbox{}

For given $d\in\RR^3$ with $|d|_2=1$, we consider the pair $(\cA_d(\nabla),\cB_d(\nabla))$, defined in \eqref{eq:Ell1} and \eqref{eq:BC3}.
Our aim is to prove that $(\cA_d(\nabla),\cB_d(\nabla))$ satisfies the Lopatinskii-Shapiro condition, formulated precisely in  Proposition \ref{pro:LopShap_d} below. To this end,
let $\sH$ denote a half space in $\RR^3$, hence $\sH$ may be written as a translation and/or rotation of the half space $\RR_+^3$. We start with the following

\begin{proposition}\label{prop:LSC0}
Assume (F) and let $d\in\RR^3$ with $|d|_2=1$.
Then, for each $\lambda\in\CC$ with $\real\lambda\ge 0$, the only solution $\eta\in H_2^2(\sH;\CC^3)$ of the boundary value problem
$$\lambda\eta-\cA_d(\nabla)\eta=0\quad\text{in $\sH$},\quad \cB_d(\nabla) \eta=0\quad\text{on $\partial\sH$},$$
is $\eta=0$. Moreover, for each $\eta_1\in H_2^2(\sH;P_d\CC^3)$ with $\cB_d(\nabla) \eta_1=0$, it holds that
$$
-(\mathcal{A}_d(\nabla)\eta_1|\eta_1)_{L_2}\ge\alpha\|\nabla\eta_1\|_{L_2(\sH)}^2,
$$
with the constant $\alpha$ from assumption (F).
\end{proposition}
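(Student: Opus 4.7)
The plan is to derive a single coercive identity for $-(\cA_d(\nabla)\eta|\eta)_{L_2(\sH)}$ that yields both statements. The main tools are integration by parts, the boundary condition $\cB_d(\nabla)\eta = 0$, and the Bochner-type identity
\[
\int_\sH|\nabla\eta|_2^2\,dx = \int_\sH(|\diver\eta|^2+|\curl\eta|_2^2)\,dx + \mathrm{NL}(\eta),
\]
where $\mathrm{NL}(\eta) := \int_{\partial\sH}[(\bar\eta\cdot\nabla)\eta\cdot\nu - (\diver\eta)(\bar\eta\cdot\nu)]\,d\sigma$ is the surface residue of the null Lagrangian $\tr(\nabla\eta)^2-(\diver\eta)^2$. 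For a general solution $\eta\in H_2^2(\sH;\CC^3)$, set $f:=d\cdot\eta$. Dotting the PDE $\lambda\eta=\cA_d(\nabla)\eta$ with the constant vector $d$ and using $d\cdot\Delta\eta=\Delta f$, $d^{\sf T}P_d=0$, and $d\cdot(d\times v)=0$ yields $(\lambda-2k_3\Delta)f=0$ in $\sH$; dotting $\cB_d(\nabla)\eta=0$ with $d$ annihilates every term except $2k_3\partial_\nu\eta\cdot d = 2k_3\partial_\nu f$, so $\partial_\nu f=0$ on $\partial\sH$. Testing with $\bar f$, the standard energy argument together with $\real\lambda\ge 0$ and $k_3>0$ forces $\nabla f=0$; hence $f\equiv 0$, since no nonzero constant lies in $H_2^2(\sH)$ on the unbounded half-space. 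This reduces the first statement to the coercive estimate for $\eta\in H_2^2(\sH;P_d\CC^3)$, which is the content of the second statement.

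For $\eta\in H_2^2(\sH;P_d\CC^3)$ I would integrate each of the three pieces of $\cA_d(\nabla)\eta$ against $\bar\eta$. Using $P_d\bar\eta=\bar\eta$, $\diver(P_d\bar\eta)=\diver\bar\eta$ (which holds since $d$ is constant and $d\cdot\bar\eta\equiv 0$), the triple product $(d\times\nabla g)\cdot\bar\eta=-\nabla g\cdot(d\times\bar\eta)$, and $\diver(d\times\bar\eta)=-d\cdot\curl\bar\eta$, one arrives at
\[
-(\cA_d(\nabla)\eta|\eta)_{L_2}=2k_3\|\nabla\eta\|_{L_2}^2+2(k_1-k_3)\|\diver\eta\|_{L_2}^2+2(k_2-k_3)\|d\cdot\curl\eta\|_{L_2}^2-B_\star,
\]
where $B_\star$ is the sum of the three resulting boundary integrals. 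The pairing $(\cB_d(\nabla)\eta|\bar\eta)_{L_2(\partial\sH)}$, expanded using $(\nabla\eta)^{\sf T}\nu=\nabla(\eta\cdot\nu)$, $P_d\bar\eta=\bar\eta$, $(d\times\nu)\cdot\bar\eta=-(d\times\bar\eta)\cdot\nu$ and the relation $k_2+k_4=\alpha$ of assumption~(F), vanishes by the boundary condition and shares three of its four integrands with $B_\star$. Combined with the constant-$\nu$ identity $\nabla(\eta\cdot\nu)\cdot\bar\eta=(\bar\eta\cdot\nabla)\eta\cdot\nu$, elementary algebra yields $B_\star=2(k_3-\alpha)\,\mathrm{NL}(\eta)$. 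Substituting this and applying the Bochner identity together with the real-unit-vector identity $|\curl\eta|_2^2=|d\cdot\curl\eta|^2+|d\times\curl\eta|_2^2$, everything collapses into
\[
-(\cA_d(\nabla)\eta|\eta)_{L_2}=2(k_1-\alpha)\|\diver\eta\|_{L_2}^2+2(k_2-\alpha)\|d\cdot\curl\eta\|_{L_2}^2+2(k_3-\alpha)\|d\times\curl\eta\|_{L_2}^2+2\alpha\|\nabla\eta\|_{L_2}^2.
\]

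Since $k_j\ge\alpha>0$ for $j=1,2,3$ by~(F), the first three summands are nonnegative, so $-(\cA_d(\nabla)\eta|\eta)_{L_2}\ge 2\alpha\|\nabla\eta\|_{L_2(\sH)}^2\ge\alpha\|\nabla\eta\|_{L_2(\sH)}^2$, which proves the second statement. Inserting this bound into $\lambda\|\eta\|_{L_2}^2=(\cA_d(\nabla)\eta|\eta)_{L_2}$ and taking real parts gives $\real\lambda\|\eta\|_{L_2}^2+2\alpha\|\nabla\eta\|_{L_2}^2\le 0$; both terms being nonnegative, $\nabla\eta=0$ and hence $\eta\equiv 0$ by the $H_2^2(\sH)$ argument from the first paragraph. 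The main obstacle is the algebraic bookkeeping of the surface integrals in the second paragraph: the four boundary integrands of $\cB_d(\nabla)\eta$ must be aligned with the three boundary integrands of $B_\star$, and the structural choice $k_2+k_4=\alpha$ from~(F) is precisely what produces the coefficient $2(k_3-\alpha)$ in front of $\mathrm{NL}(\eta)$---the piece that allows the Bochner identity to collapse the formula into the coercive form above. This provides an analytic reflection of the thermodynamic role of the anchoring boundary condition emphasized in Section~\ref{sec:Model_and_Results}.
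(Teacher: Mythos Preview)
Your proof is correct and follows essentially the same strategy as the paper's: split off the $d$-component to reduce to $\eta\in P_d\CC^3$, then integrate $-(\cA_d(\nabla)\eta|\eta)_{L_2}$ by parts and exploit the boundary condition $\cB_d(\nabla)\eta=0$ together with the null-Lagrangian structure of $\tr(\nabla\eta)^2-(\diver\eta)^2$ to arrive at a coercive identity whose nonnegative terms carry the coefficients $k_j-\alpha$. The paper organizes the bookkeeping slightly differently---it uses the boundary condition to cancel three of the four boundary integrands immediately and then converts the surviving $(k_2+k_4)$-term back into a volume integral---whereas you collect all boundary terms into $B_\star$, subtract the vanishing pairing $(\cB_d(\nabla)\eta|\eta)_{\partial\sH}$, and recognize the remainder as $2(k_3-\alpha)\,\mathrm{NL}(\eta)$ before invoking the Bochner identity. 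Your packaging is arguably cleaner and makes the role of the structural assumption $k_2+k_4=\alpha$ more transparent; your final identity also yields the sharper constant $2\alpha$ in the coercivity estimate (the paper states only $\alpha$, which is of course sufficient for the proposition as written).
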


\begin{proof}
We split $\eta$ and write $\eta=P_d\eta+(I-P_d)\eta=:\eta_1+\eta_2$. Applying $(I-P_d)$ to the equation $\lambda\eta-\cA_d(\nabla)\eta=0$ yields
$$
\lambda\eta_2-2k_3(I-P_d)\Delta(\eta_1+\eta_2)=0,
$$
since $(I-P_d)P_d=0$, as $|d|_2=1$, and $\left((d\times\nabla)\otimes \curl \eta\right)\cdot  d\perp d$.
Moreover, we have
$$(I-P_d)\Delta(\eta_1+\eta_2)=\Delta(I-P_d)(\eta_1+\eta_2)=\Delta\eta_2.$$
Applying $(I-P_d)$ to the boundary condition $\cB_d(\nabla)\eta=0$ yields
$$0=2k_3(I-P_d)\nabla (\eta_1+\eta_2)\cdot\nu=2k_3\nabla (I-P_d)(\eta_1+\eta_2)\cdot\nu=2k_3\nabla\eta_2\cdot\nu.$$
Therefore, $\eta_2$ solves the problem
$$\lambda\eta_2-2k_3\Delta\eta_2=0\quad\text{in $\sH$},\quad \nabla\eta_2\cdot\nu=0\quad\text{on $\partial\sH$}.$$
Taking the inner product with $\eta_2$ in $L_2(\sH;\CC^3)$, integrating by parts and taking real parts, yields
$$\real\lambda\|\eta_2\|_{L_2(\sH)}^2+2k_3\|\nabla\eta_2\|_{L_2(\sH)}^2=0.$$
If $\real\lambda>0$, then $\eta_2=0$. If $\real\lambda=0$, then $\eta_2$ is constant, hence $\eta_2=0$, since otherwise $\eta_2\notin L_2(\sH)$. This shows that $\eta=\eta_1=P_d\eta$.

Taking the inner product of $-\cA_d(\nabla)\eta_1$ with $\eta_1$  in $L_2(\sH;\CC^3)$, it follows that
\begin{equation*}
(\cA_d(\nabla)\eta_1|\eta_1)_{L_2}=2k_3(\Delta\eta_1|\eta_1)_{L_2}\\+2(k_1-k_3)(\nabla\diver\eta_1|\eta_1)_{L_2}+2(k_2-k_3)(\left((d\times\nabla)\otimes \curl \eta_1\right)\cdot  d|\eta_1)_{L_2},
\end{equation*}
since $P_d^{\sf T}=P_d$ and $P_d\eta_1=\eta_1$.
We write
$$k_1(\nabla\diver\eta_1|\eta_1)_{L_2}=k_1(\diver(\diver\eta_1\cdot I)|\eta_1)_{L_2}$$
and
$$k_3(\nabla\diver\eta_1|\eta_1)_{L_2}=k_3(\diver(\nabla\eta_1)^{\sf T}|\eta_1)_{L_2}.$$
Since $d\in\RR^3$ is constant, the latter term can be rewritten as
$$
(\left((d\times\nabla)\otimes \curl \eta_1\right)\cdot  d|\eta_1)_{L_2}=(\diver(\mathsf{D}(d\cdot\curl\eta_1))|\eta_1)_{L_2},
$$
where
$$
\mathsf{D}:=
\begin{pmatrix}
0 & -d_3 & d_2\\
d_3 & 0 & -d_1\\
-d_2 & d_1 & 0
\end{pmatrix}.
$$
Integrating by parts and invoking the boundary condition $\cB_d(\nabla) \eta_1=0$, yields
\begin{align*}
2k_3(\Delta\eta_1|\eta_1)_{L_2}&+2k_1(\diver(\diver\eta_1\cdot I)|\eta_1)_{L_2}-2k_3(\diver(\nabla\eta_1)^{\sf T}|\eta_1)_{L_2}
+2(k_2-k_3)(\diver(\mathsf{D}(d\cdot\curl\eta_1))|\eta_1)_{L_2}\\
                               &=-2k_3(\nabla\eta_1|\nabla\eta_1)_{L_2}-2k_1\|\diver\eta_1\|_{L_2}^2+2k_3((\nabla\eta_1)^{\sf T}|\nabla\eta_1)_{L_2} \\
                               & \quad  -2(k_2-k_3)(\mathsf{D}(d\cdot\curl\eta_1)|\nabla\eta_1)_{L_2}
                                 -2(k_2+k_4)((\nabla \eta_1)^{\sf T}-\diver \eta_1\cdot I)\cdot \nu|\eta_1)_{L_{2,\partial\sH}}
\end{align*}
Another integration by parts yields, that the last (boundary) term can be written as
\begin{align*}
  ((\nabla \eta_1)^{\sf T}-\diver \eta_1\cdot I)\cdot \nu|\eta_1)_{L_{2,\partial\sH}}&=((\nabla \eta_1)^{\sf T}-\diver \eta_1\cdot I)|\nabla\eta_1)_{L_{2}}
                                                                                       = ((\nabla \eta_1)^{\sf T}|\nabla \eta_1)_{L_2}-\|\diver \eta_1\|_{L_{2}}^2\\
&=((\nabla \eta_1)^{\sf T}-\nabla\eta_1|\nabla \eta_1)_{L_2}+(\nabla \eta_1|\nabla \eta_1)_{L_2}-\|\diver \eta_1\|_{L_{2}}^2,
\end{align*}
since $\diver (\nabla\eta_1)^{\sf T}-\diver(\diver\eta_1\cdot I)=0$.

A short computation shows that $(\mathsf{D}(d\cdot\curl\eta_1)|\nabla\eta_1)_{L_2}=\|d\cdot\curl \eta_1\|_{L_2}^2$. In summary, this yields
\begin{align*}
-(\cA_d(\nabla)\eta_1|\eta_1)_{L_2}&= 2k_1\|\diver\eta_1\|_{L_2}^2+2k_3(\nabla\eta_1-(\nabla\eta_1)^{\sf T}|\nabla\eta_1)_{L_2} +2(k_2-k_3)\|d\cdot\curl \eta_1\|_{L_2}^2\\
&\quad +2(k_2+k_4)[((\nabla \eta_1)^{\sf T}-\nabla\eta_1|\nabla \eta_1)_{L_2}+\|\nabla \eta_1\|_{L_2}^2-\|\diver \eta_1\|_{L_{2}}^2]
\end{align*}
We observe that
$$(\nabla\eta_1-(\nabla\eta_1)^{\sf T}|\nabla\eta_1)_{L_2}=\|\curl\eta_1\|_{L_2}^2,$$
whence
\begin{align*}
  -(\cA_d(\nabla)\eta_1|\eta_1)_{L_2}&= 2(k_1-k_2-k_4)\|\diver\eta_1\|_{L_2}^2+2(k_3-k_2-k_4)\|\curl\eta_1\|_{L_2}^2\\
 &\quad  +2(k_2-k_3)\|d\cdot\curl \eta_1\|_{L_2}^2+(k_2+k_4)\|\nabla \eta_1\|_{L_2}^2.
\end{align*}
Since $|d|_2=1$, we may write
$$
\|\curl\eta_1\|_{L_2}^2=\|d\times\curl\eta_1\|_{L_2}^2+\|d\cdot\curl\eta_1\|_{L_2}^2,
$$
which in turn yields
\begin{align*}
-(\cA_d(\nabla)\eta_1|\eta_1)_{L_2}&= 2(k_1-k_2-k_4)\|\diver\eta_1\|_{L_2}^2
+2(k_3-k_2-k_4)\|d\times\curl\eta_1\|_{L_2}^2\\
&\quad -k_4\|d\cdot\curl \eta_1\|_{L_2}^2+(k_2+k_4)\|\nabla \eta_1\|_{L_2}^2.
\end{align*}
For any $0<\alpha\le\min\{k_1,k_2,k_3\}$, it follows from (F) that $k_4=\alpha-k_2$, hence
$$
k_j-k_2-k_4=k_j-\alpha\ge 0,\ j\in\{1,3\},\quad \mbox{and} \quad -k_4\ge 0,
$$
which implies
$$-(\cA_d(\nabla)\eta_1|\eta_1)_{L_2}\ge \alpha\|\nabla \eta_1\|_{L_2(\sH)}^2.$$
This is the claimed estimate.

Finally, taking the inner product of $\lambda\eta_1-\cA_d(\nabla)\eta_1=0$ with $\eta_1$ in $L_2(\sH;\CC^3)$, and taking real parts, we obtain
$$
0=\real\lambda\|\eta_1\|_{L_2(\sH)}^2-(\cA_d(\nabla)\eta_1|\eta_1)_{L_2}\ge \real\lambda\|\eta_1\|_{L_2(\sH)}^2+\alpha\|\nabla \eta_1\|_{L_2(\sH)}^2,
$$
which in turn implies that $\eta_1=0$ .
\end{proof}

By the same strategy, we are able to prove the following result.

\begin{proposition}[Lopatinskii-Shapiro condition]\label{pro:LopShap_d}
Assume (F) and let $d\in\RR^3$ with $|d|_2=1$.
Then, for all $\lambda\in\CC$ with $\real\lambda\ge 0$ and all $\xi,\nu\in\RR^3$ with $\xi\perp\nu$ and $(\lambda,\xi)\neq (0,0)$, the only solution $\eta\in H_2^2(\RR_+;\CC^3)$ of the initial value problem
$$\lambda\eta-\cA_d(i\xi+\nu\partial_y)\eta=0,\quad y>0,\quad \cB_d(i\xi+\nu\partial_y) \eta=0,\quad y=0,$$
is $\eta=0$. Moreover, for each $\eta_1\in H_2^2(\RR_+;P_d\CC^3)$ with $\cB_d(i\xi+\nu\partial_y) \eta_1=0$, it holds that
$$-(\mathcal{A}_d(i\xi+\nu\partial_y)\eta_1|\eta_1)_{L_2}\ge\alpha(|\xi|^2\|\eta_1\|_{L_2(\RR_+)}^2+\|\partial_y\eta_1\|_{L_2(\RR_+)}^2),$$
with the constant $\alpha$ from assumption (F).
\end{proposition}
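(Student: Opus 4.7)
The plan is to mirror the proof of Proposition \ref{prop:LSC0}, interpreting the gradient $\nabla$ throughout as the symbolic operator $i\xi + \nu\partial_y$ acting on functions of $y \in \RR_+$. Every tangential integration by parts then reduces to multiplication by $i\xi_j$ and produces no trace, while each integration by parts in $\partial_y$ contributes a single trace at $y = 0$.

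The first step is the splitting $\eta = \eta_1 + \eta_2$ with $\eta_1 := P_d\eta$ and $\eta_2 := (I-P_d)\eta$. Applying $(I-P_d)$ to the ODE annihilates the second term of $\cA_d$ (because $(I-P_d)P_d = 0$, using $|d|_2 = 1$) and the third (by \eqref{eq:perp_d}), leaving the scalar equation $\lambda\eta_2 - 2k_3(\partial_y^2 - |\xi|^2)\eta_2 = 0$ on $\RR_+$. Applying $(I-P_d)$ to the boundary condition and using $(I-P_d)(d\times\nu) = 0$ kills every term except the first, yielding $\partial_y\eta_2(0) = 0$. Testing with $\eta_2$ in $L_2(\RR_+)$, integrating by parts, and taking real parts gives
\[
\real\lambda\,\|\eta_2\|_{L_2(\RR_+)}^2 + 2k_3\bigl(|\xi|^2\|\eta_2\|_{L_2(\RR_+)}^2 + \|\partial_y\eta_2\|_{L_2(\RR_+)}^2\bigr) = 0,
\]
forcing $\eta_2 = 0$ (directly when $|\xi| > 0$; via $L_2$-integrability on $\RR_+$ of a constant when $\xi = 0$).

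Next I would establish the coercivity estimate for arbitrary $\eta_1 \in H_2^2(\RR_+; P_d\CC^3)$ with $\cB_d(i\xi + \nu\partial_y)\eta_1|_{y=0} = 0$ by transcribing the manipulations of Proposition \ref{prop:LSC0} verbatim. The algebraic rewriting of $\cA_d$ carries over because $d$ is constant, and the integrations by parts in $\partial_y$ generate boundary traces at $y = 0$ which the boundary condition absorbs, leaving only the residual $-2(k_2+k_4)\bigl(((\nabla\eta_1)^{\sf T} - \diver\eta_1\cdot I)\cdot\nu \,\big|\, \eta_1\bigr)_{y=0}$ (with $\nabla$ and $\diver$ interpreted through $i\xi + \nu\partial_y$). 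Processing this residual and using $\|\curl\eta_1\|^2 = \|d\times\curl\eta_1\|^2 + |d\cdot\curl\eta_1|^2$ together with $k_4 = \alpha - k_2$ from (F), exactly as in Proposition \ref{prop:LSC0}, one arrives at
\[
-\bigl(\cA_d(i\xi + \nu\partial_y)\eta_1 \,\big|\, \eta_1\bigr)_{L_2(\RR_+)} \ge \alpha\bigl(|\xi|^2\,\|\eta_1\|_{L_2(\RR_+)}^2 + \|\partial_y\eta_1\|_{L_2(\RR_+)}^2\bigr).
\]

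Finally, since $\eta = \eta_1$, pairing $\lambda\eta_1 - \cA_d(i\xi + \nu\partial_y)\eta_1 = 0$ with $\eta_1$ in $L_2(\RR_+;\CC^3)$, taking real parts, and invoking the coercivity estimate yields
\[
\real\lambda\,\|\eta_1\|_{L_2(\RR_+)}^2 + \alpha\bigl(|\xi|^2\|\eta_1\|_{L_2(\RR_+)}^2 + \|\partial_y\eta_1\|_{L_2(\RR_+)}^2\bigr) \le 0.
\]
Non-negativity of every summand forces each to vanish, and $\eta_1 = 0$ follows exactly as for $\eta_2$, with $L_2$-integrability on $\RR_+$ settling the sub-case $\xi = 0$. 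The main technical point is the coercivity step: one must verify that in the tangentially Fourier-transformed setting the boundary traces generated by integration by parts in $\partial_y$ line up precisely with $\cB_d(i\xi+\nu\partial_y)\eta_1$, so that the absorption argument of Proposition \ref{prop:LSC0} transfers with the same constant $\alpha$.
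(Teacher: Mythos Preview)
Your proposal is correct and follows essentially the same approach as the paper's own proof: both mirror Proposition~\ref{prop:LSC0} by replacing $\nabla$ with $i\xi+\nu\partial_y$, split $\eta=P_d\eta+(I-P_d)\eta$, eliminate $\eta_2$ via the scalar Neumann problem, transcribe the integration-by-parts identities to obtain the coercivity estimate for $\eta_1$, and then pair the equation with $\eta_1$ to conclude. The paper omits the technical details you spell out (how the boundary traces in $\partial_y$ align with $\cB_d(i\xi+\nu\partial_y)\eta_1$ and how the sub-case $\xi=0$ is handled via $L_2$-integrability), but the structure and the key ideas are identical.
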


\begin{proof}
We use the same strategy as in the proof of the preceding proposition, however now  the differential operator $\nabla$ is being replaced by $i\xi+\nu\partial_y$. The splitting $\eta=\eta_1+\eta_2$ with $\eta_1=P_d\eta$ and integration by parts with respect to the variable $y>0$ yields
$$\real\lambda\|\eta_2\|_{L_2(\RR_+)}^2+2k_3(|\xi|^2\|\eta_2\|_{L_2(\RR_+)}^2+\|\partial_y\eta_2\|_{L_2(\RR_+)}^2)=0,$$
which shows that $\eta_2=(I-P_d)\eta=0$, hence $\eta=\eta_1=P_d\eta$. Taking the inner product of $-\cA_d(i\xi+\nu\partial_y)\eta_1$ with $\eta_1$ in $L_2(\RR_+)^3$ and integrating by parts with respect to $y>0$, implies the desired estimate. In fact, the technical steps from the proof of Proposition \ref{prop:LSC0} can be mimicked and are therefore omitted.
Finally, taking the inner product of the equation $\lambda\eta_1-\cA_d(i\xi+\nu\partial_y)\eta_1=0$ with $\eta_1$ in $L_2(\RR_+;\CC^3)$ yields $\eta_1=0$, hence $\eta=0$.
\end{proof}

\begin{remark}\label{rem:LS}{\rm
Since the state space $\CC^3$ of $\eta$ in Proposition \ref{pro:LopShap_d} is finite dimensional, strong ellipticity of $-\cA_d(\nabla)$ implies that the Lopatinskii-Shapiro condition is equivalent to the fact that for each $g\in\CC^3$, the problem
$$\lambda\eta-\cA_d(i\xi+\nu\partial_y)\eta=0,\quad y>0,\quad \cB_d(i\xi+\nu\partial_y) \eta=g,\quad y=0$$
has a unique solution $\eta\in H_q^2(\RR_+;\CC^3)$ for \emph{any} $1<q<\infty$. We refer to  \cite[Remark 6.2.2 (iv) \& Section 6.2.2]{PrSi16} for details.
}
\end{remark}

\section{Linearized problems and maximal regularity}\label{sec:LinMaxReg}

\noindent
We start by considering  a linearized problem for the function $d$, which reads as
\begin{equation}\label{eq:lin_d}
\left\{\begin{aligned}
\gamma\partial_t d-\mathcal{A}_{\tilde{d}}(\nabla)d &= f &&\text{in}&&(0,T)\times\Omega ,\\
\mathcal{B}_{\tilde{d}}(\nabla)d   &=g &&\text{in}&&(0,T)\times\partial\Omega, \\
d  &=d_0 &&\text{in}&&\{0\}\times\Omega,
\end{aligned}\right.
\end{equation}
where $\mathcal{A}_{\tilde{d}}(\nabla)d$ and $\mathcal{B}_{\tilde{d}}(\nabla)d$ are defined as in Section \ref{sec:FAsett}.

\begin{proposition}\label{thm:dEq}
Let  $1<p<\infty$, $\mu \in (\frac12 + \frac{5}{2p},1]$, $\gamma>0$ and assume (F). Suppose that $T>0$, $\Omega\in\{\RR^3,\RR^3_+\}$ and let $\tilde{d}\in \RR^3$ with $|\tilde{d}|_2=1$.
Then, for any $T>0$, the problem
\eqref{eq:lin_d}
admits a unique solution
$$d\in H_{p,\mu}^1((0,T);H_p^1(\Omega;\RR^3))\cap L_{p,\mu}((0,T);H_p^{3}(\Omega;\RR^3)),$$
if and only if the data are subject to the following conditions.
\begin{enumerate}
\item $f\in L_{p,\mu}((0,T);H_p^1(\Omega;\RR^3))$,
\item $g\in  W_{p,\mu}^{1-1/2p}((0,T);L_p(\partial\Omega;\RR^3))\cap L_{p,\mu}((0,T);W_{p}^{2-1/p}(\partial\Omega;\RR^3))$,
\item $d_0\in W_{p}^{2\mu+1-2/p}(\Omega;\RR^3)$
\item $\mathcal{B}_{\tilde{d}}(\nabla)d_0=g(t=0)$.
\end{enumerate}
\end{proposition}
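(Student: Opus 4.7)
The plan is to apply the general maximal $L_{p,\mu}$-regularity theory for second-order parabolic boundary-value problems developed in \cite{PrSi16} and its weighted extensions, using the two structural inputs already established in Section 5: strong ellipticity of $-\mathcal{A}_{\tilde{d}}(\nabla)$ (Proposition \ref{prop:elliptic}) and the Lopatinskii-Shapiro condition for the pair $(\mathcal{A}_{\tilde{d}}(\nabla), \mathcal{B}_{\tilde{d}}(\nabla))$ (Proposition \ref{pro:LopShap_d}). Since $\tilde{d} \in \RR^3$ is a constant unit vector, the operator pair has constant coefficients, so that operator-valued Fourier multiplier methods are directly applicable on $\RR^3$ and, after tangential Fourier and temporal Laplace transform, on $\RR^3_+$.

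Necessity of (1)--(4) is routine. From $d \in H_{p,\mu}^1((0,T);H_p^1) \cap L_{p,\mu}((0,T);H_p^3)$ one obtains $\mathcal{A}_{\tilde{d}}(\nabla) d \in L_{p,\mu}((0,T);H_p^1)$, yielding (1); the weighted time-trace theorem gives $d(0) \in (H_p^1,H_p^3)_{\mu - 1/p,p} = W_p^{2\mu + 1 - 2/p}(\Omega)$ (the exponent being non-integer by \eqref{eq:Assum_for_p}), yielding (3); and the space-time trace theorem applied to $\nabla d \in H_{p,\mu}^1((0,T);L_p) \cap L_{p,\mu}((0,T);H_p^2)$ produces (2), with (4) following by continuity of the time trace at $t = 0$. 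For sufficiency I would reduce to the fully homogeneous problem in three steps: extend $d_0$ to an element of the solution class via a right-inverse of the time trace, reducing to $d_0 = 0$; construct a boundary lifting of $g$ by solving the parameter-dependent half-space symbol, reducing to $g = 0$; and then solve the Cauchy problem with $f$ alone, which amounts to showing that the realization of $-\mathcal{A}_{\tilde{d}}(\nabla)$ on $H_p^1(\Omega;\RR^3)$ with domain $\{d \in H_p^3(\Omega;\RR^3) : \mathcal{B}_{\tilde{d}}(\nabla) d = 0 \text{ on } \partial\Omega\}$ is $R$-sectorial with $R$-angle less than $\pi/2$.

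The principal technical step is the resolvent analysis. On $\RR^3$, strong ellipticity of the symbol $m_{\tilde{d}}(\xi)$ together with an operator-valued Mikhlin theorem yields $R$-boundedness of the family $\{\lambda(\lambda\gamma + m_{\tilde{d}}(\xi))^{-1} : \real\lambda \ge 0,\ \xi \in \RR^3\}$ (extended to a sector of opening $> \pi/2$). On $\RR^3_+$, a Laplace-tangential-Fourier transform reduces the problem to an ODE in the normal variable parametrized by $(\lambda, \xi_\parallel)$; the Lopatinskii-Shapiro condition (Proposition \ref{pro:LopShap_d}) together with Remark \ref{rem:LS} ensures pointwise unique solvability, and the a priori estimate therein supplies the uniform symbol bounds required for an operator-valued Mikhlin argument in the tangential variables. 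The principal obstacle I anticipate is that the base space here is $H_p^1$ rather than the more customary $L_p$. I would handle this by exploiting the constant-coefficient structure: tangential derivatives commute with both $\mathcal{A}_{\tilde{d}}(\nabla)$ and $\mathcal{B}_{\tilde{d}}(\nabla)$, so applying $\partial_{x_j}$ to the equation and to the boundary condition reduces the tangential part of the $H_p^1$-regularity to the standard $L_p$-base result of \cite{PrSi16}, while the remaining normal $H_p^1$-regularity is recovered from the equation itself by solving for the highest-order normal derivative, using strong ellipticity of $\mathcal{A}_{\tilde{d}}(\nabla)$ in the normal direction. Passage to the weighted $L_{p,\mu}$-setting in time then follows automatically from $R$-sectoriality via the general transference results of Prüss-Simonett.
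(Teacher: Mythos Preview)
Your proposal is correct and follows exactly the same strategy as the paper: invoke the general maximal $L_{p,\mu}$-regularity theory of \cite{PrSi16} for second-order parabolic boundary value problems, with Propositions~\ref{prop:elliptic} and~\ref{pro:LopShap_d} supplying the required strong ellipticity and Lopatinskii--Shapiro condition. The paper's own proof is in fact a one-line citation to \cite[Section~6.1.5~(i) \& Proof of Theorem~6.3.3]{PrSi16}, where Section~6.1.5~(i) handles precisely the $H_p^1$-base-space lifting you identify as the principal technical obstacle.
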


\begin{proof}
The assertion follows from \cite[Section 6.1.5 (i) \& Proof of Theorem 6.3.3]{PrSi16}, since $-\mathcal{A}_{\tilde{d}}(\nabla)$ is strongly elliptic by Proposition \ref{prop:elliptic} and since the pair $(\mathcal{A}_{\tilde{d}}(\nabla),\mathcal{B}_{\tilde{d}}(\nabla))$ satisfies the Lopatinskii-Shapiro condition by Proposition \ref{pro:LopShap_d}.
\end{proof}

Next, we linearize the problem \eqref{eq:iEL2} for $(u,d)$ at some given
$$
\tilde{z}=(\tilde{u},\tilde{d})\in W_{p}^{2\mu-2/p}(\Omega;\RR^3)\times W_{p}^{2\mu+1-2/p}(\Omega;\RR^3),
$$
and drop all terms of lower order. This yields the \emph{principal linearization} of equation  \eqref{eq:iEL2}
\begin{equation}\label{eq:princLin}
\left\{\begin{aligned}
\rho\partial_tu+A_u(\tilde{z})u +R_1^{\sf T}(\tilde{z})\partial_t d+\nabla \pi &=f_u&&\text{in}&&(0,T)\times\Omega ,\\
-R_0(\tilde{z})u+\gamma(\tilde{z})\partial_td+A_d(\tilde{z})d  &=f_d&&\text{in}&&(0,T)\times\Omega ,\\
\diver u &=0&&\text{in}&&(0,T)\times\Omega ,\\
u   &=0 &&\text{in}&&(0,T)\times\partial\Omega, \\
\mathcal{B}_{\tilde{d}}(\nabla)d   &=g &&\text{in}&&(0,T)\times\partial\Omega, \\
(u,d)  &=(u_0,d_0) &&\text{in}&&\{0\}\times\Omega.
\end{aligned}\right.
\end{equation}
For the system \eqref{eq:princLin}, we prove the following maximal regularity result.

\begin{theorem}\label{thm:MaxRegLin}
Let  $1<p<\infty$, $\mu \in (\frac12 + \frac{5}{2p},1]$ and assume (F),(P) and (R). Let $\Omega\subset\R^3$ be a bounded domain with boundary $\partial\Omega\in C^{3}$ and let
$$\tilde{z}=(\tilde{u},\tilde{d})\in W_{p}^{2\mu-2/p}(\Omega;\RR^3)\times W_{p}^{2\mu+1-2/p}(\Omega;\RR^3),$$
with $|\tilde{d}(x)|_2=1$, $x\in{\Omega}$.
Then, for any $T\in (0,\infty)$, the problem \eqref{eq:princLin} admits a unique solution
\begin{align*}
  u&\in H_{p,\mu}^1((0,T);L_p(\Omega;\RR^3))\cap L_{p,\mu}((0,T);H_p^{2}(\Omega;\RR^3)), \\
\nabla\pi &\in L_{p,\mu}((0,T);L_p(\Omega;\RR^3)),\\
d &\in H_{p,\mu}^1((0,T);H_p^1(\Omega;\RR^3))\cap L_{p,\mu}((0,T);H_p^{3}(\Omega;\RR^3)),
\end{align*}
if and only if the data are subject to the following conditions.
\begin{enumerate}
\item $f_u\in L_{p,\mu}((0,T);L_p(\Omega;\RR^3))$,
\item $f_d\in L_{p,\mu}((0,T);H_p^1(\Omega;\RR^3))$,
\item $g\in  W_{p,\mu}^{1-1/2p}((0,T);L_p(\partial\Omega;\RR^3))\cap L_{p,\mu}((0,T);W_{p}^{2-1/p}(\partial\Omega;\RR^3))$,
\item $u_0\in W_{p}^{2\mu-2/p}(\Omega;\RR^3)$,
\item $\diver u_0=0$,
\item $u_0=0$ on $\partial\Omega$,
\item $d_0\in W_{p}^{2\mu+1-2/p}(\Omega;\RR^3)$,
\item $\mathcal{B}_{\tilde{d}}(\nabla)d_0=g(t=0)$ on $\partial\Omega$.
\end{enumerate}
\end{theorem}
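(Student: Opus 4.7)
The plan is to prove Theorem \ref{thm:MaxRegLin} along the standard route for maximal regularity of parabolic boundary value problems: necessity via trace theory, sufficiency by reduction to constant-coefficient model problems on $\RR^3$ and $\RR^3_+$ and decoupling via a Schur complement, followed by passage to the bounded $C^3$-domain $\Omega$ through flattening, localization, and perturbation.

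Necessity of the data conditions (1)--(8) follows from the mixed-derivative theorem for the time-weighted intersection spaces $H_{p,\mu}^1(J;X)\cap L_{p,\mu}(J;Y)$, combined with real interpolation identities that determine the trace spaces at $t=0$ and on $(0,T)\times\partial\Omega$; the compatibility relations (5), (6), (8) follow from continuity in time with values in these trace spaces. For sufficiency I first reduce to vanishing initial and boundary data. Proposition \ref{thm:dEq} produces a $d_\star$ solving the decoupled $d$-problem for the data $(f_d,g,d_0)$, while classical $L_{p,\mu}$-maximal regularity for the Stokes operator with no-slip conditions (after solenoidally extending $u_0$) produces a $u_\star$ absorbing $u_0$. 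Subtracting $(u_\star,d_\star)$, the coupling terms $R_0 u_\star$ and $R_1^{\sf T}\partial_t d_\star$ are absorbed into the right-hand sides and remain in the admissible data class.

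On the model domains $\RR^3$ (interior charts) or $\RR^3_+$ (boundary charts, after flattening by the $C^3$-regularity of $\partial\Omega$), I freeze $\tilde z$ at a point, apply Laplace–Fourier transform in $(t,x)$ (resp.\ $(t,x')$), and eliminate the pressure via the Helmholtz projection $\PP$ (resp.\ via a resolved Neumann problem). This yields the block system
\begin{equation*}
\begin{pmatrix} \rho\lambda + \PP A_u(\tilde z) & \lambda\,\PP R_1^{\sf T}(\tilde z) \\ -R_0(\tilde z) & \gamma(\tilde z)\lambda + A_d(\tilde z) \end{pmatrix}
\begin{pmatrix} \hat u \\ \hat d \end{pmatrix}
= \begin{pmatrix} \PP \hat f_u \\ \hat f_d \end{pmatrix}.
\end{equation*}
The $(2,2)$-block is invertible with full $L_{p,\mu}$-maximal regularity by Proposition \ref{thm:dEq}, which rests on Proposition \ref{prop:elliptic} (strong ellipticity of $-A_d(\tilde z)$) and Proposition \ref{pro:LopShap_d} (Lopatinskii–Shapiro for the $d$-boundary problem). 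Taking the Schur complement with respect to this block reduces the system to a single equation for $u$ whose principal symbol is the Stokes symbol plus a correction of the form $\lambda\,\PP R_1^{\sf T}(\tilde z)\bigl(\gamma(\tilde z)\lambda + A_d(\tilde z)\bigr)^{-1}R_0(\tilde z)$. Since $R_0,R_1$ are only first-order operators while $A_d$ is second-order with $\mathcal{R}$-bounded inverse on the parabolic scale, this correction is a lower-order, $\mathcal{R}$-bounded perturbation of the Stokes operator, and operator-valued Fourier multiplier theorems of Mikhlin/Weis type deliver maximal $L_{p,\mu}$-regularity for $u$. Proposition \ref{thm:dEq} then recovers $d$ with its optimal regularity.

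Passage to $\Omega$ with variable coefficients $\tilde z(\cdot)$ is a routine localization: the embedding $W_p^{2\mu-2/p}\times W_p^{2\mu+1-2/p}\hookrightarrow C^1\times C^2$, valid under the standing assumption $\mu>\tfrac12+\tfrac{5}{2p}$, ensures that the coefficient oscillation on sufficiently small charts is arbitrarily small. A finite-cover Neumann-series argument, combined with partition-of-unity commutator estimates (of lower order relative to the principal operators), yields maximal regularity on a short time interval, extended to $[0,T]$ by an exponential shift and finite iteration. \emph{The main obstacle} is the Schur-complement step: one must verify that the reduced $u$-symbol inherits $\mathcal{R}$-sectoriality despite the inversion of the third-order block $A_d$, and — in the half-space case — that the fully coupled $(u,d)$-boundary value problem satisfies a joint Lopatinskii–Shapiro condition. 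The $d$-part is covered by Proposition \ref{pro:LopShap_d}, but the interaction of the coupling terms $R_0,R_1$ with the no-slip condition and the pressure gradient requires a separate symbol-level verification, in line with the authors' remark that the proof blends operator-valued Fourier multipliers with Schur complements.
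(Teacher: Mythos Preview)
Your overall architecture --- reduction to homogeneous data, constant-coefficient model problems on $\RR^3$ and $\RR^3_+$, Schur complement to decouple $u$ from $d$, then localization --- matches the paper's proof. But the central step of your argument contains a genuine gap.

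You assert that the Schur correction $\lambda\,R_1^{\sf T}(\gamma\lambda+A_d)^{-1}R_0$ is a \emph{lower-order} perturbation of the Stokes symbol because $R_0,R_1$ are first-order and $A_d$ is second-order. This order count is wrong. On the parabolic scale $|\lambda|^{1/2}+|\xi|$, the correction has weight $|\lambda|\cdot|\xi|\cdot(|\lambda|+|\xi|^2)^{-1}\cdot|\xi|$, which at $|\lambda|\sim|\xi|^2$ equals $|\xi|^2$ --- exactly the order of $A_u$. So the correction is \emph{not} a relatively compact or lower-order perturbation, and a Neumann-series argument based on smallness of the perturbation will not close.

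The paper resolves this by a different mechanism. It shows directly, using strong ellipticity of $-\cA_{\tilde d}$ (Proposition~\ref{prop:elliptic}), the sign conditions in (P), and a square-completion for the $R_\mu$-terms, that the full parabolic block $\cL^0$ satisfies an \emph{accretivity} estimate
\[
\real(\cL^0 v\mid Jv)\ \ge\ (\rho\,\real\lambda+\mu_s|\xi|^2)\,|u|^2,\qquad J(u,d)=(u,\lambda d).
\]
The key observation is then that Schur reduction \emph{preserves positive semidefiniteness}: the Schur complement $M(\lambda,i\xi)=M_u-\lambda R_1^{\sf T}M_d^{-1}R_0$ inherits $\real(Mu\mid u)\ge(\rho\,\real\lambda+\mu_s|\xi|^2)|u|^2$, which is precisely what the generalized-Stokes theory in \cite[Section~7.1]{PrSi16} requires. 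No perturbative smallness is invoked.

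For the half-space you correctly flag the coupled Lopatinskii--Shapiro condition as an obstacle but give no argument. The paper verifies it by an energy test: multiply the $u$-equation by $\bar u$ and the $d$-equation by $\bar\lambda\bar d_1$ (with $d_1=P_{\tilde d}d$), integrate by parts in $y$, and use the quantitative estimate of Proposition~\ref{pro:LopShap_d}, namely $-(\cA_{\tilde d}(i\xi+\nu\partial_y)d_1\mid d_1)_{L_2}\ge\alpha(|\xi|^2\|d_1\|^2+\|\partial_y d_1\|^2)$, together with the accretivity structure above. This forces $u=0$, then $d=0$. Without this explicit estimate --- which is why Proposition~\ref{pro:LopShap_d} is stated with the lower bound and not merely as a uniqueness statement --- the coupled LS verification does not go through.
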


\begin{proof}
The proof is subdivided into several steps.

\vspace{.1cm}
\noindent
\emph{Step1:} Let $\Omega=\RR^3$ and consider the case of constant coefficients, i.e., we linearize at a constant vector $\tilde{z}=(\tilde{u},\tilde{d})\in\RR^3\times\RR^3$
such that $|\tilde{d}|_2=1$. By Proposition \ref{thm:dEq} for $\Omega=\RR^3$ and \cite[Theorem 7.1.1]{PrSi16}, we may first reduce \eqref{eq:princLin} to the case $(u_0,d_0)=(0,0)$.  Note that the inhomogeneities $(f_u,f_d)$ then have to replaced by some modified data in the right regularity classes.

Denoting by $\xi$ the Fourier variable in space and by $\lambda$ the Laplace variable in time, the Laplace-Fourier symbol of the differential operator defined by the left side of $\eqref{eq:princLin}_1-\eqref{eq:princLin}_3$ is given by
\begin{equation*}\label{princ-symb}
\cL_{(\tilde{u},\tilde{d})}(\lambda,i\xi) = \left( \begin{array}{cccc}
{M_u(\lambda,\xi)}&i\xi^{\sf T}  &  {i\lambda R_1(\xi)^{\sf T}}\\
i\xi & 0&0\\
{-iR_0(\xi)}&0& M_{d}(\lambda,\xi)
\end{array}\right),
\end{equation*}
where $iR_j(\xi)$ are the Fourier-symbols of the first order differential operators $R_j(\tilde{z})$, $M_u(\lambda,\xi)$ is the Laplace-Fourier-symbol of
$$
\rho\partial_t+A_u(\tilde{z}),
$$
and $M_d(\lambda,\xi)$ is the Laplace-Fourier-symbol of
$$
\gamma(\tilde{z})\partial_t+A_d(\tilde{z})=\gamma(\tilde{z})\partial_t-\cA_{\tilde{d}}(\nabla).
$$
In particular, it holds that
\begin{align*}
M_{{d}}(\lambda,\xi) &= \gamma \lambda I+ m_{\tilde{d}}(\xi),\\
 m_{\tilde{d}}(\xi) &= 2k_3|\xi|^2I+2(k_1-k_3)(I-\tilde{d}\times \tilde{d})\xi\otimes\xi+2(k_2-k_3)(\tilde{d}\times\xi)\otimes (\tilde{d}\times\xi),\\
 R_0(\xi)&= \frac{\mu_D +\mu_V}{2} P_{\tilde{d}}\xi\otimes \tilde{d} + \frac{\mu_D -\mu_V}{2}(\xi| \tilde{d}) P_{\tilde{d}},\\
 R_1(\xi)& = R_\mu(\xi) -R_0(\xi),\\
 R_\mu(\xi)&= \mu_+P_{\tilde{d}}\xi\otimes \tilde{d}+\mu_-(\xi|\tilde{d}) P_{\tilde{d}},\\
  M_u(\lambda,\xi)&= m_u(\lambda,\xi)I + \mu_0(\xi|\tilde{d})^2\tilde{d}\otimes \tilde{d} + \frac{\mu_L}{4} R(\xi)^{\sf T}R(\xi) +
                    \frac{1}{4\gamma} R_\mu(\xi)^{\sf T}R_\mu (\xi)+ \frac{\mu_P\mu_V}{2\gamma}(\xi|\tilde{d})(R(\xi)-R^{\sf T}(\xi)),\\
R(\xi)&=  P_{\tilde{d}}\xi\otimes \tilde{d}+(\xi|\tilde{d}) P_{\tilde{d}},\\
m_u(\lambda,\xi) &=\rho \lambda +\mu_s|\xi|^2,
\end{align*}
where $P_{\tilde{d}}=I-\tilde{d}\otimes \tilde{d}$ and  $\mu_\pm = \mu_D \pm \mu_V+\mu_P$. Let us remark that, apart from $M_d(\lambda,\xi)$, the definitions of the above symbols
coincide with those in Section 5 of \cite{HiPr17} or \cite{HiPr19} .

For the time being consider, the purely parabolic part $\mathcal{L}^0$ of the symbol $\mathcal{L}$
\begin{equation*}\label{princ-symb_parab}
\cL_{(\tilde{u},\tilde{d})}^0(z,i\xi) = \left(\begin{array}{cccc}
{M_u(\lambda,\xi)}&  {i\lambda R_1(\xi)^{\sf T}}\\
{-iR_0(\xi)}& M_{{d}}(\lambda,\xi)
\end{array}\right),
\end{equation*}
which results from $\mathcal{L}_{(\tilde{u},\tilde{d})}$ by dropping the pressure gradient and divergence equation.
For $J(u,d):=(u,\lambda d)$ and $v:=(u,d)$, a computation shows that
\begin{align*}
  \real(\mathcal{L}_{(\tilde{u},\tilde{d})}^0v|Jv)&=\real m_u(\lambda,\xi) |u|_2^2 + \mu_0(\xi|\tilde{d})^2|(\tilde{d}|u)|^2 +\frac{\mu_L}{4} |Ru|_2^2 \\
 & \quad  + \frac{1}{4\gamma} |R_\mu u|_2^2   + \real[ i \lambda (d|R_\mu u)] + \gamma|\lambda|^2|d|_2^2+\real \lambda(m_{\tilde{d}}(\xi)d|d).
\end{align*}
By strong ellipticity of $-\cA_{\tilde{d}}(\nabla)$, see \eqref{eq:normell}, we obtain
$$(m_{\tilde{d}}(\xi)d|d)\ge c|\xi|_2^2|d|_2^2$$
for some constant $c>0$. Furthermore,
\begin{align*}
\frac{1}{4\gamma} |R_\mu u|_2^2   + \real[ i \lambda (d|R_\mu u)] + \gamma|\lambda|^2|d|_2^2&\ge
\frac{1}{4\gamma} |R_\mu u|_2^2   -|\lambda| |d|_2|R_\mu u|_2 + \gamma|\lambda|^2|d|_2^2\\
&=\big(\sqrt{\gamma}|\lambda||d|_2-\frac{1}{2\sqrt{\gamma}}|R_\mu u|_2\big)^2,
\end{align*}
by the Cauchy-Schwarz inequality. Assumption (P) then yields the estimate
\begin{equation}\label{eq:AccretEst}
\real(\mathcal{L}_{(\tilde{u},\tilde{d})}^0v|Jv)\ge (\rho\real \lambda+\mu_s|\xi|_2^2)|u|_2^2,
\end{equation}
provided $\real\lambda\ge 0$. Next, we consider  the equation
$$-iR_0(\xi)u+M_{{d}}(\lambda,\xi)d=f_d$$
and solve it for $d$. Let us note that for $\real \lambda\ge 0$ and $(\lambda,\xi)\neq (0,0)$, the matrix $M_{{d}}(\lambda,\xi)$ is invertible by \eqref{eq:normell}. Therefore, we obtain
$$d=M_{{d}}(\lambda,\xi)^{-1}\left(f_d+iR_0(\xi)u\right).$$
For $f_d=0$, this yields the \emph{Schur complement}
$$M(\lambda,i\xi):=M_u(\lambda,\xi)-\lambda R_1(\xi)^{\sf T}M_{{d}}(\lambda,\xi)^{-1}R_0(\xi)$$
for $u$ and \eqref{eq:AccretEst} implies the estimate
$$\real (M(\lambda,i\xi)u|u)\ge (\rho\real \lambda+\mu_s|\xi|_2^2)|u|_2^2,$$
since Schur reduction preserves positive (semi) definiteness.
Therefore, we are in a position to apply the techniques from \cite[Section 7.1]{PrSi16} to prove maximal $L_p$-regularity of the corresponding generalized Stokes problem for $u$ in the full space $\RR^3$. In fact, in
\cite[Section 7.1]{PrSi16} one has to replace $\lambda+\mathcal{A}(\xi)$ by  $M(\lambda,i\xi)$.

Having a unique solution $u$ of the generalized Stokes equation in its optimal regularity class, it follows that
$$R_0(\tilde{z})u\in L_{p,\mu}((0,T);H_p^1(\RR^3;\RR^3)),$$
since $R_0(\tilde{z})$ is a first order differential operator. Hence, solving the equation for $d$ by Proposition \ref{thm:dEq} we  obtain a unique solution $d$ which belongs to its
optimal regularity class. This completes the proof for the case $\Omega=\RR^3$.

\vspace{.1cm}
\noindent
\emph{Step 2:} Let $\Omega=\RR^3_+$ and consider the case of constant coefficients, i.e. we linearize at a constant vector $\tilde{z}=(\tilde{u},\tilde{d})\in\RR^3\times\RR^3$ such
that $|\tilde{d}|_2=1$. We first reduce \eqref{eq:princLin} to the case $(u_0,d_0,g)=(0,0,0)$, by applying Proposition \ref{thm:dEq} for the case $\Omega=\RR^3_+$ and \cite[Theorem 7.2.1]{PrSi16} for the
case of no-slip boundary conditions.

In the half space $\RR_+^3$, we replace the spatial co-variable $\xi$ by $\xi-i\nu\partial_y$, where $y>0$ and $\xi\perp\nu$. As in Step 1, we extract the Schur complement of $u$.
To this end, we consider the differential operator
\begin{align*}
M_{{d}}(\lambda,\xi-i\nu\partial_y)d&=\gamma \lambda d+m_{\tilde{d}}(\xi-i\nu\partial_y)d =\gamma \lambda d-\mathcal{A}_{\tilde{d}}(i\xi+\nu\partial_y)d,
\end{align*}
supplemented with the homogeneous boundary condition
$$
\mathcal{B}_{\tilde{d}}(i\xi+\nu\partial_y)d=0\ \text{for $y=0$}.
$$
We claim that for $\real \lambda\ge 0$, $(\lambda,\xi)\neq (0,0)$ with $\xi\perp\nu$, the operator $M_d$ is invertible. Indeed, consider the equation
$$
M_{{d}}(\lambda,\xi-i\nu\partial_y)\eta=f
$$
for given $f$. In a first step, we extend $f$ from $\R_+$ to a function $\tilde{f}$ on $\R$ and solve the full space problem
$$
\gamma \lambda \tilde{\eta}-\mathcal{A}_{\tilde{d}}(i\xi+\nu\partial_y)\tilde{\eta}=\tilde{f},\quad y\in\RR,
$$
by the classical Mihklin multiplier theorem with respect to  the variable $y$ and with the help of \eqref{eq:normell}. This yields a unique solution $\tilde{\eta}$.
Next, we consider the boundary value problem
\begin{equation}
\left\{\begin{aligned}
\gamma \lambda \hat{\eta}-\mathcal{A}_{\tilde{d}}(i\xi+\nu\partial_y)\hat{\eta}&=0&&y>0 ,\\
\mathcal{B}_{\tilde{d}}(i\xi+\nu\partial_y)\hat{\eta} &=h &&y=0,
\end{aligned}\right.
\end{equation}
with $h:=-\mathcal{B}_{\tilde{d}}(i\xi+\nu\partial_y)\tilde{\eta}$, to obtain a unique solution $\hat{\eta}$ by Remark \ref{rem:LS}. This shows that $M_d$ is invertible, which in turn implies that
$$d=M_{{d}}(\lambda,\xi-i\nu\partial_y)^{-1}(f_d+iR_0(\xi-i\nu\partial_y)u).$$
Therefore, the Schur complement of $u$ is given by
\begin{equation*}
M_u(\lambda,\xi-i\nu\partial_y)
-\lambda R_1(\xi-i\nu\partial_y)^{\sf T}M_{{d}}(\lambda,\xi-i\nu\partial_y)^{-1}R_0(\xi-i\nu\partial_y).
\end{equation*}

We will now show that the \emph{Lopatinskii-Shapiro condition} is satisfied. To be precise, this means that for all $\real \lambda\ge 0$, $(\lambda,\xi)\neq (0,0)$ with $\xi\perp\nu$, the problem
\begin{equation}\label{eq:LS2}
\left\{\begin{aligned}
M_u(\lambda,\xi-i\nu\partial_y)u+i\lambda R_1(\xi-i\nu\partial_y)^{\sf T}d&=0&&y>0 ,\\
-iR_0(\xi-i\nu\partial_y)u+M_{{d}}(\lambda,\xi-i\nu\partial_y)d&=0&&y>0,\\
u &=0 &&y=0,\\
\mathcal{B}_{\tilde{d}}(i\xi+\nu\partial_y)d &=0 &&y=0,
\end{aligned}\right.
\end{equation}
admits only the trivial solution $u=d=0$ in $L_2(\RR_+)$.
Let us split $d=d_1+d_2$, where $d_1=P_{\tilde{d}}d$ and $d_2=(I-P_{\tilde{d}})d$. Since
$$(I-P_{\tilde{d}})R_0(\xi-i\nu\partial_y)u=0,$$
by the definition of $R_0$, we may conclude from equation $\eqref{eq:LS2}_2$ that $d_2=0$. Indeed, this can be seen as in the proof of Proposition \ref{pro:LopShap_d}. Therefore, we may replace $d$ by $d_1=P_{\tilde{d}}d$ in \eqref{eq:LS2}. As in step 1, we will test $\eqref{eq:LS2}_1$ with $\bar{u}$ and $\eqref{eq:LS2}_2$ with $\bar{\lambda}\bar{d}_1$ and integrate by parts with respect to the
variable $y>0$. Assumption (P) then yields the estimate
\begin{equation*}
0\ge \real \lambda[\|u\|_{L_2(\RR_+)}^2+(m_{\tilde{d}}(\xi-i\nu\partial_y)d_1|d_1)_{L_2(\RR_+)}]\\
+|\lambda|^2\|d_1\|_{L_2(\RR_+)}^2+|\xi|_2^2\|u\|_{L_2(\RR_+)}^2+\|\partial_y u\|_{L_2(\RR_+)}^2.
\end{equation*}
Since, by Proposition \ref{pro:LopShap_d},
$$(m_{\tilde{d}}(\xi-i\nu\partial_y)d_1|d_1)_{L_2(\RR_+)}=-(\mathcal{A}_{\tilde{d}}(i\xi+\nu\partial_y)d_1|d_1)_{L_2(\RR_+)}\ge \alpha(|\xi|^2\|d_1\|_{L_2(\RR_+)}^2+\|\partial_yd_1\|_{L_2(\RR_+)}^2),$$
we may conclude that $\partial_y u=0$, hence $u=0$ as $u\in L_2(\RR_+)$. Inserting this information into $\eqref{eq:LS2}_2$, the boundary condition $\eqref{eq:LS2}_4$ and Proposition \ref{pro:LopShap_d} imply $d_1=0$. This shows that the Lopatinskii-Shapiro condition is satisfied.

We may now employ half-space theory for $u$ by the methods in \cite[Section 6]{BoPr07} or \cite[Section 2]{Pr18} or \cite[Section 7.2]{PrSi16} to prove maximal $L_p$-regularity for the half space $\RR_+^3$.
Having a unique solution $u$ in its optimal regularity class, it follows that
$$
R_0(\tilde{z})u\in L_{p,\mu}((0,T);H_p^1(\RR_+^3;\RR^3)).
$$
Hence, we solve the equation for $d$ by Proposition \ref{thm:dEq} to obtain a unique solution $d$ which belongs to its optimal regularity class. This completes the proof for the case $\Omega=\RR_+^3$.

\vspace{.1cm}
\noindent
\emph{Step 3:} The results of Steps 1 and  2 extend by perturbation arguments to a bent half-space and to the case of variable coefficients with small deviation from constant ones. We
then may apply a localization procedure to cover the case of general domains with boundary of class $C^3$ and variable coefficients. For details we refer at this point e.g. to
\cite[Sections 6.3 \& 7.3]{PrSi16}. This completes the proof of Theorem \ref{thm:MaxRegLin}.
\end{proof}

We will now  rewrite \eqref{eq:iEL2} in a more abstract form. To this end, let
$$
X_0=L_{p,\sigma}(\Omega;\RR^3)\times H_p^1(\Omega;\RR^3),
$$
where $L_{p,\sigma}(\Omega;\RR^3):=\mathbb{P}_HL_p(\Omega;\RR^3)$ and $\mathbb{P}_H$ denotes the Helmholtz projection. Furthermore, let
$$
X_1:=\{u\in H_{p,\sigma}^2(\Omega;\RR^3)\mid u=0\ \text{on}\ \partial\Omega\}\times H_p^3(\Omega;\RR^3),
$$
where $H_{p,\sigma}^2(\Omega;\RR^3):=H_{p}^2(\Omega;\RR^3)\cap L_{p,\sigma}(\Omega;\RR^3)$ and let
$$
X_{\gamma,\mu}:=(X_0,X_1)_{\mu-1/p,p}
$$
be the space of the initial data. In fact, it holds that
\begin{equation}\label{eq:X_gamma_mu}
X_{\gamma,\mu}=\{u\in W_{p,\sigma}^{2\mu-2/p}(\Omega;\RR^3)\mid u=0\ \text{on}\ \partial\Omega\}\times W_{p}^{2\mu+1-2/p}(\Omega;\RR^3).
\end{equation}

Observe next that
$$
X_{\gamma,\mu}\hookrightarrow C^{1}(\overline{\Omega};\RR^3)\times C^{2}(\overline{\Omega};\RR^3),
$$
by our assumption \eqref{eq:Assum_for_p} on $p$ and $\mu$.  Given any $\tilde{z}=(\tilde{u},\tilde{d})\in X_{\gamma,\mu}$, we define
\begin{equation}\label{eq:Def_sA}
\sA(\tilde{z}):=
\begin{pmatrix}
\frac{1}{\rho}\mathbb{P}_HA_u(\tilde{z})+\frac{1}{\rho\gamma(\tilde{z})}\mathbb{P}_HR_1(\tilde{z})^{\sf T}R_0(\tilde{z}) & -\frac{1}{\rho\gamma(\tilde{z})}\mathbb{P}_HR_1(\tilde{z})^{\sf T}A_d(\tilde{z})\\
-\frac{1}{\gamma(\tilde{z})}R_0(\tilde{z}) & \frac{1}{\gamma(\tilde{z})}A_d(\tilde{z})
\end{pmatrix}
\end{equation}
and, for sufficiently smooth $z=(u,d)$, we introduce, following \eqref{eq:BC2}, the boundary operator $\sB(\tilde{z})$ by
\begin{align}\label{eq:Def_sB}
\begin{split}
\sB(\tilde{z})z:=\cB_{\tilde{d}}(\nabla) d &=2k_3\nabla d\cdot \nu +2P_{\tilde{d}}(k_1\diver d\cdot I-k_3(\nabla d)^{\sf T})\cdot \nu\\
&+2(k_2-k_3)(\tilde{d}\cdot \curl d)(\tilde{d}\times\nu) +2(k_2+k_4)P_{\tilde{d}}((\nabla d)^{\sf T}-\diver d\cdot I)\cdot \nu,
\end{split}
\end{align}
Finally, let
\begin{equation}\label{eq:Def_sF}
\sF(z):=
\begin{pmatrix}
\frac{1}{\rho}\mathbb{P}_HF_u(z)-\frac{1}{\rho\gamma(\tilde{z})}\mathbb{P}_HR_1^{\sf T}(z)F_d(z)\\
\frac{1}{\gamma(\tilde{z})}F_d(z)
\end{pmatrix}.
\end{equation}
We note that the definition of $\sA(\tilde{z})$ as well as of $\sF(z)$ results from \eqref{eq:iEL2} by applying the Helmholtz projection to the first equation and  by substituting $\partial_t d$ from the second equation into the first equation of \eqref{eq:iEL2}.

With these definitions, system \eqref{eq:iEL2} can be rewritten as
\begin{equation}
\label{eq:iEL3}
\left\{\begin{aligned}
\partial_t z+\sA(z)z &=\sF(z)&&\text{in}&&(0,T)\times\Omega ,\\
\sB(z)z &=0&&\text{in}&&(0,T)\times\partial\Omega ,\\
z(0)&=z_0 &&\text{in}&&\{0\}\times\Omega.
\end{aligned}\right.
\end{equation}
For the sake of readability, for $0<T<\infty$, we further define $J_T=[0,T]$,
$$\mathbb{E}_{0,\mu}(J_T):=L_{p,\mu}(J_T;X_0),\quad \mathbb{E}_{1,\mu}(J_T):=H_{p,\mu}^1(J_T;X_0)\cap L_{p,\mu}(J_T;X_1)$$
and
$$\mathbb{F}_{\mu}(J_T):=W_{p,\mu}^{1-1/2p}(J_T;L_p(\partial\Omega;\RR^3))\cap L_{p,\mu}(J_T;W_{p}^{2-1/p}(\partial\Omega;\RR^3)).$$
Moreover, we set
\begin{equation}\label{eq:hatTrace}
\hat{X}_{\gamma,\mu}:=\{z=(u,d)\in X_{\gamma,\mu}\mid |d(x)|_2=1,\ x\in\Omega\}
\end{equation}
and, for $\tilde{z}\in X_{\gamma,\mu}$,
$$\mathbb{D}_\mu(\tilde{z},T):=\{(f,g,z_0)\in \mathbb{E}_{0,\mu}(J_T)\times \mathbb{F}_{\mu}(J_T)\times X_{\gamma,\mu}\mid \sB(\tilde{z})z_0=\tr_{t=0}g\}.$$
Then the  following result for the linearized system
\begin{equation}
\label{eq:princLin2}
\left\{\begin{aligned}
\partial_t z+\sA(\tilde{z})z &=f&&\text{in}&&(0,T)\times\Omega ,\\
\sB(\tilde{z})z &=g&&\text{in}&&(0,T)\times\partial\Omega ,\\
z(0)&=z_0 &&\text{in}&&\{0\}\times\Omega,
\end{aligned}\right.
\end{equation}
is a direct consequence of Theorem \ref{thm:MaxRegLin}.

\begin{corollary}\label{cor:MaxRegLin2}
Let the assumptions of Theorem \ref{thm:MaxRegLin} be satisfied. Then, for any $\tilde{z}\in \hat{X}_{\gamma,\mu}$ and all $(f,g,z_0)\in\mathbb{D}_{\mu}(\tilde{z},T)$, the linear problem \eqref{eq:princLin2} admits a unique solution $z\in\mathbb{E}_{1,\mu}(J_T)$.
\end{corollary}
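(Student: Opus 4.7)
The plan is to reduce the abstract system \eqref{eq:princLin2} to the concrete linearization \eqref{eq:princLin} and then invoke Theorem \ref{thm:MaxRegLin}. Recall that \eqref{eq:princLin2} was obtained from \eqref{eq:princLin} by applying the Helmholtz projection $\mathbb{P}_H$ to the momentum equation (which eliminates $\nabla\pi$) and then substituting $\partial_t d$ from the director equation into the first equation. To invert this passage at the level of the inhomogeneities, given $f = (f_1, f_2) \in \mathbb{E}_{0,\mu}(J_T)$, I would set
\[
f_u := \rho f_1 + R_1(\tilde z)^{\sf T} f_2, \qquad f_d := \gamma(\tilde z) f_2.
\]
Because $\tilde z \in X_{\gamma,\mu} \hookrightarrow C^1(\overline\Omega;\RR^3) \times C^2(\overline\Omega;\RR^3)$, because $R_1(\tilde z)^{\sf T}$ is a first-order differential operator with coefficients depending smoothly on $\tilde z$, and because $\gamma(\tilde z)$ is smooth and strictly positive, this gives $f_u \in L_{p,\mu}(J_T;L_p(\Omega;\RR^3))$ and $f_d \in L_{p,\mu}(J_T;H_p^1(\Omega;\RR^3))$. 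The characterization \eqref{eq:X_gamma_mu} of $X_{\gamma,\mu}$ supplies conditions (4)-(7) of Theorem \ref{thm:MaxRegLin}, while the compatibility identity $\sB(\tilde z)z_0 = \tr_{t=0} g$ built into $\mathbb{D}_\mu(\tilde z, T)$ is exactly $\cB_{\tilde d}(\nabla)d_0 = g|_{t=0}$, i.e.\ condition (8).

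For existence, I would then apply Theorem \ref{thm:MaxRegLin} to $(f_u, f_d, g, u_0, d_0)$ to obtain a unique $(u, d, \nabla\pi)$ of the required regularity. The conditions $\diver u = 0$ and $u|_{\partial\Omega} = 0$ upgrade $u$ to $L_{p,\mu}(J_T; H^2_{p,\sigma}(\Omega;\RR^3))$, so $(u, d) \in \mathbb{E}_{1,\mu}(J_T)$. Dividing \eqref{eq:princLin}$_1$ by $\rho$, applying $\mathbb{P}_H$, dividing \eqref{eq:princLin}$_2$ by $\gamma(\tilde z)$, and substituting $\partial_t d = \frac{1}{\gamma(\tilde z)}(R_0(\tilde z)u - A_d(\tilde z)d + f_d)$ into the projected first equation reproduces $\partial_t z + \sA(\tilde z)z = \sF$ with $\sF$ as in \eqref{eq:Def_sF}, the choices of $f_u, f_d$ being precisely tailored so that $[\sF]_1 = f_1$ and $[\sF]_2 = f_2$. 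The boundary and initial conditions transfer trivially.

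For uniqueness, suppose $z = (u, d)$ solves the homogeneous version of \eqref{eq:princLin2}. Since $u(t) \in L_{p,\sigma}(\Omega;\RR^3)$ for every $t$, its time derivative lies in the same closed subspace, so $(I - \mathbb{P}_H)\partial_t u = 0$. I would then define the candidate pressure by
\[
\nabla\pi := -(I - \mathbb{P}_H)\bigl(A_u(\tilde z)u + R_1(\tilde z)^{\sf T}\partial_t d\bigr) \in L_{p,\mu}(J_T; L_p(\Omega;\RR^3)),
\]
which is genuinely a gradient since it lies in the range of $I - \mathbb{P}_H$. Reversing the reduction then shows that $(u, d, \nabla\pi)$ solves \eqref{eq:princLin} with zero data: the $\mathbb{P}_H$-part of the reconstructed momentum equation reproduces the first component of the homogeneous abstract system (using $\gamma \partial_t d = R_0 u - A_d d$), and the $(I - \mathbb{P}_H)$-part vanishes by the very definition of $\nabla\pi$. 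The uniqueness clause of Theorem \ref{thm:MaxRegLin} then forces $(u, d, \nabla\pi) = 0$ and hence $z = 0$. The only step that requires real care is the bookkeeping of function-space regularities under the correspondence $(f_1, f_2) \leftrightarrow (f_u, f_d)$ and the reconstruction of the pressure gradient in the uniqueness argument; the substantive analytic content is already contained in Theorem \ref{thm:MaxRegLin}.
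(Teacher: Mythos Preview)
Your argument is correct and is precisely the reduction the paper has in mind: the paper simply records that Corollary~\ref{cor:MaxRegLin2} ``is a direct consequence of Theorem~\ref{thm:MaxRegLin}'', and you have written out the translation between \eqref{eq:princLin2} and \eqref{eq:princLin} (choice of $f_u,f_d$ from $(f_1,f_2)$, recovery of $\nabla\pi$ via $I-\mathbb{P}_H$ for uniqueness) that justifies this sentence. There is no alternative route here---your proof and the paper's intended one coincide.
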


In the situation of Corollary \ref{cor:MaxRegLin2}, for any $\tilde{z}\in \hat{X}_{\gamma,\mu}$, the mapping
$$L(\tilde{z})=(\partial_t+\sA(\tilde{z}),\sB(\tilde{z}),\tr_{t=0}):\mathbb{E}_{1,\mu}(J_T)\to \mathbb{D}_{\mu}(\tilde{z},T)$$
is linear, bounded and invertible. Denoting by
\begin{equation}\label{eq:SolOper}
S(\tilde{z})=L(\tilde{z})^{-1}
\end{equation}
the inverse operator, the open mapping theorem implies that
$S(\tilde{z}):\mathbb{D}_{\mu}(\tilde{z},T)\to \mathbb{E}_{1,\mu}(J_T)$
is bounded. Therefore, there exists a constant $C=C(T)>0$ such that the unique solution $z$ of \eqref{eq:princLin2} satisfies
\begin{equation}\label{eq:MRest}
\|z\|_{\mathbb{E}_{1,\mu}(J_T)}\le C\left(\|f\|_{\mathbb{E}_{0,\mu}(J_T)}+
\|g\|_{\mathbb{F}_{\mu}(J_T)}+\|z_0\|_{X_{\gamma,\mu}}\right).
\end{equation}
With the help of extension-restriction arguments one can prove that in case $z_0=0$, the constant $C=C(T)$ is uniform in $T\in (0,T_*]$ for some given and
fixed $T_*\in (0,\infty)$, see e.g. \cite[Proposition 4.1 (b)]{DuShaSi24}.

We remark that by assumptions (R) and \eqref{eq:Assum_for_p}, the mapping
$$X_{\gamma,\mu}\ni \tilde{z}\mapsto L(\tilde{z})\in \cL({_0}\mathbb{E}_{1,\nu}(J_T),\mathbb{E}_{0,\nu}(J_T)\times {_0}\mathbb{F}_{\nu}(J_T))$$
is continuous, where the lower left subscript 0 means that the trace at $t=0$ vanishes. Indeed, for $\sA$ continuity follows from a direct calculation and concerning $\sB$, one may use the fact that the space $\mathbb{F}_\mu(J_T)$ is a Banach algebra, cf. also \cite[Lemma B.1]{DuShaSi24}. Corollary \ref{cor:MaxRegLin2} then implies that the mapping
$$\hat{X}_{\gamma,\mu}\ni\tilde{z}\mapsto S(\tilde{z})\in \cL(\mathbb{E}_{0,\nu}(J_T)\times {_0}\mathbb{F}_{\nu}(J_T),{_0}\mathbb{E}_{1,\nu}(J_T))$$
is continuous, since $\hat{X}_{\gamma,\mu}$ is a subset of $X_{\gamma,\mu}$, see \eqref{eq:hatTrace}, and since inversion is smooth. Here the solution operator $S$ is defined in \eqref{eq:SolOper}, but restricted to trivial initial data.

Next, let us define nonlinear mappings $(\cA,\cB)$ by
$$
\cA(z)=\sA(z)z\quad\text{and}\quad\cB(z)=\sB(z)z,
$$
where  $(\sA,\sB)$ are given in  \eqref{eq:Def_sA} and \eqref{eq:Def_sB}. Let further $\sF$ be given as in  \eqref{eq:Def_sF}. Then the
functions $\cA$, $\cB$ and $\sF$ enjoy the following regularity properties.

\begin{lemma}\label{pro:Nonlin_Est}
Let  $1<p<\infty$, $\mu \in (\frac12 + \frac{5}{2p},1]$ and assume (P), (R). Then
$$
\cA,\sF\in C^1(\mathbb{E}_{1,\mu}(J_T);\mathbb{E}_{0,\mu}(J_T)),\ \cB\in C^1(\mathbb{E}_{1,\mu}(J_T);\mathbb{F}_{\mu}(J_T)),
$$
with
$$
\cA'(z_*)z=\sA(z_*)z+[\sA'(z_*)z]z_*,\ \cB'(z_*)z=\sB(z_*)z+[\sB'(z_*)z]z_*,
$$
where $z,z_*\in \mathbb{E}_{1,\mu}(J_T)$.

Moreover, given $T_0,M>0$, then for any $T\in (0,T_0]$ and any $z_*=(u_*,d_*)\in \mathbb{E}_{1,\mu}(J_T)$, $z\in \mathbb{E}_{1,\mu}(J_T)$ with $z(0)=0$ satisfying
$$\|\tr_{\partial\Omega}\nabla^jd_*\|_{\mathbb{F}_{\mu}(J_T)},\ \|z_*\|_{C(J_T;X_{\gamma,\mu})},\ \|z_*\|_{\mathbb{E}_{1,\mu}(J_T)},\ \|z\|_{\mathbb{E}_{1,\mu}(J_T)}\le M,$$
for $j\in\{0,1\}$, the estimate
$$\|\mathcal{H}(z_*+z)-\mathcal{H}(z_*)-\mathcal{H}'(z_*)z\|_{\mathbb{X}}\le \varepsilon(\|z\|_{\mathbb{E}_{1,\mu}(J_T)})\|z\|_{\mathbb{E}_{1,\mu}(J_T)}$$
holds for $(\mathcal{H},\mathbb{X})\in\{(\cA,\mathbb{E}_{0,\mu}(J_T)),(\sF,\mathbb{E}_{0,\mu}(J_T)),(\cB,\mathbb{F}_{\mu}(J_T))\}$. Here $\varepsilon:\RR_+\to\RR_+$ is continuous such that $\varepsilon(r)\to 0$ as $r\to 0$.

If in addition $\bar{z}=(\bar{u},\bar{d})\in \mathbb{E}_{1,\mu}(J_T)$ with $\bar{z}(0)=z_*(0)$ satisfies
$$\|\tr_{\partial\Omega}\nabla^j\bar{d}\|_{\mathbb{F}_{\mu}(J_T)},\ \|\bar{z}\|_{C(J_T;X_{\gamma,\mu})},\ \|\bar{z}\|_{\mathbb{E}_{1,\mu}(J_T)}\le M,$$
for $j\in\{0,1\}$, then the following estimate holds
$$\|\mathcal{H}'(z_*)z-\mathcal{H}'(\bar{z})z\|_{\mathbb{X}}\le \varepsilon(\|z_*-\bar{z}\|_{\mathbb{E}_{1,\mu}(J_T)})\|z\|_{\mathbb{E}_{1,\mu}(J_T)},$$
where the choice of $(\mathcal{H},\mathbb{X})$ is as above.
\end{lemma}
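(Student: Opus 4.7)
The plan is to exploit the quasilinear structure of the three nonlinearities. Writing $\cA(z)=\sA(z)z$ and $\cB(z)=\sB(z)z$, the coefficient maps $\sA(\cdot), \sB(\cdot), \sF(\cdot)$ are composed of smooth (by assumption (R) and the explicit formulas for $\mathcal{A}_d$, $\mathcal{B}_d$) functions of $d$ and $\nabla d$, acting multiplicatively on components of $z$ and its derivatives. Because of the standing assumption $\mu>\frac{1}{2}+\frac{5}{2p}$, we have $X_{\gamma,\mu}\hookrightarrow C^1(\bar\Omega;\R^3)\times C^2(\bar\Omega;\R^3)$ and hence $\mathbb{E}_{1,\mu}(J_T)\hookrightarrow C(J_T;C^1\times C^2)$. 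In particular, all traces on $\partial\Omega$ of $d$ and $\nabla d$ live in the multiplier algebra of $\mathbb{F}_\mu(J_T)$, which is itself a Banach algebra for our choice of $p,\mu$ (cf.\ the reference to \cite[Lemma B.1]{DuShaSi24} already invoked in the excerpt). The first step is therefore to verify that, viewed componentwise, every coefficient of $\sA,\sB,\sF$ defines a smooth Nemytskii map into the appropriate multiplier space, and that pointwise multiplication
\begin{equation*}
\mathbb{E}_{1,\mu}(J_T)\times \mathbb{E}_{1,\mu}(J_T)\to \mathbb{E}_{0,\mu}(J_T),\qquad
\mathbb{F}_\mu(J_T)\times \mathbb{F}_\mu(J_T)\to \mathbb{F}_\mu(J_T)
\end{equation*}
is continuous. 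With this in hand the $C^1$ statements and the product-rule formulas for $\cA'(z_*)$ and $\cB'(z_*)$ follow from the standard calculus of smooth Nemytskii operators in Sobolev-Slobodeckii spaces.

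For the Taylor remainder estimate, I would write
\begin{equation*}
\cA(z_*+z)-\cA(z_*)-\cA'(z_*)z
=\bigl\{\sA(z_*+z)-\sA(z_*)-\sA'(z_*)z\bigr\}z_*+\bigl[\sA(z_*+z)-\sA(z_*)\bigr]z,
\end{equation*}
and analogously for $\cB$ and $\sF$. The first summand is $O(\|z\|_{\mathbb{E}_{1,\mu}}^2)$ by a one-variable Taylor expansion in the smooth coefficients of $\sA$, while the second is bounded by a Lipschitz constant times $\|z\|_{\mathbb{E}_{1,\mu}}^2$. Both terms gain a factor going to $0$ as $\|z\|_{\mathbb{E}_{1,\mu}}\to 0$. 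The uniform control of $\|z_*\|_{C(J_T;X_{\gamma,\mu})}$, $\|z_*\|_{\mathbb{E}_{1,\mu}}$ and of $\|\tr_{\partial\Omega}\nabla^j d_*\|_{\mathbb{F}_\mu}$ for $j\in\{0,1\}$ is exactly what is needed to absorb the prefactors in the multiplication estimates and to apply Taylor's theorem to coefficient functions on the compact image of $(d_*,\nabla d_*)$. The continuity of $z_*\mapsto \mathcal{H}'(z_*)$ follows from the identical decomposition
\begin{equation*}
\mathcal{H}'(z_*)z-\mathcal{H}'(\bar z)z=\bigl[\sA(z_*)-\sA(\bar z)\bigr]z+\bigl\{[\sA'(z_*)-\sA'(\bar z)]z\bigr\}z_*+[\sA'(\bar z)z](z_*-\bar z),
\end{equation*}
each summand of which is estimated by a Lipschitz/uniform-continuity argument for $\sA,\sA'$ combined with the same multiplication inequalities.

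The main technical obstacle will be the boundary operator $\cB$. Here one must show that expressions such as $P_{\tilde d}(\nabla d)^{\sf T}\cdot\nu$, $(\tilde d\cdot\curl d)(\tilde d\times \nu)$, and $P_{\tilde d}\diver d\cdot\nu$ belong to $\mathbb{F}_\mu(J_T)$ with the right Lipschitz dependence on $\tilde d$. This requires that the \emph{trace} $\tr_{\partial\Omega}\nabla d\in \mathbb{F}_\mu(J_T)$ (which holds by the optimal trace theorem applied to $\mathbb{E}_{1,\mu}^d(J_T)$, as recorded in Section~\ref{sec:FAsett}) and that $\mathbb{F}_\mu(J_T)$ is a Banach algebra, so that products of traces of $d$ with traces of $\nabla d$ stay in $\mathbb{F}_\mu(J_T)$. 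The quantitative bound requires the pointwise bound $M$ on $\|\tr_{\partial\Omega}\nabla^j d_*\|_{\mathbb{F}_\mu}$ for $j\in\{0,1\}$ appearing in the statement: precisely this quantity multiplies the linear dependence of $\sB$ on $d$ and controls the Lipschitz constants of the Nemytskii maps $\tilde d\mapsto P_{\tilde d}$, $\tilde d\mapsto \tilde d\times\nu$, and $\tilde d\mapsto \tilde d\cdot(\cdot)$.

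Finally, the step $z(0)=0$ (and $\bar z(0)=z_*(0)$ in the last part) allows us to use the vanishing-trace subspaces ${_0}\mathbb{E}_{1,\mu}(J_T)$ and ${_0}\mathbb{F}_\mu(J_T)$, for which the embedding constants and multiplier norms can be chosen uniformly in $T\in(0,T_0]$ by the usual extension-restriction argument. This is what produces the modulus of continuity $\varepsilon(r)\to 0$ rather than a mere $O(r)$ bound, since all critical prefactors (such as $\|z\|_{C(J_T;X_{\gamma,\mu})}$) are forced to tend to $0$ as $T\to 0$ when the initial trace vanishes.
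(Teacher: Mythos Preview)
Your proposal is correct and follows essentially the same approach as the paper's own proof, which simply refers to the strategy of \cite[Proposition~B.3 and Lemma~B.2]{DuShaSi24} and singles out the two key structural ingredients you also use: the Banach algebra property of $\mathbb{F}_\mu(J_T)$ under the assumption \eqref{eq:Assum_for_p}, and the polynomial form of the boundary nonlinearities (products of $d\otimes d$ with $\phi(\nabla d)$, and $(d|\curl d)d$). Your explicit Taylor decompositions for $\cA(z_*+z)-\cA(z_*)-\cA'(z_*)z$ and for $\cA'(z_*)z-\cA'(\bar z)z$ are exactly the right way to unpack the argument, and your observations about the vanishing-trace subspaces for uniformity in $T$ are on point; you have effectively filled in the details that the paper leaves to the cited reference.
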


\begin{proof}
For the proof one may follow the strategy of \cite[Proof of Proposition B.3 and Lemma B.2]{DuShaSi24}. Indeed, by \eqref{eq:Assum_for_p}, the space $\mathbb{F}_\mu(J_T)$ is a
Banach algebra and the nonlinearities in $\mathcal{B}$ are of the form
$$
\left(d\otimes d\right)\cdot \phi(\nabla d)\quad\text{or}\quad (d|\curl d)d,$$
where
$$
\phi(\nabla d)=(\nabla d)^{\sf T}\quad\text{or}\quad \phi(\nabla d)=\diver d\cdot I=\Tr(\nabla d)\cdot I.$$
\end{proof}

For the proof of local well-posedness in the next section, we consider the following nonautonomous problem
\begin{equation}
\label{eq:nonaut1}
\left\{\begin{aligned}
\partial_t z+\cA'({z}_*(t))z -\sF'(z_*(t))z&=f&&\text{in}&&(0,T)\times\Omega ,\\
\cB'({z}_*(t))z &=g&&\text{in}&&(0,T)\times\partial\Omega ,\\
z(0)&=z_0 &&\text{in}&&\{0\}\times\Omega.
\end{aligned}\right.
\end{equation}
Here,
$$
\cA(z):=\sA(z)z\quad\text{and}\quad\cB(z):=\sB(z)z,
$$
and $z_*\in \mathbb{E}_{1,\mu}(J_T)$ is a given function. By Lemma  \ref{pro:Nonlin_Est}, $\cA,\cB$ and $\sF$ are continuously differentiable,
with
$$
\cA'(z_*)z=\sA(z_*)z+[\sA'(z_*)z]z_*,\ \cB'(z_*)z=\sB(z_*)z+[\sB'(z_*)z]z_*.
$$
\begin{proposition}\label{pro:LinAuxPrb}
Suppose that the conditions of Theorem \ref{thm:MaxRegLin} are satisfied.  Then, for any $z_*\in \mathbb{E}_{1,\mu}(J_T)$ with $z_*(t)\in \hat{X}_{\gamma,\mu}$, $t\in J_T$, and all
$$(f,g,z_0)\in \mathbb{E}_{0,\mu}(J_T)\times \mathbb{F}_{\mu}(J_T)\times X_{\gamma,\mu}$$
such that $\mathcal{B}'(z_*(0))=\tr_{t=0}g$, the linear problem \eqref{eq:nonaut1} admits a unique solution $z\in\mathbb{E}_{1,\mu}(J_T)$ satisfying the estimate \eqref{eq:MRest}.

Moreover, in case $z_0=0$, the constant $C=C(T)$ in \eqref{eq:MRest} is uniform in $T\in (0,T_*]$ for some given and fixed $T_*\in (0,\infty)$.
\end{proposition}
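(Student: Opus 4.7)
The plan is to view problem \eqref{eq:nonaut1} as a perturbation of the autonomous linear problem covered by Corollary \ref{cor:MaxRegLin2}. Using the decompositions from Lemma \ref{pro:Nonlin_Est},
$$\cA'(z_*(t))z-\sF'(z_*(t))z=\sA(z_*(t))z+\bigl([\sA'(z_*(t))z]z_*(t)-\sF'(z_*(t))z\bigr),\quad \cB'(z_*(t))z=\sB(z_*(t))z+[\sB'(z_*(t))z]z_*(t),$$
I would isolate the principal parts $(\sA(z_*(t)),\sB(z_*(t)))$ from the bracketed remainders, which act on $z$ as lower-order operators (the top $z$-derivatives appearing in them carry coefficients built from $z_*$ and its derivatives rather than from $z$).

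The first step is to reduce to homogeneous initial and boundary data. Using the trace characterization of $\mathbb{E}_{1,\mu}(J_T)$, one constructs an extension $z_1\in\mathbb{E}_{1,\mu}(J_T)$ with $z_1(0)=z_0$; the compatibility condition $\cB'(z_*(0))z_0=\tr_{t=0}g$ then forces $g-\cB'(z_*(\cdot))z_1\in {_0}\mathbb{F}_\mu(J_T)$, so after subtracting $z_1$ and absorbing the resulting terms into $f$ we may assume $z_0=0$ and $g=0$. For the remaining problem with homogeneous traces I would apply a freezing-and-partitioning argument. Since $\mathbb{E}_{1,\mu}(J_T)\hookrightarrow C(J_T;X_{\gamma,\mu})$ and $z_*(t)\in\hat X_{\gamma,\mu}$ for every $t$, the map $t\mapsto L(z_*(t))$ is uniformly continuous into $\cL({_0}\mathbb{E}_{1,\mu},\mathbb{E}_{0,\mu}\times{_0}\mathbb{F}_\mu)$ by the continuity statement established after Corollary \ref{cor:MaxRegLin2}. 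Choose a partition $0=t_0<t_1<\dots<t_N=T$ fine enough that on each subinterval $J_k=[t_{k-1},t_k]$ the variation $\sup_{t\in J_k}\|L(z_*(t))-L(z_*(t_k))\|$ is smaller than $\tfrac12\|S(z_*(t_k))\|^{-1}$; Corollary \ref{cor:MaxRegLin2} provides invertibility of $L(z_*(t_k))$, and a Neumann series yields unique solvability on $J_k$ for the principal linear problem with uniformly bounded inverse. The solutions are then glued across $t=t_k$ by feeding $z(t_k)\in X_{\gamma,\mu}$ in as the new initial value, the compatibility being automatic since the boundary condition holds on all of $J_T$.

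The lower-order perturbations $[\sA'(z_*)\cdot]z_*-\sF'(z_*)\cdot$ and $[\sB'(z_*)\cdot]z_*$ are then handled by a further Neumann series: arguing exactly as in Lemma \ref{pro:Nonlin_Est}, on a short enough subinterval they map ${_0}\mathbb{E}_{1,\mu}$ into $\mathbb{E}_{0,\mu}\times{_0}\mathbb{F}_\mu$ with norm tending to $0$ as the interval length shrinks (the output vanishes at $t=0$ and one picks up a small power of $|J_k|$ from the weighted embeddings), so after a further refinement of the partition they can be absorbed into the principal part. Iterating across the partition yields the full solution on $J_T$ together with the estimate \eqref{eq:MRest}. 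Uniformity of $C(T)$ on $T\in(0,T_*]$ in the case $z_0=0$ follows from an extension-restriction argument in the spirit of \cite[Proposition 4.1(b)]{DuShaSi24}: because the initial trace vanishes, one extends $z_*$, $f$ and $g$ to $J_{T_*}$ (preserving membership in $\hat X_{\gamma,\mu}$ by a constant-in-time continuation after $T$ and zero-extension of the data), solves on $J_{T_*}$ with constant $C(T_*)$, and restricts back.

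The main obstacle I anticipate is the partition-and-freezing step: one must verify that the mesh size needed for the Neumann series depends only on the modulus of continuity of $t\mapsto z_*(t)\in X_{\gamma,\mu}$ and on $\|S(\tilde z)\|$ for $\tilde z$ in a precompact subset of $\hat X_{\gamma,\mu}$, but not on $T$ itself. Closely related is the bookkeeping needed to glue solutions across subintervals while keeping the compatibility condition $\sB(z_*(t_k))z(t_k)=0$ exact, which is what allows the maximal-regularity constant to remain uniform when the data are restricted to, or extended from, a shorter time interval.
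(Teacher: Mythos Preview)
Your proposal is correct and follows essentially the same two-step route as the paper: first establish maximal regularity for the nonautonomous principal problem $(\partial_t+\sA(z_*(t)),\sB(z_*(t)))$ via a freezing-and-partitioning argument based on Corollary \ref{cor:MaxRegLin2} (the paper invokes \cite[Proposition 4.2]{DuShaSi24} for this), then absorb the lower-order remainders $[\sA'(z_*)\cdot]z_*$, $[\sB'(z_*)\cdot]z_*$, $\sF'(z_*)\cdot$ by a perturbation/Neumann-series argument (the paper cites \cite[Proposition 4.3]{DuShaSi24}). One small slip: subtracting $z_1$ only reduces to $z_0=0$ and $g\in{_0}\mathbb{F}_\mu(J_T)$, not $g=0$, but this is inconsequential for the argument.
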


\begin{proof}
In a first step, one proves the assertion for the nonautonomous problem
\begin{equation}
\label{eq:nonaut2}
\left\{\begin{aligned}
\partial_t z+\sA({z}_*(t))z&=f&&\text{in}&&(0,T)\times\Omega ,\\
\sB({z}_*(t))z &=g&&\text{in}&&(0,T)\times\partial\Omega ,\\
z(0)&=z_0 &&\text{in}&&\{0\}\times\Omega.
\end{aligned}\right.
\end{equation}
To this end, we follow  the strategy of the proof of Proposition 4.2 in \cite{DuShaSi24}, take into account that for any \emph{fixed} $s\in J_T$,
problem \eqref{eq:princLin2} with $\tilde{z}=z_*(s)\in \hat{X}_{\gamma,\mu}$ has the property of $L_p$-maximal regularity by Corollary \ref{cor:MaxRegLin2} and that $\mathbb{F}_\mu(J_T)$ is a Banach algebra by \eqref{eq:Assum_for_p}. The result then follows via a suitable decomposition of the interval $[0,T]$.

In a second step, we  include the terms $[\sA'(z_*)z]z_*$, $[\sB'(z_*)z]z_*$ and $\sF'(z_*)z$ by a perturbation argument as in the proof of Proposition 4.3 in \cite{DuShaSi24}. The details of this
procedure hence are omitted.
\end{proof}

\section{Proofs of the Main Results}
We start with the unique strong solvability of the abstract system \eqref{eq:iEL3}.

\begin{proposition}\label{thm:LWP}
Let  $1<p<\infty$, $\mu \in (\frac12 + \frac{5}{2p},1]$ and assume (F),(P) and (R). Then, for every $z_0\in \hat{X}_{\gamma,\mu}$ satisfying  $\sB(z_0)z_0=0$, there exists $T=T(z_0)>0$ such
that problem \eqref{eq:iEL3} has a unique solution $z\in \mathbb{E}_{1,\mu}(J_T)$.
\end{proposition}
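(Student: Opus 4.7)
The plan is to follow the standard quasilinear maximal-regularity scheme: construct a reference extension $z_*\in\mathbb{E}_{1,\mu}(J_{T_0})$ of $z_0$, substitute $z=z_*+v$ into \eqref{eq:iEL3}, and solve the resulting problem for $v$ in a small ball of $_0\mathbb{E}_{1,\mu}(J_T)$ (functions vanishing at $t=0$) via Banach's fixed-point theorem, using Proposition \ref{pro:LinAuxPrb} to invert the linearization at $z_*$ and Lemma \ref{pro:Nonlin_Est} to control the nonlinear residuals.

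For the reference function I would invoke Proposition \ref{pro:LinAuxPrb} with the time-frozen coefficient $z_*(t)\equiv z_0$ on some short interval $J_{T_0}$, applied to the tailored data
\begin{equation*}
\partial_t z_*+\cA'(z_0)z_*-\sF'(z_0)z_*=\cA'(z_0)z_0-\sF'(z_0)z_0+\sF(z_0)-\cA(z_0),\ \ \cB'(z_0)z_*=\cB'(z_0)z_0,\ \ z_*(0)=z_0.
\end{equation*}
The embedding $X_{\gamma,\mu}\hookrightarrow C^1(\overline{\Omega};\R^3)\times C^2(\overline{\Omega};\R^3)$ implied by \eqref{eq:Assum_for_p} ensures that the time-constant right-hand sides lie in $\mathbb{E}_{0,\mu}(J_{T_0})$ and $\mathbb{F}_\mu(J_{T_0})$, while compatibility at $t=0$ is tautological since $z_*(0)=z_0$. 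The specific choice of forcing is engineered so that evaluating the equation at $t=0$ yields $\partial_t z_*(0)=\sF(z_0)-\cA(z_0)$, whence the residual $-\partial_t z_*+\sF(z_*)-\cA(z_*)$ vanishes at $t=0$.

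Writing $z=z_*+v$ with $v(0)=0$ turns \eqref{eq:iEL3} into the fixed-point equation
\begin{equation*}
\partial_t v+\cA'(z_*)v-\sF'(z_*)v=N_1(v),\quad \cB'(z_*)v=N_2(v),\quad v(0)=0,
\end{equation*}
where $N_1(v)$ collects the second-order remainders $\sF(z_*+v)-\sF(z_*)-\sF'(z_*)v-[\cA(z_*+v)-\cA(z_*)-\cA'(z_*)v]$ plus the residual $-\partial_t z_*+\sF(z_*)-\cA(z_*)$, and $N_2(v):=-\cB(z_*)-[\cB(z_*+v)-\cB(z_*)-\cB'(z_*)v]$. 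The hypothesis $\sB(z_0)z_0=\cB(z_0)=0$ combined with the construction of $z_*$ yields $N_1(0)|_{t=0}=0$ and, crucially, $N_2(v)|_{t=0}=0$ for every admissible $v$, which is precisely the compatibility required by Proposition \ref{pro:LinAuxPrb} to define the map $\Phi\colon v\mapsto w$, with $w\in{_0}\mathbb{E}_{1,\mu}(J_T)$ the unique solution of the linear problem with data $(N_1(v),N_2(v),0)$.

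Finally, Lemma \ref{pro:Nonlin_Est} supplies estimates of the form $\|N_j(v)-N_j(\bar v)\|\le\varepsilon(r)\|v-\bar v\|_{\mathbb{E}_{1,\mu}(J_T)}$ for $v,\bar v$ in the ball $B_r:=\{v\in{_0}\mathbb{E}_{1,\mu}(J_T)\colon\|v\|_{\mathbb{E}_{1,\mu}(J_T)}\le r\}$, with $\varepsilon(r)\to 0$ as $r\to 0$, while absolute continuity of the integral in $t$ makes the $\mathbb{E}_{0,\mu}(J_T)$- and $\mathbb{F}_\mu(J_T)$-norms of the residual contributions tend to $0$ as $T\downarrow 0$. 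Combined with the $T$-uniform bound on the solution operator from Proposition \ref{pro:LinAuxPrb} (valid precisely because the initial value for $v$ is zero), this delivers a contraction $\Phi\colon B_r\to B_r$ for $r,T$ sufficiently small, and Banach's fixed-point theorem produces the unique fixed point $v$ and hence the local solution $z=z_*+v\in\mathbb{E}_{1,\mu}(J_T)$. The principal obstacle is guaranteeing the time-trace compatibility of the iterate's boundary data at every step, which is exactly what the hypothesis $\sB(z_0)z_0=0$ together with the engineered choice of $z_*$ supply; uniqueness on a possibly smaller subinterval follows by applying the same contraction argument to any two candidate solutions.
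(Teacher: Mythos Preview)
Your overall scheme---construct a reference $z_*$ with $z_*(0)=z_0$, linearize at $z_*$, and run a contraction argument on the correction $v$---matches the paper's strategy, but there is a genuine gap in your construction of $z_*$.

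You build $z_*$ by solving a linear problem via Proposition~\ref{pro:LinAuxPrb} with constant coefficient $z_0\in\hat{X}_{\gamma,\mu}$. That step is fine. The problem is the next step: in your fixed-point iteration you invoke Proposition~\ref{pro:LinAuxPrb} again, now with the \emph{time-dependent} coefficient $z_*$, to invert $\partial_t+\cA'(z_*)-\sF'(z_*)$ subject to $\cB'(z_*)$. But Proposition~\ref{pro:LinAuxPrb} requires $z_*(t)\in\hat{X}_{\gamma,\mu}$ for every $t$, i.e.\ $|d_*(t,\cdot)|_2\equiv 1$, because the maximal regularity (Corollary~\ref{cor:MaxRegLin2}) ultimately rests on the strong ellipticity and Lopatinskii--Shapiro arguments of Propositions~\ref{prop:elliptic} and \ref{pro:LopShap_d}, which are proved only under $|d|_2=1$. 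A solution of your linear reference problem has no reason to preserve unit length of its $d$-component, so you are not entitled to apply Proposition~\ref{pro:LinAuxPrb} at this $z_*$. Your appeal to Lemma~\ref{pro:Nonlin_Est} confirms that you really do mean to linearize at the time-dependent $z_*$ (the Taylor remainders there are centered at $z_*$), so the issue cannot be dismissed as a notational slip.

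The paper resolves this by constructing $z_*=(u_*,d_*)$ directly: $u_*$ from a Stokes problem, and $d_*$ by solving a heat equation for an extension of $d_0$, restricting to $\Omega$, and then \emph{normalizing} $d_*:=\hat d/|\hat d|_2$ (which stays well-defined on a short interval by continuity). This guarantees $|d_*(t,\cdot)|_2\equiv 1$, so $z_*(t)\in\hat{X}_{\gamma,\mu}$ and Proposition~\ref{pro:LinAuxPrb} is applicable throughout the iteration. With this modification the remainder of your argument goes through and is essentially equivalent to the paper's (the paper introduces an auxiliary $w$ and iterates in a ball around $w$ rather than writing $z=z_*+v$, but that is only a cosmetic difference).
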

\begin{proof}
We claim first that there exists $T_0>0$ and $z_*\in \mathbb{E}_{1,\mu}(J_{T_0})$ satisfying  $z_*(t)\in \hat{X}_{\gamma,\mu}$ for all $t\in J_{T_0}$ and such that $z_*(0)=z_0$. Indeed, for given
$$u_0\in \{u\in W_{p,\sigma}^{2\mu-2/p}(\Omega;\RR^3)\mid u=0\ \text{on}\ \partial\Omega\}$$
we obtain from \cite[Theorem 7.3.1]{PrSi16} a function
$$u_*\in H_{p,\mu}^1(\R_+;L_{p,\sigma}(\Omega;\RR^3))\cap L_{p,\mu}(\R_+;H_{p,\sigma}^2(\Omega;\RR^3))$$
such that $u_*(0)=u_0$. Concerning $d_0\in W_{p}^{2\mu+1-2/p}(\Omega;\RR^3)$, we first extend $d_0$ to some $\tilde{d}_0\in W_{p}^{2\mu+1-2/p}(\RR^3;\RR^3)$ and solve the full space problem
\begin{equation}
\left\{\begin{aligned}
\partial_t \tilde{d}+\omega\tilde{d}-\Delta\tilde{d}&=0&&\text{in}&&\RR_+\times\RR^3 ,\\
\tilde{d}(0)&=\tilde{d}_0 &&\text{in}&&\{0\}\times\RR^3,
\end{aligned}\right.
\end{equation}
for some $\omega>0$, to obtain a unique solution
$$
\tilde{d}\in H_{p,\mu}^1(\R_+;H_{p}^1(\RR^3;\RR^3))\cap L_{p,\mu}(\R_+;H_{p}^3(\RR^3;\RR^3)),
$$
by \cite[Theorem 6.1.11 \& Section 6.1.5 (i)]{PrSi16}. The restricted function $\hat{d}:=\tilde{d}|_{\Omega}$ then satisfies
$$
\hat{d}\in H_{p,\mu}^1(\R_+;H_{p}^1(\Omega;\RR^3))\cap L_{p,\mu}(\R_+;H_{p}^3(\Omega;\RR^3))
$$
and $\hat{d}(0)=d_0$. Since $z_0=(u_0,d_0)\in\hat{X}_{\gamma,\mu}$, we know that $|d_0|_2=1$.
Let us recall that
\begin{equation}\label{eq:emb_hat_d}
\hat{d}\in C(\R_+;C^2(\overline{\Omega};\RR^3))
\end{equation}
by our assumption \eqref{eq:Assum_for_p} on $p$ and $\mu$.  Therefore, by continuity and compactness, there exist $T_0>0$ and $\alpha,\beta>0$ such that
$$0<\alpha\le |\hat{d}(t,x)|_2\le\beta<\infty$$
for all $(t,x)\in [0,T_0]\times \overline{\Omega}$. This estimate and \eqref{eq:emb_hat_d} in turn imply that the function
$$d_*(t,x):=\frac{1}{|\hat{d}(t,x)|_2}\hat{d}(t,x)$$
satisfies
$$d_*\in H_{p,\mu}^1(J_{T_0};H_{p}^1(\Omega;\RR^3))\cap L_{p,\mu}(J_{T_0};H_{p}^3(\Omega;\RR^3))$$
with $|d_*(t,x)|_2=1$ for all $(t,x)\in J_{T_0}\times\overline{\Omega}$ and $d_*(0)=\frac{1}{|d_0|_2}d_0=d_0$. Hence, $z_*:=(u_*,d_*)$ has the desired properties.

For the remainder of the proof, we define
$$
\sA_*(t)z:=\cA'({z}_*(t))z -\sF'(z_*(t)),\quad \sB_*(t)z:=\cB'({z}_*(t))z
$$
and solve in a first step the linear problem
\begin{equation}
\label{eq:LWP1}
\left\{\begin{aligned}
\partial_t z+\sA_*(t)z&=\cA'(z_*)z_*-\cA(z_*)+\sF(z_*)-\sF'(z_*)z_*&&\text{in}&&(0,T_0)\times\Omega ,\\
\sB_*(t)z &=\cB'(z_*)z_*-\cB(z_*)&&\text{in}&&(0,T_0)\times\partial\Omega ,\\
z(0)&=z_0 &&\text{in}&&\{0\}\times\Omega,
\end{aligned}\right.
\end{equation}
by Proposition \ref{pro:LinAuxPrb} to obtain a unique solution $w\in\mathbb{E}_{1,\mu}(J_{T_0})$ of \eqref{eq:LWP1}. We note that the compatibility condition
$$\sB_*(0)z_0 =\cB'(z_0)z_0-\cB(z_0)$$
at $t=0$ is satisfied, since $\cB(z_0)=\sB(z_0)z_0$ by definition of $\cB$ and $\sB(z_0)z_0=0$ by assumption.

Given any $R_0>0$, we define for $(T,R)\in (0,T_0]\times (0,R_0]$ the set
$$\Sigma(T,R):=\{z\in \mathbb{E}_{1,\mu}(J_{T})\mid\|z-w\|_{\mathbb{E}_{1,\mu}(J_{T})}\le R,\ \tr_{t=0}z=z_0\},$$
which is a closed subset of  $\mathbb{E}_{1,\mu}(J_{T})$.
For a given $\hat{z}\in \Sigma(T,R)$, we solve the linear problem
\begin{equation}
\label{eq:LWP2}
\left\{\begin{aligned}
\partial_t z+\sA_*(t)z&=\cA'(z_*)\hat{z}-\cA(\hat{z})+\sF(\hat{z})-\sF'(z_*)\hat{z}&&\text{in}&&(0,T)\times\Omega ,\\
\sB_*(t)z &=\cB'(z_*)\hat{z}-\cB(\hat{z})&&\text{in}&&(0,T)\times\partial\Omega ,\\
z(0)&=z_0 &&\text{in}&&\{0\}\times\Omega,
\end{aligned}\right.
\end{equation}
to obtain a unique solution $z=\cT(\hat{z})\in \mathbb{E}_{1,\mu}(J_{T})$ by Proposition \ref{pro:LinAuxPrb}. We note that the compatibility condition $\sB_*(0)z_0 =\cB'(z_0)z_0-\cB(z_0)$ at $t=0$ is again satisfied.

We see  that $z\in \Sigma(T,R)$ solves \eqref{eq:iEL3} if and only if $z$ is a fixed point of $\cT$, i.e. $\cT(z)=z$. For the latter  purpose, we will employ the contraction principle.
It follows from \eqref{eq:MRest}, \eqref{eq:LWP1} and \eqref{eq:LWP2} that there exists a constant $C>0$, not depending on $T\in (0,T_0]$, such that
\begin{align*}
\|\cT(\hat{z})-w\|_{\mathbb{E}_{1,\mu}(J_{T})}\le &C(\|\cA(\hat{z})-\cA(z_*)-\cA'(z_*)(\hat{z}-z_*)\|_{\mathbb{E}_{0,\mu}(J_{T})}
+\|\sF(\hat{z})-\sF(z_*)-\sF'(z_*)(\hat{z}-z_*)\|_{\mathbb{E}_{0,\mu}(J_{T})}\\
&\quad +\|\cB(\hat{z})-\cB(z_*)-\cB'(z_*)(\hat{z}-z_*)\|_{\mathbb{F}_{\mu}(J_{T})}),
\end{align*}
since $\tr_{t=0}(\cT(\hat{z})-w)=0$.
Observing that
$$
\|\hat{z}-z_*\|_{\mathbb{E}_{1,\mu}(J_{T})}\le \|\hat{z}-w\|_{\mathbb{E}_{1,\mu}(J_{T})}+\|w-z_*\|_{\mathbb{E}_{1,\mu}(J_{T})}\le R+\|w-z_*\|_{\mathbb{E}_{1,\mu}(J_{T})},
$$
Lemma \ref{pro:Nonlin_Est} yields the estimate
\begin{align*}
\|\cT(\hat{z})-w\|_{\mathbb{E}_{1,\mu}(J_{T})}&\le\varepsilon(\|\hat{z}-z_*\|_{\mathbb{E}_{1,\mu}(J_{T})})\|\hat{z}-z_*\|_{\mathbb{E}_{1,\mu}(J_{T})}\\
&\le \varepsilon(R+\|w-z_*\|_{\mathbb{E}_{1,\mu}(J_{T})})(R+\|w-z_*\|_{\mathbb{E}_{1,\mu}(J_{T})}) \le R,
\end{align*}
provided $T,R$ are chosen sufficiently small. Here we used the fact that $w,z_*\in \mathbb{E}_{1,\mu}(J_{T_0})$ are fixed functions and
$$\|w-z_*\|_{\mathbb{E}_{1,\mu}(J_{T})}\to 0$$
as $T\to 0$. Furthermore, for $\hat{z},\bar{z}\in\Sigma(T,R)$, we obtain
\begin{align*}
\|\cT(\hat{z})-\cT(\bar{z})\|_{\mathbb{E}_{1,\mu}(J_{T})}\le &C (\|\cA(\hat{z})-\cA(\bar{z})-\cA'(z_*)(\hat{z}-\bar{z})\|_{\mathbb{E}_{0,\mu}(J_{T})}\\
&+\|\sF(\hat{z})-\sF(\bar{z})-\sF'(z_*)(\hat{z}-\bar{z})\|_{\mathbb{E}_{0,\mu}(J_{T})}\\
&+\|\cB(\hat{z})-\cB(\bar{z})-\cB'(z_*)(\hat{z}-\bar{z})\|_{\mathbb{F}_{\mu}(J_{T})}).
\end{align*}
Estimating
\begin{align*}
\|\cA(\hat{z})-\cA(\bar{z})-\cA'(z_*)(\hat{z}-\bar{z})\|_{\mathbb{E}_{0,\mu}(J_{T})} &\le
\|\cA(\hat{z})-\cA(\bar{z})-\cA'(\hat{z})(\hat{z}-\bar{z})\|_{\mathbb{E}_{0,\mu}(J_{T})}\\
& \quad +\|(\cA'(z_*)-\cA'(\hat{z}))(\hat{z}-\bar{z})\|_{\mathbb{E}_{0,\mu}(J_{T})}
\end{align*}
and applying again Lemma \ref{pro:Nonlin_Est} we verify that
$$
\|\cA(\hat{z})-\cA(\bar{z})-\cA'(\hat{z})(\hat{z}-\bar{z})\|_{\mathbb{E}_{0,\mu}(J_{T})}
\le \varepsilon(\|\hat{z}-\bar{z}\|_{\mathbb{E}_{1,\mu}(J_{T})})\|\hat{z}-\bar{z}\|_{\mathbb{E}_{1,\mu}(J_{T})}
$$
and
$$
\|(\cA'(z_*)-\cA'(\hat{z}))(\hat{z}-\bar{z})\|_{\mathbb{E}_{0,\mu}(J_{T})}\le
\varepsilon(\|z_*-\hat{z}\|_{\mathbb{E}_{1,\mu}(J_T)})\|\hat{z}-\bar{z}\|_{\mathbb{E}_{1,\mu}(J_T)}.
$$
Here we used the fact that there exists a constant $M>0$ such that for all $(T,R)\in (0,T_0]\times (0,R_0]$ and for every $\hat{z}=(\hat{u},\hat{d})\in \Sigma(T,R)$ we have
$$
\|\tr_{\partial\Omega}\nabla^j\hat{d}\|_{\mathbb{F}_{\mu}(J_T)},\ \|\hat{z}\|_{C(J_T;X_{\gamma,\mu})},\ \|\hat{z}\|_{\mathbb{E}_{1,\mu}(J_T)}\le M,\quad j\in\{0,1\}.
$$
We further see that
$$\|\hat{z}-\bar{z}\|_{\mathbb{E}_{1,\mu}(J_{T})}\le \|\hat{z}-w\|_{\mathbb{E}_{1,\mu}(J_{T})}+\|w-\bar{z}\|_{\mathbb{E}_{1,\mu}(J_{T})}\le 2R$$
and
\begin{align*}
\|z_*-\hat{z}\|_{\mathbb{E}_{1,\mu}(J_T)}\le \|z_*-w\|_{\mathbb{E}_{1,\mu}(J_T)}+\|w-\hat{z}\|_{\mathbb{E}_{1,\mu}(J_T)}
\le \|z_*-w\|_{\mathbb{E}_{1,\mu}(J_T)}+R .
\end{align*}
Analogous estimates for the terms involving $\sF$ and $\cB$ finally imply
$$
\|\cT(\hat{z})-\cT(\bar{z})\|_{\mathbb{E}_{1,\mu}(J_{T})}\le\frac{1}{2}\|\hat{z}-\bar{z}\|_{\mathbb{E}_{1,\mu}(J_T)},
$$
provided $T$ and $R$ are chosen sufficiently small. Hence, $\cT:\Sigma(T,R)\to\Sigma(T,R)$ is a contraction and we obtain the existence and uniqueness of a fixed point
$z\in\Sigma(T,R)$ of $\cT$, which is then the unique solution of \eqref{eq:iEL3}.
\end{proof}

It follows from the definition of the Helmholtz projection $\mathbb{P}_H$ that any solution to \eqref{eq:iEL3} is a solution of \eqref{eq:elgenincom} or \eqref{eq:iEL2}. Indeed,
given $v\in L_p(\Omega;\RR^3)$, we have  $\mathbb{P}_Hv=v-\nabla\pi$, where $\pi\in \dot{H}_p^1(\Omega)$ solves the weak Neumann problem
$$(\nabla\pi|\nabla\phi)_{L_2(\Omega)}=(v|\nabla\phi)_{L_2(\Omega)},\quad \phi\in \dot{H}_{p'}^1(\Omega),$$
with $p'=p/(p-1)$. Hence, setting $v:=-\rho u\cdot\nabla u+\diver S$, it follows that
$$
\rho \partial_t u-v+\nabla\pi=\rho\partial_t u-\mathbb{P}_H v=0.
$$
Vice versa, any solution of \eqref{eq:elgenincom} or \eqref{eq:iEL2} is also a solution of \eqref{eq:iEL3}. This follows by applying  the Helmholtz projection to the first equation of
\eqref{eq:elgenincom} or \eqref{eq:iEL2}.


\begin{proof}[Proof of Theorem \ref{thm:LWP2}]
Note first that existence and regularity of a unique solution for \eqref{eq:elgenincom}  follows directly from Proposition \ref{thm:LWP}.

Concerning the property $|d(t,x)|_2=1$, we recall the differential equation for $d$
\begin{equation}\label{eq:d_LWP}
\gamma\partial_td+\gamma u\cdot \nabla d = P_d({\rm div}(\rho\frac{\partial\psi}{\partial\nabla d})-\rho\nabla_d\psi) {+\mu_V Vd + \mu_D P_d Dd}.
\end{equation}
Let us further recall from Section \ref{sec:compEop} that
\begin{align*}
  P_d\diver\left(\frac{\partial{\psi}}{\partial(\nabla d)}\right)&=2k_3(\Delta d+|\nabla d|_2^2d)+2(k_1-k_3)P_d\nabla\diver d + 2(k_2-k_3)\left((d\times\nabla)\otimes \curl d\right)\cdot  d\\
  &\quad +2(k_2-k_3)\left((d\times\nabla)\otimes d\right)\cdot \curl d
  -2(k_2-k_3)(d\cdot \curl d)P_d\curl d.
\end{align*}
Our goal is to derive an initial boundary value problem for the function $\varphi:=|d|_2^2-1$ by testing \eqref{eq:d_LWP} and the boundary condition for $d$ with $d$ and to show that $\varphi = 0$.
To this end, we observe that
$(\Delta d|d)=\frac{1}{2}\Delta\varphi-|\nabla d|_2^2$ and hence
$$
(\Delta d+|\nabla d|_2^2d|d)=\frac{1}{2}\Delta\varphi+|\nabla d|_2^2\varphi.
$$
Furthermore,
$(P_d\nabla\diver d|d)=(\nabla\diver d|P_dd)=-(\nabla\diver d|d)\varphi$ and
$$(d\cdot \curl d)(P_d\curl d|d)=(d\cdot \curl d)(\curl d|P_dd)=-(d\cdot \curl d)(\curl d|d)\varphi,$$
since $P_d^{\sf T}=P_d$ and $P_dd=-\varphi d$. Therefore, by \eqref{eq:perp_d}, we obtain
\begin{align*}
  \Big(P_d\diver\Big(\frac{\partial{\psi}}{\partial(\nabla d)}\Big)\Big| d\Big)&=k_3\Delta \varphi+2k_3|\nabla d|_2^2\varphi-2(k_1-k_3)(\nabla\diver d|d)\varphi\\
  &\quad + 2(k_2-k_3)(d\cdot \curl d)(\curl d|d)\varphi.
\end{align*}
For the remaining terms on the right side in \eqref{eq:d_LWP} we have
$$
(P_d\nabla_d\psi|d)=-(\nabla_d\psi|d)\varphi,\quad (P_dDd|d)=-(Dd|d)\varphi \mbox{ and }(Vd|d)=0,
$$
since $V$ is skew-symmetric. The terms on the left side of \eqref{eq:d_LWP} are treated as
$$
(\partial_td|d)=\frac{1}{2}\partial_t\varphi\quad\text{and}\quad (u\cdot\nabla d|d)=\frac{1}{2}(u|\nabla\varphi).
$$
Finally, we consider the contributions by  the  boundary condition for $\varphi$ by $d$. Let us recall from \eqref{eq:BC1} that
\begin{align*}
0&=P_d\frac{\partial\psi}{\partial(\nabla d)}\cdot\nu
=k_3\nabla d\cdot \nu+P_{{d}}(k_1\diver d\cdot I-k_3(\nabla d)^{\sf T})\cdot \nu\\
& \quad +(k_2-k_3)({d}\cdot \curl d)({d}\times\nu) +(k_2+k_4)P_{{d}}((\nabla d)^{\sf T}-\diver d\cdot I)\cdot \nu,
\end{align*}
which, multiplying  by $d$, yields
\begin{align*}
0=\frac{1}{2} k_3\partial_\nu\varphi-(k_1\diver d\cdot \nu-k_3(\nabla d)^{\sf T}\cdot \nu|d)\varphi
-(k_2+k_4)((\nabla d)^{\sf T}\cdot\nu-\diver d\cdot \nu|d)\varphi.
\end{align*}
Here we used again the identity $P_d d=-\varphi d$. Thus, we obtain the following problem for $\varphi$:
\begin{equation}
\label{eq:Prbl_varphi}
\left\{\begin{aligned}
\gamma\partial_t \varphi&=2k_3\Delta\varphi+G_1\varphi&&\text{in}&&\Omega ,\\
k_3\partial_\nu\varphi&=G_0\varphi
&&\text{on}&&\partial\Omega ,\\
\varphi(0)&=0 &&\text{in}&&\Omega,
\end{aligned}\right.
\end{equation}
where
\begin{align*}
G_1\varphi&:=2k_3|\nabla d|_2^2\varphi+\rho(\nabla_d\psi|d)\varphi-\mu_D(Dd|d)\varphi-\gamma(u|\nabla\varphi)\\
&\quad -2(k_1-k_3)(\nabla\diver d|d)\varphi +2(k_2-k_3)(d\cdot \curl d)(\curl d|d)\varphi,
\end{align*}
and
$$
G_0\varphi:=(k_2+k_4-k_3)((\nabla d)^{\sf T}\cdot\nu|d)\varphi-(k_2+k_4-k_1)\diver d (\nu|d)\varphi.
$$
Our aim is now to show  that $\varphi=0$ is the unique solution to \eqref{eq:Prbl_varphi}. To this end, we observe that if
$$d\in H_{p,\mu}^1(J_T;H_p^1(\Omega;\RR^3))\cap L_{p,\mu}(J_T;H_p^3(\Omega;\RR^3)),$$
then
$$
\varphi\in H_{p,\mu}^1(J_T;L_p(\Omega))\cap L_{p,\mu}(J_T;H_p^2(\Omega))=:\mathbb{E}_{1,\mu}^\varphi(J_T),
$$
since $d\in C(J_T;C^2(\overline{\Omega}))$ by our assumption \eqref{eq:Assum_for_p} on $p$ and $\mu$. Furthermore, we have $u\in C(J_T;C^1(\overline{\Omega}))$.
Hence, there exists a constant $C_1=C_1(T)>0$ such that for any $\tau\in (0,T]$ it holds that
\begin{align*}
\|G_1\varphi\|_{L_p(J_\tau\times\Omega)}&\le C_1\|\varphi\|_{L_p(J_\tau;H_p^1(\Omega))}
\le \tau^{1/p} C_1 \|\varphi\|_{L_\infty(J_\tau;H_p^1(\Omega))}
\le \tau^{1/p} C_1M_1\|\varphi\|_{\mathbb{E}_{1,\mu}^\varphi(J_\tau)},
\end{align*}
with some constant $M_1>0$, not depending  on $\tau$, since $\varphi(0)=0$ and
$$
\mathbb{E}_{1,\mu}^\varphi(J_\tau)\hookrightarrow C([0,\tau];W_p^{2\mu-2/p}(\Omega))\hookrightarrow C([0,\tau];H_p^{1}(\Omega)),
$$
by the assumption \eqref{eq:Assum_for_p} on $p$ and $\mu$. Furthermore, the trace space for $\partial_\nu\varphi$ on $\partial\Omega$ is
$$
\mathbb{G}_{\mu}(J_T):=W_{p,\mu}^{1/2-1/2p}(J_T;L_p(\partial\Omega))\cap L_{p,\mu}(J_T;W_{p}^{1-1/p}(\partial\Omega)).
$$
It follows from \cite[Lemma A.5 (ii)]{DuShaSi24} that there exists a constant $C_2=C_2(T)>0$,  such that for any $\tau\in (0,T]$, the estimate
\begin{align*}
  \|G_0\varphi\|_{\mathbb{G}_{\mu}(J_\tau)} \le C_2\|\beta\|_{\mathbb{G}_{\mu}(J_\tau)}\|\varphi\|_{\mathbb{F}_{\mu}^\varphi(J_\tau)}
  \le C_2M_2\|\beta\|_{\mathbb{G}_{\mu}(J_\tau)}\|\varphi\|_{\mathbb{E}_{1,\mu}^\varphi(J_\tau)},
\end{align*}
is valid, with some constant $M_2>0$, being independent of $\tau$, since $\varphi(0)=0$. Here we have used the fact that
$$
\mathbb{F}_\mu^\varphi(J_T)=\tr_{\partial\Omega}\mathbb{E}_{1,\mu}^\varphi(J_T)=W_{p,\mu}^{1-1/2p}(J_T;L_p(\partial\Omega))\cap L_{p,\mu}(J_T;W_{p}^{2-1/p}(\partial\Omega)),
$$
for any $T\in (0,\infty]$. Finally, let us note that $\|\beta\|_{\mathbb{G}_{\mu}(J_\tau)}\to 0$ as $\tau\to 0$, by the absolute continuity of the integral.

Since by assumptions (R) \& (P), $\gamma=\gamma(d,\nabla d)\in C(J_T\times \overline{\Omega})$ and $\gamma>0$, maximal $L_p$-regularity for the Neumann-Laplacian with inhomogeneous boundary conditions and a perturbation argument
imply that $\|\varphi\|_{\mathbb{E}_{1,\mu}^\varphi(J_\tau)}=0$ provided $\tau>0$ is sufficiently small. This in turn implies $\varphi(t)=0$ in its trace space
$$
X_{\gamma,\mu}^\varphi:=W_p^{2\mu-2/p}(\Omega)
$$
for all $t\in [0,\tau]$. We define
$$
\tau_*:=\sup\{\tau\in [0,T]\mid \forall\ t\in[0,\tau]:\varphi(t)=0\ \text{in}\ X_{\gamma,\mu}^\varphi\}.
$$
Suppose that $\tau_*<T$. Then we may solve \eqref{eq:Prbl_varphi} with initial time $\tau_*$ and initial value $\varphi(\tau_*)=0$. Applying again the maximal $L_p$-regularity for the
Neumann-Laplacian and the above perturbation argument, we obtain the existence of some $\tau>0$ such that $\varphi(t)=0$ in $X_{\gamma,\mu}^\varphi$ for all
$t\in [\tau_*,\tau_*+\tau]$, which is a contradiction to the maximality of $\tau_*$. This yields $\varphi(t)=0$ in $X_{\gamma,\mu}^\varphi$ for all $t\in [0,T]$.  Hence,
$\varphi(t,x)=0$ for all $(t,x)\in [0,T]\times\overline{\Omega}$, by the embedding $X_{\gamma,\mu}^\varphi\hookrightarrow C^1(\overline{\Omega})$ and \eqref{eq:Assum_for_p}. Note that $\varphi\equiv0$ if and only if $|d|_2\equiv 1$.

Finally, the fact that the solution $(u,d)$ of \eqref{eq:elgenincom} can be extended to a maximal interval of existence follows by  an iterated application of Proposition \ref{thm:LWP}.
\end{proof}

\begin{proof}[Proof of Theorem \ref{thm:contdep}]
For the proof, one may literally follow the lines of the proof of \cite[Proof of Theorem 5.1 (b)]{DuShaSi24}. We consider the equivalent system \eqref{eq:iEL3} with the definitions \eqref{eq:Def_sA}-\eqref{eq:Def_sF}. With a view on  \cite[Proof of Theorem 5.1 (b)]{DuShaSi24}, we need to know that
\begin{equation}\label{eq:contDep1}
[z\mapsto \mathcal{B}(z)=\sB(z)z]\in C^1(X_{\gamma,\mu},Y_{\gamma,\mu})
\end{equation}
and that for any $z_0\in \mathcal{M}$,
$$\mathcal{B}'(z_0)\in\mathcal{L}(X_{\gamma,\mu},Y_{\gamma,\mu}),$$
has a continuous right inverse, where
$$Y_{\gamma,\mu}:=\tr_{t=0}\mathbb{F}_{\mu}(J_T)=W_{p}^{2\mu-3/p}(\partial\Omega;\RR^3)$$
is the trace space of $\mathbb{F}_{\mu}(J_T)$ at $t=0$.

Adopting the strategy of the proof of \cite[Lemma B.3 (a)]{DuShaSi24}, \eqref{eq:contDep1} follows readily. The existence of a continuous right inverse for $\mathcal{B}'(z_0)$ can be
proven as in \cite[Proof of Proposition 2.5.1]{Mey10} in combination with Propositions \ref{prop:elliptic}, \ref{pro:LopShap_d} and \cite[Section 6.3.5 (iv)]{PrSi16}.
\end{proof}

\noindent
{\bf Acknowledgements}. Matthias Hieber acknowledges the support by the DFG Project FOR 5528.

\medskip

\noindent
\textbf{Data Availability Statement} Data sharing not applicable to this article as no
datasets were generated or analysed during the current study.

\medskip

\noindent
\textbf{Declarations}

\medskip

\noindent
\textbf{Conflict of interest} The authors have no conflict of interest to declare that are
relevant to the content of this article.


\end{document}